\title{\bf On the existence of extreme waves and the Stokes conjecture with vorticity }
\author{Eugen Varvaruca}\date{}
\newtheorem{theorem}{Theorem}[section]
\newtheorem{lemma}[theorem]{Lemma}
\newtheorem{proposition}[theorem]{Proposition}
\newtheorem{remark}[theorem]{Remark}
\numberwithin{equation}{section}
\newcommand{\hg}{\hat\Gamma}
\newcommand{\Om}{\Omega}
\newcommand{\om}{\omega}
\newcommand{\la}{\lambda}
\newcommand{\te}{\theta}
\newcommand{\vfi}{\varphi}
\newcommand{\be}{\begin{equation}}
\newcommand{\ee}{\end{equation}}
\newcommand{\ds}{\displaystyle}
\newcommand{\mcs}{{\mathcal S}}
\newcommand{\mcu}{{\mathcal U}}
\newcommand{\mcb}{{\mathcal B}}
\newcommand{\mcd}{{\mathcal D}}
\newcommand{\mcg}{{\mathcal G}}
\newcommand{\mcl}{{\mathcal L}}
\newcommand{\mcr}{\mathcal{R}}
\newcommand{\mch}{{\mathcal H}}
\newcommand{\mck}{{\mathcal K}}
\newcommand{\Lip}{{\rm Lip}}
\newcommand{\indj}{_{j\geq 1}}
\newcommand{\loc}{{\textnormal{loc}}}
\newcommand{\tips}{\tilde\psi}
\newcommand{\wido}{\widetilde\Om}
\newcommand{\wids}{\widetilde\mcs}
\newcommand{\ovo}{\overline\Om}
\newcommand{\ha}{H_{\mathbb{C}}}
\newcommand{\h}{h_{\mathbb{C}}}
\newcommand{\psix}{\psi_{_{X}}}
\newcommand{\psiy}{\psi_{_{Y}}}
\newcommand{\bdc}{\mathbb{C}}
\newcommand{\bdr}{\mathbb{R}}
\newcommand{\mcc}{\mathcal{C}}
\newcommand{\veps}{\varepsilon}
\newcommand{\indn}{_{n\geq 1}}
\newcommand{\bese}{\begin{subequations}}
\newcommand{\ese}{\end{subequations}}
\newcommand{\non}{\nonumber}
\renewcommand{\Im}{\textnormal{Im}\,}
\begin{document}
\maketitle \centerline{Department of Mathematical Sciences,
University of Bath} \centerline{Claverton Down, Bath BA2 7AY, United
Kingdom}\centerline{Email address: {\tt mapev@maths.bath.ac.uk}}
\centerline{}
\begin{abstract} This is a study of singular solutions of the problem of traveling
gravity water waves on flows with vorticity. We show that, for a
certain class of vorticity functions, a sequence of regular waves
converges to an extreme wave with stagnation points at its crests.
We also show that, for any vorticity function, the profile of
 an extreme wave must have
either a corner of $120^\circ$ or a horizontal tangent at any
stagnation point about which it is supposed symmetric. Moreover, the
profile necessarily has a corner of $120^\circ$ if the vorticity is
nonnegative near the free surface.
\end{abstract}

\section{Introduction}

This article addresses the classical hydrodynamical problem
concerning traveling two-dimensional gravity water waves with
vorticity. There has been considerable interest on this problem in
recent years, starting with the systematic study of Constantin and
Strauss \cite{CoS}.

When the water depth is finite, which is the setting of \cite{CoS},
the problem arises from the following physical situation. A wave of
permanent form moves with constant speed on the surface of an
incompressible, inviscid, heavy fluid, the bottom of the fluid
domain being horizontal. With respect to a frame of reference moving
with the speed of the wave, the flow is steady and occupies a fixed
region $\Om$ in $(X,Y)$-plane, which lies above a horizontal line
$\mcb_F:=\{(X,F):X\in\bdr\}$, where $F$ is a constant, and below
some a priori unknown free surface
$\mcs:=\{(u(s),v(s)):s\in\mathbb{R}\}$.
 Since the fluid is incompressible, the
flow can be described by a \emph{stream function} $\psi$
 which satisfies the following equations and boundary conditions:
 \begin{subequations}\label{apb}
\begin{align}
& \Delta\psi=-\gamma(\psi)\quad\text{in
}\Om,\label{ap0}\\&0\leq\psi\leq B\quad\text{in }\Om,\label{ap*}\\ &
\psi=B\quad \text{on }\mcb_F,\label{ap1}\\&\psi=0\quad \text{on
}\mcs, \label{ap2}\\&\vert\nabla\psi\vert^{2}+2gY=Q \quad\text{on }
\mcs\label{ap3},
 \end{align}
\end{subequations}
where $Q$ is a constant, $B$, $g$ are positive constants and
$\gamma:[0,B]\to\bdr$ is a function. The meaning of equation
(\ref{ap0}) is that the \emph{vorticity} of the flow
$\om:=-\Delta\psi$ and the stream function $\psi$ are functionally
dependent. It is customary \cite{CoS} to assume that the constants
$g, B$ and the function $\gamma$, called a \emph{vorticity
function}, are given. The problem consists in determining the curves
$\mcs$ for which there exists a function $\psi$ in $\Om$ satisfying
(\ref{apb}) for some values of the parameters $Q$ and $F$. Any such
solution quadruple $(\mcs, \mcb_F, \psi, Q)$ of (\ref{apb}) gives
rise to a traveling-wave solution of the two-dimensional Euler
equations for a heavy fluid with a free surface, see \cite{CoS} for
details. In particular, the relative velocity of the fluid particles
is given by $(\psiy,-\psix)$.
 Among
various types of waves, of main interest are the periodic waves, for
which $\mcs$ is periodic in the horizontal direction, and the
solitary waves, for which $\mcs$ is asymptotic to a horizontal line
at infinity.

In the related problem of waves of infinite depth, one seeks a curve
$\mcs$ such that in the domain $\Om$ below $\mcs$ there exists a
function $\psi$ which satisfies (\ref{ap0}), (\ref{ap2}),
(\ref{ap3}) and \begin{align}&\psi\geq 0\quad\text{in
}\Om,\tag{1.1b'}\\&\nabla\psi(X,Y)\to (0,-C)\quad\text{as $Y\to
-\infty$, uniformly in $X$,} \tag{1.1c'}\end{align} where
$\gamma:[0,\infty)\to\bdr$ is a given function and $C$ is a
parameter. Of main interest are the periodic waves.

When $\gamma\equiv 0$, the corresponding flow is called
\emph{irrotational}. Nowadays the mathematical theory dealing with
this situation contains a wealth of results, mostly obtained during
the last three decades. The first existence result for waves of
large amplitude was given by Krasovskii \cite{Kr}. Then, global
bifurcation theories for regular waves of various types were given
by Keady and Norbury \cite{KN} and by Amick and Toland
\cite{AT1,AT2}. Moreover, it was shown by Toland \cite{T78} and by
McLeod \cite{ML} that in the closure of these continua of solutions
there exist waves with \emph{stagnation points} (i.e., points at
which the relative fluid velocity is zero) at their crests. The
existence of such waves, called \emph{extreme waves}, was predicted
by Stokes \cite{S}, who also conjectured that their profiles
necessarily have corners with included angle of $120^\circ$ at the
crests. This conjecture was proved independently by Amick, Fraenkel,
and Toland \cite{AFT}, and by Plotnikov \cite{P82}. In more recent
developments,  the method of \cite{AFT} was simplified and
generalized in \cite{EV}, while Fraenkel \cite{Fr} gave a direct
proof of the existence of an extreme wave (of infinite depth), with
corners of $120^\circ$ at the crests, without relying on existence
results for regular waves.

 When $\gamma\not\equiv 0$, the flow is called \emph{rotational} or \emph{with vorticity},
 and advances in the
mathematical theory have been made only in the last few years. The
existence of global continua of solutions was proved by Constantin
and Strauss \cite{CoS} for the periodic finite depth problem, and by
Hur \cite{H} for the periodic infinite depth problem. The wave
profiles in \cite{CoS,H} have one crest and one trough per minimal
period, are monotone between crests and troughs and symmetric with
respect to vertical lines passing through any crest.
 The continuum of solutions in \cite{CoS} contains waves for
  which
the values of $\max_{\overline\Om}\psiy$ are arbitrarily close to
$0$
 and, at least in certain situations \cite{EV4}, the
values of $|\nabla\psi|$ at the crests are also arbitrarily close to
$0$. Thus it is natural to expect that, as in the irrotational case,
waves with stagnation points at their crests, referred to as
\emph{extreme waves}, exist for many vorticity functions, and that
they can be obtained as limits, in a suitable sense, of certain
sequences of regular waves found in \cite{CoS}. In the case of
constant vorticity, numerical evidence \cite{KS, SS, TSP, VB1, VB2,
VB3} strongly points to the existence of extreme waves for any
negative vorticity and for small positive vorticity, and also
indicates that, for large positive vorticity, continua of solutions
bifurcating from a line of trivial solutions develop into
overhanging profiles (a situation which is not possible in the
irrotational case, see \cite{EV3} for references) and do not
approach extreme waves. The above mentioned numerical computations
support the formal speculation in various places in the fluid
mechanics literature \cite{Del}, \cite[\S 14.50]{MT} that extreme
waves with vorticity must also have corners with angles of
$120^\circ$ at the crests. We refer to this claim as the
\emph{Stokes conjecture}, although Stokes himself seems to have made
it explicitly only for irrotational waves.

This article is, to the best of our knowledge, the first rigorous
study of the existence of extreme waves with vorticity and their
properties. Attention is restricted here to the case of periodic
waves in water of finite depth, though it is clear that similar
arguments can be used in related situations, such as solitary waves
of finite depth or periodic waves of infinite depth.

A fundamental difficulty when trying to extend to the general case
of waves with vorticity known results for irrotational waves is that
new methods are needed. Indeed, the irrotational case is the only
one in which conformal mappings can be used to equivalently
reformulate the free-boundary problem as an integral equation
\cite{KN,AT2}, originally due to Nekrasov \cite{Ne}, for a function
which gives the angle between the tangent to the free boundary and
the horizontal. The existence of large-amplitude regular waves, the
existence of extreme waves and the Stokes conjecture are then proved
by using hard analytic estimates for this integral equation
\cite{T}. For waves with vorticity, the existence of large-amplitude
regular waves \cite{CoS} is based on a study of another equivalent
reformulation of the problem, originally due to Dubreil-Jacotin
\cite{DJ}, as a quasilinear second order elliptic partial
differential equation with nonlinear boundary conditions in a fixed
domain. However, this reformulation of the problem does not seem
suitable to describe extreme waves.

Our first task, pursued in Section 2, is thus to identify
generalized formulations of problem (\ref{apb}), under minimal
regularity assumptions, which are suitable for the description of
extreme waves. We introduce two types of solutions, called
respectively \emph{Hardy-space solutions} and \emph{weak solutions}.
An extensive theory of Hardy-space solutions has been given in the
case of irrotational waves by Shargorodsky and Toland \cite{ST}, and
further developed in \cite{EV,EV2,EV3}. The notion of a weak
solution of (\ref{apb}) is inspired by the article of Alt and
Caffarelli \cite{AC}, who considered a class of free boundary
problems in bounded domains (in any number of dimensions) for
harmonic functions satisfying simultaneously on a free boundary a
Dirichlet boundary condition of type (\ref{ap2}) and a boundary
condition of a more general type than (\ref{ap3}). Each of these
solution types has certain advantages over the other, and the main
result of Section 2 is that the two coincide. The material in this
section pervades the rest of the article.

In Section 3 we prove, by means of the maximum principle, an a
priori estimate concerning the \emph{pressure} in the fluid. This
result, which extends to the general case some very recent results
in \cite{EV4} for vorticity functions which do not change sign,
plays a pivotal role in the investigation of the existence of
extreme waves and the Stokes conjecture.

In Section 4 we study the existence of extreme waves. We consider a
sequence of solutions $\{(\mcs_j,\mcb_0, \psi^j, Q_j)\}_{j\geq 1}$
of (\ref{apb}), which have similar properties to the solutions in
the continuum in \cite{CoS}. In particular, for all $j\geq 1$,
$\mcs_j=\{(X,\eta_j(X)):X\in\bdr\}$, where
\[ \text{$\eta_j\in C^1(\bdr)$ is $2L$-periodic, even and } \eta_j'<
0\text{ on }(0,L).\] In Theorem \ref{exi} we prove, under the
\emph{assumption} that \be\text{$\{Q_j\}\indj$ is bounded
above,}\label{fhe}\ee that a subsequence of
$\{(\mcs_j,\mcb_0,\psi^j, Q_j)\}_{j\geq 1}$ necessarily converges in
a specified sense
 to a weak solution
 $(\wids, \mcb_0, \tips, \widetilde Q)$ of (\ref{apb}). Moreover, the additional
 \emph{assumption} that
\be\text{$|\nabla\psi^j(0,\eta_j(0))|\to 0$ as
$j\to\infty$},\label{fhc}\ee ensures that $(\widetilde\mcs,\mcb_0,
\tips, \widetilde Q)$ is an extreme wave. This result is far from
trivial. The most difficult steps in the proof are the definition of
$\wids$ as non-self-intersecting curve in the absence of any uniform
bound on the slopes of $\{\mcs_j\}\indj$, and the recovery of the
free-boundary condition (\ref{ap3}) in a weak sense along $\wids$.

Combining Theorem \ref{exi} with existing results in the literature
\cite{CoS,EV4} on the validity of (\ref{fhe}) and (\ref{fhc}) for a
sequence in the continuum in \cite{CoS}, we obtain in Theorem
\ref{zxc} the existence of extreme waves arising as limits of
regular waves in the case when $\gamma(0)<0$, $\gamma(r)\leq 0$ and
$\gamma'(r)\geq 0$ for all $r\in [0,B]$. However, these assumptions
on $\gamma$ also ensure the existence of \emph{trivial extreme
waves}, for which $\mcs$ is a horizontal line consisting only of
stagnation points and $\psi$ is independent of the $X$ variable.
Unfortunately, it is not known at present whether the extreme waves
we obtain as limits of regular waves are trivial or not.

Nevertheless, it is hoped that Theorem \ref{exi} may be useful in
proofs of the existence of extreme waves in much more general
situations than those in Theorem \ref{zxc}. A key open problem
remains that of determining for what vorticity functions are
(\ref{fhe}) and (\ref{fhc}) necessarily valid for a sequence of
regular waves in the continuum in \cite{CoS}. Theorem \ref{exi}
might also be useful in proving the existence of waves with
stagnation points at the bottom or in the interior of the fluid
domain, in situations when only (\ref{fhe}), but not (\ref{fhc}),
holds for suitable sequences of regular waves.

In Section 5 we address the Stokes conjecture for extreme waves. We
deal with symmetric wave profiles which are locally monotone on
either side of a stagnation point (these assumptions were also
required for the Stokes conjecture in the irrotational case). In
Theorem \ref{sto} we show that at such a stagnation point the
profile has either a corner of $120^\circ$ or a horizontal tangent.
Moreover, we show that the profile necessarily has a corner of
$120^\circ$ whenever the vorticity is nonnegative near the free
surface.

The existence of \emph{trivial extreme waves} shows that the
possibility of a horizontal tangent cannot be ruled out in general.
One should also point out that only smooth vorticity functions are
considered here. For a specific \emph{unbounded} vorticity function,
there exists an explicit example, discovered by Gerstner in 1802,
see \cite[\S 14.40-14.41]{MT}, of an extreme wave whose profile has
cusps at the stagnation points. However, a study of waves with
unbounded vorticity is beyond the scope of this article.

 The proof given here of
the Stokes conjecture for waves with vorticity  is similar in spirit
to that in \cite{AFT} for the irrotational case, in that they are
both based on a blow-up argument, which is a standard tool in the
study of regularity of free boundaries \cite{CafS}. But whilst in
the irrotational case the blow-up is applied in Nekrasov's integral
equation to yield a new integral equation \cite{AFT}, here the
blow-up is performed directly in the physical domain. More
precisely, a \emph{blow-up sequence} (i.e., a sequence of functions
obtained from $\psi$ by rescaling) is shown in Theorem \ref{tblow}
to converge along a subsequence to the solution of a free-boundary
problem for a \emph{harmonic} function in an unbounded domain whose
boundary is curve passing through, and globally monotone on either
side of, the original stagnation point. Apart from a trivial
solution where the free boundary is the real axis, this limiting
problem has another explicit solution, for which the free boundary
consists of two half-line with endpoints at the origin, enclosing an
angle of $120^\circ$ which is symmetric with respect to the
imaginary axis. It was the existence of this solution, nowadays
called the \emph{Stokes corner flow} \cite{Del}, that led Stokes
\cite{S} to his conjecture. It is however the \emph{uniqueness},
which is proved in Theorem \ref{uniq}, of this solution in the class
of symmetric nontrivial solutions of the limiting problem, that
leads to the proof of the conjecture. We show here that the limiting
problem can be described by means of a nonlinear integral equation
for a function $\te^*$ which gives the angle between the tangent to
the free boundary and the horizontal. This equation first arose in
\cite{AFT} as a blow-up limit of Nekrasov's equation, but its
connection to a free-boundary problem seems to have never been
explicitly mentioned in the literature. The monotonicity of the free
boundary means that $0\leq \te^*\leq\pi/2$ on $(0,\infty)$. In this
generality, the uniqueness of the solution of this integral equation
has been proven only very recently in \cite{EV}. Prior to that, a
uniqueness result was known \cite{AFT} only under the restriction
that $0\leq\te^*\leq \pi/3$ on $(0,\infty)$. That result would not
have been enough for a proof of the Stokes conjecture for waves with
vorticity.

We also show, as a byproduct of our approach to the Stokes
conjecture, that if a possibly nonsymmetric extreme wave with
vorticity has lateral tangents at a stagnation point, then the
tangents have to be symmetric with respect to the vertical line
passing through that point and either enclose an angle of
$120^\circ$ or be horizontal. Moreover, the possibility of
horizontal lateral tangents can be ruled out whenever the vorticity
is nonnegative near the free surface.

Some problems left open by the present article are: the structure of
the set of stagnation points for weak solutions of (\ref{apb}), the
regularity of the wave profiles away from stagnation points, the
extent of the validity of (\ref{fhe}) and (\ref{fhc}) for a sequence
in the continuum in \cite{CoS}, the existence of nonsymmetric
extreme waves and the Stokes conjecture in that case, the uniqueness
of solutions of the limiting problem in the absence of symmetry, the
existence of overhanging wave profiles.

\section{Two generalized formulations of the problem}

We consider throughout the rest of the article only the problem of
periodic waves of finite depth. We now make precise the sense in
which (\ref{apb}) is to hold.

It is required throughout that
 \begin{gather}\text{$\mcs$ is locally
rectifiable},\label{rec}\\\psi\in
\Lip(\overline\Om),\label{lip}\\\text{$\mcs$ and $\psi$ are
$2L$-periodic in the horizontal direction},\end{gather} for some
given $L>0$. It is assumed that \be \gamma\in
C^{1,\alpha}([0,B])\quad\text{for some }
\alpha\in(0,1).\label{ga}\ee  It is required that
(\ref{ap*})-(\ref{ap2}) are satisfied in the classical sense. The
condition (\ref{ap0}) is to hold in the following sense:
\be\int_{\Om}\nabla\psi\nabla\zeta\,d\mcl^2=\int_\Om\gamma(\psi)\zeta\,d\mcl^2\label{w0}\quad\text{for
all }\zeta\in C^1_0(\Om),\ee where $\mcl^2$ denotes two-dimensional
Lebesgue measure. Then, standard interior and boundary H\"{o}lder
regularity estimates \cite[Lemma 4.2 and Theorem 6.19]{GT} show that
$\psi\in C^{3,\alpha}_{\textnormal{loc}}(\Om\cup\mcb_F)$, and that
(\ref{ap0}) holds in the classical sense. In particular,
\bese\label{pareq}\begin{align}
\Delta\psix&=-\gamma'(\psi)\psix\quad\text{in }\Om,
\\\Delta\psiy&=-\gamma'(\psi)\psiy\quad\text{in }\Om.\end{align}\ese

Several types of solutions of (\ref{apb}) are described below,
depending on how (\ref{ap3}) is required to hold.

We say that $(\mcs,\mcb_F,\psi,Q)$ is a \emph{classical solution} of
(\ref{apb}) if $\mcs$ is a $C^1$ curve, $\psi\in C^1(\Om\cup\mcs)$
and (\ref{ap3}) holds everywhere on $\mcs$.

We say that $(\mcs,\mcb_F,\psi,Q)$ is a \emph{weak solution} of
(\ref{apb}) if
\begin{align}\int_{\Om}\nabla\psi\nabla\zeta\,d\mcl^2=\int_\Om\gamma(\psi)\zeta\,d\mcl^2-
\int_\mcs(Q-2gY)^{1/2}\zeta\,d\mch^1\label{weak}\\\text{ for all
}\zeta\in C^1_0(\mcu_F),\non\end{align} where
$\mcu_F:=\{(X,Y):X\in\bdr, Y>F\}$ and $\mch^1$ denotes
one-dimensional Hausdorff measure.

 We say that $(\mcs,\mcb_F,\psi,Q)$ is a \emph{Hardy-space solution} of (\ref{apb}) if the
partial derivatives of $\psi$ have non-tangential limits
$\mch^1$-almost everywhere on $\mcs$ which satisfy (\ref{ap3})
$\mch^1$-almost everywhere.

For the definition of a non-tangential limit and for a summary of
notions and results concerning the classical Hardy spaces of
harmonic functions, the reader is referred to the Appendix.

 Obviously, any classical solution of (\ref{apb}) is
both a Hardy-space solution  and a weak solution. The main result of
this section is that the Hardy-space solutions and the weak
solutions of (\ref{apb}) coincide.

\begin{theorem}\label{teqi} Let $(\mcs,\mcb_F,\psi,Q)$ be such that {\rm (\ref{rec})-(\ref{w0})}
 hold. Then
$(\mcs,\mcb_F,\psi,$\\$Q)$ is a Hardy-space solution of {\rm
(\ref{apb})} if and only if it is a weak solution.
\end{theorem}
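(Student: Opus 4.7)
The plan is to derive both implications from a single integration-by-parts computation on an exhaustion of $\Om$ by smooth subdomains, with classical Hardy-space theory supplying the boundary convergence that bridges pointwise non-tangential limits and the weak formulation.

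As a preliminary reduction, under (\ref{rec})--(\ref{w0}) the function $\psi$ belongs to $C^{3,\alpha}_{\loc}(\Om)$ and satisfies $\Delta\psi=-\gamma(\psi)$ classically, while $\psix$ and $\psiy$ are bounded in $\overline{\Om}$ and, on any fixed bounded piece, can be written as bounded harmonic functions plus the Newtonian potentials of the bounded right-hand sides in (\ref{pareq}); those potentials are H\"{o}lder continuous up to $\mcs$. Working on a single period, one period of $\Om$ is a Jordan domain with rectifiable boundary, so the F.\ and M.\ Riesz theorem puts any Riemann map onto the disk in the Hardy class $H^1$; consequently arclength on $\mcs$ and on its circular image are mutually absolutely continuous, non-tangential approach cones correspond under the map, and Fatou's theorem applied to the bounded harmonic parts furnishes non-tangential limits of $\psix$ and $\psiy$ at $\mch^1$-a.e.\ point of $\mcs$. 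A complete treatment of this transfer is in the Appendix.

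Next I would fix a smooth exhaustion $\Om_n\uparrow\Om$ respecting periodicity, for instance the preimages under the Riemann map of concentric circles. For any $\zeta\in C^1_0(\mcu_F)$, Green's formula on $\Om_n$ reads
\begin{equation*}
\int_{\Om_n}\nabla\psi\cdot\nabla\zeta\,d\mcl^2=\int_{\Om_n}\gamma(\psi)\zeta\,d\mcl^2-\int_{\partial\Om_n\cap\mcu_F}(\nabla\psi\cdot\nu_n)\zeta\,d\mch^1,
\end{equation*}
where $\nu_n$ is the outward unit normal to $\Om_n$ and the part of the boundary on $\mcb_F$ contributes nothing since $\zeta$ has compact support above $\mcb_F$. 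The area integrals pass to the desired limits by bounded convergence, and the Hardy-space transfer principle identifies
\begin{equation*}
\lim_{n\to\infty}\int_{\partial\Om_n\cap\mcu_F}(\nabla\psi\cdot\nu_n)\zeta\,d\mch^1=\int_\mcs(\nabla\psi)^*\cdot\nu\,\zeta\,d\mch^1,
\end{equation*}
where $(\nabla\psi)^*$ denotes the non-tangential limit and $\nu$ the $\mch^1$-a.e.\ outward unit normal to $\mcs$.

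Since $\psi\equiv 0$ on $\mcs$ and $\mcs$ has a tangent $\mch^1$-a.e.\ by rectifiability, $(\nabla\psi)^*$ is parallel to $\nu$ at $\mch^1$-a.e.\ point; and since $\psi>0$ inside $\Om$, one has $(\nabla\psi)^*\cdot\nu\le 0$, so $(\nabla\psi)^*\cdot\nu=-|(\nabla\psi)^*|$. Comparing the limiting identity with (\ref{weak}) for every $\zeta\in C^1_0(\mcu_F)$ then shows that
\begin{equation*}
|(\nabla\psi)^*|^2=Q-2gY\qquad\mch^1\text{-a.e.\ on }\mcs
\end{equation*}
if and only if $(\mcs,\mcb_F,\psi,Q)$ is a weak solution, which is precisely (\ref{ap3}) in the Hardy-space sense. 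The main obstacle is the transfer of boundary-integral convergence through the conformal map; this is exactly what the rectifiability hypothesis (\ref{rec}) and the F.\ and M.\ Riesz theorem deliver, after which the argument reduces to dominated convergence together with the observation that $\psi$ vanishes on $\mcs$.
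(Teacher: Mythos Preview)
Your proposal is correct and follows essentially the same route as the paper: the paper organizes the identical ingredients into Lemma~\ref{ntg}/Proposition~\ref{weha} (Newtonian-potential decomposition to get non-tangential limits), Lemma~\ref{tg}/Proposition~\ref{noco} (tangential component of $(\nabla\psi)^*$ vanishes since $\psi=0$ on $\mcs$), and Proposition~\ref{green} (Green's formula on the conformal images of concentric discs, with dominated convergence via $M_{\mathrm{rad}}[f']\in L^1$ from $f'\in H^1_\bdc$). The one step you state as immediate---that $(\nabla\psi)^*$ is parallel to $\nu$ because $\psi\equiv 0$ on $\mcs$---is exactly the content of Lemma~\ref{tg} and does require the absolute-continuity-of-the-boundary-parametrization argument you already have in hand, so be sure to spell it out.
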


The proof of Theorem \ref{teqi} follows from a series of results
concerning some properties of solutions $(\mcs,\mcb_F,\psi,Q)$ of
(\ref{rec})-(\ref{w0}).

 In the irrotational case, the
 partial derivatives of $\psi$ are harmonic functions, and their boundedness in $\Om$ ensures,
 by Fatou's Theorem, that they
 have non-tangential limits $\mch^1$-almost everywhere on
$\mcs$. Here this result is extended to the general case of waves
with vorticity.

\begin{proposition}\label{weha} Let $(\mcs,\mcb_F,\psi,Q)$ be such that {\rm (\ref{rec})-(\ref{w0})}
 hold. Then
the partial derivatives of $\psi$ have non-tangential limits
$\mch^1$-almost everywhere on $\mcs$.
\end{proposition}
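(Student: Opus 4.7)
The plan is to reduce the claim to the classical Fatou--Privalov theorem for bounded harmonic functions on Jordan domains with rectifiable boundary, which is recorded in the Appendix. The obstruction to a direct application is that $\psix$ and $\psiy$ are not harmonic in the rotational case; by (\ref{pareq}) they satisfy $\Delta\psix=-\gamma'(\psi)\psix$ and similarly for $\psiy$. My strategy is to peel off the non-harmonic part of $\psi$ by a local Newtonian potential.

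Fix $z_0\in\mcs$ and let $\mcd:=\{z\in\bdc:|z-z_0|<r\}$ with $r>0$ small enough that $\overline{\mcd}$ misses $\mcb_F$. Since $\psi\in\Lip(\ovo)$ and $\gamma\in C^{1,\alpha}([0,B])$, the function $f:=-\gamma(\psi)\chi_{\mcd\cap\Om}$ lies in $L^\infty(\mcd)$, and the Newtonian potential
\[
v(z):=\frac{1}{2\pi}\int_{\mcd}\log|z-w|\,f(w)\,d\mcl^2(w),\qquad z\in\mcd,
\]
satisfies $v\in C^{1,\beta}(\overline{\mcd})$ for every $\beta\in(0,1)$, with $\Delta v=f$ in the distributional sense on $\mcd$. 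In particular $\nabla v$ is H\"older continuous up to the boundary, so it has ordinary boundary values everywhere on $\mcs\cap\mcd$.

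Setting $h:=\psi-v$ on $\mcd\cap\Om$, hypothesis (\ref{w0}) together with $\Delta v=-\gamma(\psi)$ on $\mcd\cap\Om$ shows that $h$ is harmonic in $\mcd\cap\Om$; because both $\psi$ and $v$ are Lipschitz on $\overline{\mcd\cap\Om}$, so is $h$. Therefore $h_X$ and $h_Y$ are \emph{bounded} harmonic functions on the Jordan domain $\mcd\cap\Om$, whose boundary consists of a rectifiable arc of $\mcs$ (by (\ref{rec})) and a smooth subarc of $\partial\mcd$. The Fatou--Privalov theorem for such domains, supplied by the Appendix, then yields non-tangential limits of $h_X$ and $h_Y$ at $\mch^1$-a.e.\ point of $\mcs\cap\mcd$. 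Adding back the everywhere-defined boundary values of $\nabla v$ gives non-tangential limits of $\nabla\psi$ at $\mch^1$-a.e.\ point of $\mcs\cap\mcd$, and a countable cover of $\mcs$ by such discs (using $2L$-periodicity) completes the proof.

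The step I expect to carry the real work is propagating the classical disc version of Fatou's theorem to the Jordan domain $\mcd\cap\Om$, whose only regularity along $\mcs$ is rectifiability. This rests on combining the F.~and M.~Riesz theorem -- which identifies harmonic measure with $\mch^1$ on a rectifiable Jordan curve -- with the fact that the image of a non-tangential approach region under a Riemann map onto the disc is almost surely non-tangential (a McMillan-type result), and is precisely the kind of harmonic Hardy-space machinery collected in the Appendix.
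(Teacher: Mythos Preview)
Your argument is correct and close in spirit to the paper's, but the decomposition is organized differently. The paper isolates the result as Lemma~\ref{ntg}: for any bounded $w$ with $\Delta w=q$, $q\in L^\infty$, on a Jordan domain with rectifiable boundary, subtracting the Newtonian potential of $q$ leaves a bounded harmonic function, and Fatou's theorem applies. Proposition~\ref{weha} then follows by invoking this lemma for $w=\psix$ and $w=\psiy$ separately, using the equations (\ref{pareq}). You instead subtract a Newtonian potential at the level of $\psi$ itself, obtain a harmonic $h$, and then observe that $h_X,h_Y$ are bounded harmonic. Both routes land on the same Fatou-type input from the Appendix; yours has the incidental advantage of using only that $\gamma(\psi)\in L^\infty$ rather than the differentiated equations (\ref{pareq}), though under the standing hypothesis (\ref{ga}) this gains nothing. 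One small point: your assertion that $\mcd\cap\Om$ is a Jordan domain bounded by a single arc of $\mcs$ and a single arc of $\partial\mcd$ need not hold for arbitrary small $r$, since a merely rectifiable $\mcs$ can meet $\partial\mcd$ in a complicated set; the paper sidesteps this by working in an ``obvious domain $\mcg$'' built as in the proof of Proposition~\ref{green}, and you should do the same rather than rely on the intersection with a disc.
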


 The proof of Proposition \ref{weha} is based on the following
 simple observation,
  whose conclusion holds more generally.

\begin{lemma}\label{ntg} Let $\mcg\subset\bdr^2$ be a bounded open set
 whose boundary is a
rectifiable Jordan curve $\mathcal{J}$. Let $w\in
C^{2,\alpha}_{\textnormal{loc}}(\mcg)\cap L^\infty(\mcg)$ be such
that
\[\Delta w=q\quad\text{in }\mcg,\]
where $q\in C^{0,\alpha}_\loc(\mcg)\cap L^\infty(\mcg)$. Then $w$
has non-tangential limits $\mch^1$-almost everywhere on
$\mathcal{J}$.
\end{lemma}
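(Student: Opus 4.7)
The plan is to split $w$ into a harmonic part and a particular solution of the Poisson equation, the latter being continuous up to the boundary, so that the problem reduces to a classical Fatou-type theorem on the Jordan domain $\mcg$.

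First, I would build a particular solution by the logarithmic (Newtonian) potential. Extending $q$ by zero outside $\mcg$ (or simply integrating over $\mcg$), set
\[
v(z)=\frac{1}{2\pi}\int_{\mcg}\log|z-\zeta|\,q(\zeta)\,d\mcl^2(\zeta).
\]
Since $q\in L^\infty(\mcg)$ and $\mcg$ is bounded, standard potential-theoretic estimates give $v\in C^{1,\beta}(\bdr^2)$ for every $\beta\in(0,1)$, with $\Delta v=q$ in $\mcg$ distributionally, and hence classically on account of the interior $C^{2,\alpha}$ regularity provided by $q\in C^{0,\alpha}_\loc(\mcg)$. In particular, $v$ is continuous on $\overline{\mcg}$, so its non-tangential limits at every point of $\mathcal J$ exist and coincide with the continuous extension.

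Next, I would set $h:=w-v$. Then $h$ is harmonic in $\mcg$ and, because $w\in L^\infty(\mcg)$ and $v$ is continuous on the compact set $\overline{\mcg}$, also bounded. It therefore suffices to show that such a bounded harmonic $h$ has non-tangential limits $\mch^1$-a.e.\ on $\mathcal J$. Here I would invoke conformal mapping: since $\mathcal J$ is a Jordan curve, by Carathéodory's theorem any Riemann map $\varphi:\mathbb D\to\mcg$ extends to a homeomorphism of $\overline{\mathbb D}$ onto $\overline{\mcg}$. The pullback $h\circ\varphi$ is a bounded harmonic function on $\mathbb D$, so by the classical Fatou theorem it has non-tangential limits at $\mcl^1$-almost every point of $\partial\mathbb D$.

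Finally, I would transfer this back to $\mathcal J$. Because $\mathcal J$ is rectifiable, the F.\ and M.\ Riesz theorem guarantees that $\varphi'\in H^1(\mathbb D)$, that $\varphi$ is absolutely continuous on $\partial\mathbb D$, and that harmonic measure on $\mathcal J$ (the image of arclength measure on $\partial\mathbb D$ under $\varphi$) is mutually absolutely continuous with $\mch^1\lfloor\mathcal J$. Moreover, the boundary behaviour of $\varphi$ for a rectifiable Jordan curve is such that non-tangential approach regions in $\mathbb D$ correspond to non-tangential approach regions in $\mcg$ at $\mcl^1$-a.e.\ boundary point. Thus $h$ has non-tangential limits at $\mch^1$-a.e.\ point of $\mathcal J$, and adding back the continuous function $v$ yields the same conclusion for $w$.

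The main technical obstacle is the last step: rigorously matching the two notions of non-tangential approach under $\varphi$. This is handled by the standard Hardy-space theory for rectifiable Jordan domains (in particular the existence of non-zero angular limits of $\varphi'$ a.e., which keeps Stolz cones in $\mathbb D$ inside suitable cones in $\mcg$), to which I would simply appeal via the results summarized in the Appendix.
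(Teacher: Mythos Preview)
Your proposal is correct and follows essentially the same approach as the paper: decompose $w$ as the Newtonian potential of $q$ (which is $C^1$ on $\bdr^2$ and hence continuous up to $\mathcal{J}$) plus a bounded harmonic remainder, and then invoke the Fatou-type result for bounded harmonic functions on rectifiable Jordan domains. The paper's proof is in fact terser than yours, simply citing standard results for the non-tangential limits of the bounded harmonic part, whereas you spell out the conformal-mapping transfer argument that underlies that step.
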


\begin{proof}[Proof of Lemma \ref{ntg}] Let us write $w=u+v$, where
$u$ is the Newtonian potential of $q$,
\[ u(x)=\frac{1}{2\pi}\int_\mcg\log|x-y|q(y)\,d\mcl^2(y)\quad\text{for all }x\in\bdr ^2.\]
It is well known \cite[Lemma 4.1 and Lemma 4.2]{GT} that $u\in
C^1(\bdr^2)\cap C^{2,\alpha}_\loc(\mcg)$ satisfies
\[\Delta u=q\quad\text{in }\mcg.\]
Hence $v$ is a bounded harmonic function in $\mcg$, and therefore
has non-tangential limits $\mch^1$-almost everywhere on
$\mathcal{J}$. Since $u$ is continuous on $\bdr^2$, the required
conclusion follows.
\end{proof}

\begin{proof}[Proof of Proposition \ref{weha}] It suffices to apply
Lemma \ref{ntg} with the partial derivatives of $\psi$, which
satisfy (\ref{pareq}), in the role of $w$ in an obvious domain
$\mcg$.
\end{proof}

Under the assumptions of Proposition \ref{weha} let, for
$\mch^1$-almost every $(X_0, Y_0)\in\mcs$, \be
\nabla\psi(X_0,Y_0):=\lim_{(X,Y)\to(X_0,
Y_0)}\nabla\psi(X,Y),\label{gra}\ee where the limit is taken
non-tangentially within $\Om$. For $\mch^1$-almost every $(X_0,
Y_0)$ $\in\mcs$, let \be\frac{\partial\psi}{\partial
n}(X_0,Y_0):=\nabla\psi(X_0,Y_0)\cdot {\bf n}(X_0,Y_0),\label{no}\ee
where $\cdot$ denotes the standard inner product in $\bdr^2$ and
${\bf n}(X_0,Y_0)$ is the outward unit normal to $\Om$ at
$(X_0,Y_0)$.

\begin{proposition}\label{noco} Let $(\mcs,\mcb_F,\psi,Q)$ be such that
{\rm (\ref{rec})-(\ref{w0})}
 hold, and suppose in addition that {\rm (\ref{ap2})} is satisfied.
  Then, in the
notation of {\rm(\ref{gra})} and {\rm (\ref{no})}, $\psi$ satisfies
{\rm(\ref{ap3})} $\mch^1$-almost everywhere on $\mcs$ if and only if
\[ \frac{\partial\psi}{\partial n}(X,Y)=-(Q-2gY)^{1/2}\quad\text{ for
$\mch^1$-almost every $(X,Y)\in\mcs$.}\]
\end{proposition}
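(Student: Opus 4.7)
The strategy is to show that at $\mch^1$-a.e. point $(X_0, Y_0) \in \mcs$, the gradient $\nabla\psi(X_0,Y_0)$ is antiparallel to the outward unit normal ${\bf n}(X_0,Y_0)$; once this is known, the condition $|\nabla\psi|^2 = Q - 2gY$ is immediately equivalent to $\partial\psi/\partial n = -(Q - 2gY)^{1/2}$, and the proposition follows. Since $\mcs$ is rectifiable, at $\mch^1$-a.e. point it admits a tangent line spanned by a unit vector ${\bf t}(X_0,Y_0)$, and by Proposition \ref{weha} the non-tangential limit $\nabla\psi(X_0,Y_0)$ exists at $\mch^1$-a.e. point as well. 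Fix such a generic point and write ${\bf v} := \nabla\psi(X_0,Y_0)$.

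The key step is to verify that ${\bf v}\cdot{\bf t} = 0$. I would approach $(X_0, Y_0)$ from within $\Om$ along the direction ${\bf u}_\veps := ({\bf t} - \veps\,{\bf n})/\sqrt{1 + \veps^2}$ for small $\veps > 0$; this is a non-tangential direction, so setting $A_s = (X_0, Y_0) + s\,{\bf u}_\veps$, we have $\nabla\psi(A_s) \to {\bf v}$ as $s \to 0^+$. Using $\psi\in C^{3,\alpha}_\loc(\Om)$ to integrate along the ray and the continuity of $\psi$ up to $\mcs$ (with $\psi(X_0,Y_0) = 0$), one obtains
\[ s^{-1}\psi(A_s) \longrightarrow {\bf v}\cdot{\bf u}_\veps \quad \text{as } s \to 0^+. \]
On the other hand, since $\mcs$ has tangent direction ${\bf t}$ at $(X_0,Y_0)$, the distance from $A_s$ to $\mcs$ is $s\veps/\sqrt{1+\veps^2} + o(s)$; together with the Lipschitz continuity of $\psi$ on $\ovo$ (with some constant $L$) and with $\psi\equiv 0$ on $\mcs$, this yields $|\psi(A_s)| \le Ls\veps/\sqrt{1+\veps^2} + o(s)$. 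Comparing the two estimates and letting first $s \to 0^+$ and then $\veps \to 0^+$ gives $|{\bf v}\cdot{\bf t}| = 0$, as desired.

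To finish, the sign of $\partial\psi/\partial n$ is fixed by $\psi \geq 0$ in $\Om$ and $\psi(X_0,Y_0) = 0$: the non-tangential limit along the inward ray $-{\bf n}$ gives ${\bf v}\cdot(-{\bf n}) \geq 0$, hence $\partial\psi/\partial n(X_0,Y_0) \leq 0$. Combined with the orthogonality ${\bf v}\cdot{\bf t} = 0$, this forces ${\bf v} = -|{\bf v}|\,{\bf n}(X_0,Y_0)$, so $\partial\psi/\partial n = -|\nabla\psi|$ and the claimed equivalence is immediate. The main technical obstacle is the orthogonality step: one must carefully weave together the non-tangential convergence of $\nabla\psi$ along interior rays (which controls the derivative of $\psi$ in the radial direction from $(X_0,Y_0)$), the Lipschitz bound (which provides an upper bound on $\psi$ via the distance to $\mcs$), and the geometric information that $\mcs$ has a tangent at the chosen rectifiable point — none of which, taken alone, gives control on the tangential component of ${\bf v}$.
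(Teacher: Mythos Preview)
Your argument is correct and follows a genuinely different route from the paper's. The paper proves the tangential vanishing $\nabla\psi\cdot{\bf t}=0$ via Lemma~\ref{tg}, which pulls the problem back to the unit disc by a conformal map $f$, uses that $f'\in H^1_{\mathbb C}(\mcd)$ (since $\mcs$ is rectifiable), and passes to the limit in the chain-rule identity $\frac{d}{dt}w(f(re^{it}))=\nabla w\cdot\frac{d}{dt}f(re^{it})$ by dominated convergence with the radial maximal function of $f'$ as majorant. Your proof instead stays in the physical domain: you integrate $\nabla\psi$ along a nearly tangential interior ray to identify $\lim_{s\to0}s^{-1}\psi(A_s)$ with ${\bf v}\cdot{\bf u}_\veps$, then bound $|\psi(A_s)|$ by the Lipschitz constant times $\mathrm{dist}(A_s,\mcs)$, which you control using differentiability of the arc-length parametrization at the chosen point. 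Both arguments hinge on the same ingredients (non-tangential limits of $\nabla\psi$, Lipschitz bound, a.e.\ existence of tangents), but package them differently: the paper's lemma is a clean reusable statement that is invoked again later (e.g.\ in the proof of Theorem~\ref{exi} and Theorem~\ref{tblow}), while your approach is more elementary in that it avoids the $H^1$ theory of the Riemann map and the maximal-function domination. One small point of presentation: you only need the \emph{upper} bound $\mathrm{dist}(A_s,\mcs)\le s\veps/\sqrt{1+\veps^2}+o(s)$ (obtained by comparing $A_s$ with the point $\gamma(s/\sqrt{1+\veps^2})$ on $\mcs$, where $\gamma$ is the arc-length parametrization), not the equality you state.
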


The proof of Proposition \ref{noco} depends on the following lemma.

\begin{lemma}\label{tg} Let $\mcg\subset\bdr^2$ be a bounded open set whose boundary is a
rectifiable Jordan curve $\mathcal{J}$. Let $w\in
C^1(\mcg)\cap\Lip(\overline \mcg)$ be such that the partial
derivatives of $w$ have non-tangential limits $\mch^1$-almost
everywhere on $\mathcal{J}$. Suppose that $w$ is a constant on a
closed arc $\mathcal{I}$ of $\mathcal{J}$. Then \[ \nabla w(X_0,
Y_0)\cdot{\bf t}(X_0,Y_0)=0\quad\text{for $\mch^1$-almost every
$(X_0,Y_0)\in\mathcal{I}$},\] where $\nabla w(X_0,Y_0)$ denotes the
non-tangential limit within $\mcg$ of $\nabla w$ at $(X_0,Y_0)$ and
${\bf t}(X_0, Y_0)$ is a unit tangent to $\mathcal{J}$ at $(X_0,
Y_0)$.
\end{lemma}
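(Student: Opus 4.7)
My plan is to work locally near a point $(X_0, Y_0)$ in the relative interior of $\mathcal{I}$ at which both the tangent $\mathbf{t}$ to $\mathcal{J}$ exists (at $\mch^1$-a.e.\ point, by rectifiability of $\mathcal{J}$) and the non-tangential limit $g := \nabla w(X_0, Y_0)$ exists (at $\mch^1$-a.e.\ point by hypothesis). After a rigid motion I may assume $(X_0, Y_0) = (0,0)$, $\mathbf{t} = (1,0)$, $\mathbf{n} = (0,1)$ is the inward normal, and $\mcg$ lies locally on the upper side of $\mathcal{J}$. The existence of the tangent at $0$ gives, for every $\lambda \in (0,1)$, some $\rho(\lambda) > 0$ such that $\mathcal{J} \cap B_{\rho(\lambda)}(0) \subset \{|Y| \leq \lambda|X|\}$, whence the truncated cone $C_\lambda := \{Y > \lambda|X|\} \cap B_{\rho(\lambda)}(0)$ is entirely contained in $\mcg$.

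I first derive a boundary Taylor expansion from the interior: for $P \in C_\lambda$, the segment $\{tP : t \in (0,1]\}$ lies in $C_\lambda \subset \mcg$, so by the fundamental theorem of calculus and continuity of $w$ on $\overline{\mcg}$,
\[ w(P) - w(0,0) = \int_0^1 \nabla w(tP) \cdot P \, dt, \]
with the integrand bounded by $\Lip(w)$ since $w \in \Lip(\overline{\mcg})$ forces $|\nabla w| \leq \Lip(w)$ in $\mcg$. The non-tangential convergence $\nabla w(tP) \to g$ as $P \to 0$ in $C_\lambda$ is uniform in $t \in (0,1]$, so
\[ w(P) = w(0,0) + g \cdot P + o(|P|) \quad \text{as } P \to 0, \ P \in C_\lambda. \]

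Now let $\gamma$ be the arc-length parametrization of $\mathcal{J}$ with $\gamma(0) = 0$ and $\gamma'(0) = \mathbf{t}$, and set $P_s := (s, 2\lambda s) \in C_\lambda$ for small $s > 0$. The tangent condition gives $\gamma(s) = (s, 0) + o(s)$, so $|P_s - \gamma(s)| \leq 2\lambda s + o(s)$. Using $w(\gamma(s)) = c$, the Lipschitz estimate $|w(\gamma(s)) - w(P_s)| \leq \Lip(w)\,(2\lambda s + o(s))$ combined with the Taylor expansion $w(P_s) = c + s\,g\cdot\mathbf{t} + 2\lambda s\,g\cdot\mathbf{n} + o(s)$ yields, after dividing by $s$ and letting $s \to 0^+$,
\[ |g\cdot\mathbf{t} + 2\lambda\,g\cdot\mathbf{n}| \leq 2\lambda\,\Lip(w). \]
Since $\lambda \in (0,1)$ was arbitrary, letting $\lambda \to 0$ gives $g\cdot\mathbf{t} = 0$, which is the required conclusion; the case $s < 0$ is treated symmetrically. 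The main technical hurdle is the initial geometric setup, where I must carefully invoke the approximate-tangent structure of rectifiable curves to guarantee that $C_\lambda \subset \mcg$ and to control $|P_s - \gamma(s)|$; the remainder is elementary real analysis powered by the uniform convergence of $\nabla w$ on truncated non-tangential cones.
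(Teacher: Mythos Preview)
Your argument is correct and proceeds by a genuinely different route from the paper's. The paper uses a global conformal map $f:\mcd\to\mcg$, invokes the classical fact that $f'\in H^1_\bdc(\mcd)$ because $\mathcal{J}$ is rectifiable, writes $w(f(re^{it_2}))-w(f(re^{it_1}))$ as a line integral of $\nabla w$ along the image of the circle of radius $r$, and passes to the limit $r\nearrow 1$ by dominated convergence (the dominating function being the radial maximal function of $f'$, which is integrable since $f'\in H^1_\bdc(\mcd)$). Since $w$ is constant on $\mathcal{I}$, the limiting integrand $\nabla w(f(e^{it}))\cdot\frac{d}{dt}f(e^{it})$ vanishes a.e., and $\frac{d}{dt}f(e^{it})$ is a nonzero tangent vector a.e.

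Your approach is entirely local and real-variable: at each good boundary point you build a first-order Taylor expansion of $w$ along radial segments inside a fixed non-tangential cone $C_\lambda$ (using that $tP\in C_\lambda$ whenever $P\in C_\lambda$, so the non-tangential convergence is automatically uniform in $t\in(0,1]$), compare with the constant value of $w$ on the nearby arc via the global Lipschitz bound, and finally let the aperture parameter $\lambda\to 0$. This avoids conformal-mapping and Hardy-space machinery altogether and would transport to higher dimensions, where the paper's method is unavailable. The paper's route, by contrast, dovetails with the conformal-map framework used immediately afterwards in Proposition~\ref{green}, so the cost of introducing $f$ is amortized. Your acknowledged ``main technical hurdle'' (that $C_\lambda\subset\mcg$ and the control of $|P_s-\gamma(s)|$) is genuine but routine: since a Jordan curve is a homeomorphic image of a circle, points of $\mathcal{J}$ close to $(0,0)$ in the plane must be close to $s=0$ in the parametrization, whence $\mathcal{J}\cap B_{\rho(\lambda)}(0)\subset\{|Y|\le\lambda|X|\}$ for small $\rho(\lambda)$, and the choice of $\mathbf{n}$ as the inward normal then forces $C_\lambda\subset\mcg$.
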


\begin{proof}[Proof of Lemma \ref{tg}] Let $\mcd$ be the unit disc in
the plane, and let $f:\mcd\to \mcg$ be a conformal mapping from
$\mcd$ onto $\mcg$. Since the boundary of $\mcg$ is a rectifiable
Jordan curve, it is classical \cite[Theorem 3.11 and Theorem
3.12]{Du} that $f$ is a homeomorphism from the closure of $\mcd$
onto the closure of $\mcg$, $f'$ belongs to the Hardy space
$H^1_\bdc(\mcd)$, the mapping $t\mapsto f(e^{it})$ is locally
absolutely continuous and \[ \frac{d}{dt}f(e^{it})=\lim_{r\nearrow
1}ire^{it}f'(re^{it})\quad\text{for almost every }t\in\bdr,\] where
$'$ denotes complex differentiation. Let $a,\, b\in\bdr$ be such
that $t\mapsto f(e^{it})$ is a bijection from $[a,b]$ onto
$\mathcal{I}$. Then, for every $t_1,\,t_2\in[a,b]$ with $t_1\leq
t_2$ and for every $r\in(0,1)$, \be
w(f(re^{it_2}))-w(f(re^{it_1}))=\int_{t_1}^{t_2}\nabla
w(f(re^{it}))\cdot \frac{d}{dt} f(re^{it})\,dt. \label{fuca}\ee We
now pass to the limit as $r\nearrow 1$ in (\ref{fuca}) using the
Dominated Convergence Theorem, with the integrands bounded in
absolute value by the integrable function $||\nabla
w||_{L^\infty(\mcg)} M_{\textnormal{rad}}[f']$, where
$M_{\textnormal{rad}}[f']$ denotes the radial maximal function, see
the Appendix, of the function $f'\in H^1_{\bdc}(\mcd)$, to obtain
(\ref{fuca}) with $r=1$. It is important in this argument that, for
almost every $t\in (a,b)$, $f(re^{it})\to f(e^{it})$
non-tangentially within $\mcg$ as $r\nearrow 1$, see \cite[Section
3.5]{Du}. Since $\frac{d}{dt} f(e^{it})\neq 0$ for almost every
$t\in(a,b)$, the required conclusion follows.
\end{proof}

\begin{proof}[Proof of Proposition \ref{noco}] The required result
follows immediately by applying Lemma \ref{tg} to the function
$\psi$ in an obvious domain $\mcg$. Note also that, when it is
assumed that $\psi$ satisfies (\ref{ap3}), the sign of the normal
derivative of $\psi$ can be determined from the fact that $\psi=0$
on $\mcs$ and $\psi\geq 0$ in $\Om$.
\end{proof}

\begin{proposition}\label{green} Let $(\mcs,\mcb_F,\psi,Q)$
 be such that {\rm (\ref{rec})-(\ref{w0})}
 hold. Then, in the notation of
 {\rm (\ref{no})},
\be\int_{\Om}\nabla\psi\nabla\zeta\,d\mcl^2=\int_\Om\gamma(\psi)\zeta\,d\mcl^2+
\int_\mcs \frac{\partial\psi}{\partial
n}\,\zeta\,d\mch^1\label{gree}\ee for all $\zeta\in C_0^1(\mcu_F)$.
\end{proposition}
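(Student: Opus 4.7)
The plan is to localize the identity (\ref{gree}) to a bounded Jordan subdomain $\mcg\subset\Om$ that captures $\text{supp}(\zeta)\cap\Om$, exhaust $\mcg$ from inside by smooth approximations obtained from a conformal map, apply the classical Green's formula on each approximation, and then pass to the limit by invoking the non-tangential limits of $\nabla\psi$ supplied by Proposition \ref{weha}. Concretely, since $\text{supp}(\zeta)$ is compact in $\mcu_F$, I would fix a bounded Jordan domain $\mcg\subset\Om$ whose rectifiable Jordan boundary consists of a closed arc $\mathcal{I}\subset\mcs$ containing $\mcs\cap\text{supp}(\zeta)$ together with a smooth Jordan arc $\Gamma\subset\Om$ meeting $\mathcal{I}$ only at its endpoints, arranged so that $\zeta\equiv 0$ in a neighborhood of $\Gamma$ and on $\overline\Om\setminus\overline\mcg$. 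Both sides of (\ref{gree}) are then unchanged if $\Om$ is replaced by $\mcg$ and $\mcs$ by $\mathcal{I}$.

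Let $f:\mcd\to\mcg$ be a conformal mapping. As in the proof of Lemma \ref{tg}, $f$ extends to a homeomorphism $\overline\mcd\to\overline\mcg$ with $f'\in H^1_{\bdc}(\mcd)$. For each $r\in(0,1)$, the set $f(r\mcd)$ is a subdomain of $\mcg$ with real-analytic boundary, and $\psi\in C^{3,\al}_\loc(\Om)$ satisfies $-\Delta\psi=\gamma(\psi)$ classically on $\overline{f(r\mcd)}$, so the classical Green's formula gives
\[ \int_{f(r\mcd)}\nabla\psi\cdot\nabla\zeta\,d\mcl^2=\int_{f(r\mcd)}\gamma(\psi)\zeta\,d\mcl^2+\int_{f(r\partial\mcd)}\zeta\,\frac{\partial\psi}{\partial n}\,d\mch^1. \]

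I would then pass to the limit as $r\nearrow 1$. The two area integrals converge to the corresponding integrals over $\mcg$ by dominated convergence, the integrands being bounded and supported in $\overline\mcg$. For the boundary integral, identifying $\bdr^2$ with $\bdc$, the outward unit normal to $f(r\mcd)$ at $f(re^{it})$ is $e^{it}f'(re^{it})/|f'(re^{it})|$ and the arc-length element is $r|f'(re^{it})|\,dt$, so after parametrizing by $t\in[0,2\pi]$ the integrand becomes $\zeta(f(re^{it}))\,\nabla\psi(f(re^{it}))\cdot(re^{it}f'(re^{it}))$. For almost every $t$, $f(re^{it})\to f(e^{it})$ non-tangentially within $\mcg$ (\cite[Section 3.5]{Du}), so by Proposition \ref{weha} $\nabla\psi(f(re^{it}))$ converges to the non-tangential limit defined in (\ref{gra}), while $f'(re^{it})$ converges radially to its boundary value. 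The absolute value of the integrand is bounded by $\|\zeta\|_{L^\infty}\|\nabla\psi\|_{L^\infty(\Om)}\,M_{\textnormal{rad}}[f'](e^{it})$, which is integrable on $[0,2\pi]$ because $f'\in H^1_{\bdc}(\mcd)$. Dominated convergence then identifies the limit of the boundary integrals as $\int_{\partial\mcg}\zeta\frac{\partial\psi}{\partial n}\,d\mch^1=\int_{\mathcal{I}}\zeta\frac{\partial\psi}{\partial n}\,d\mch^1$ (since $\zeta\equiv 0$ on $\Gamma$), which is the desired right-hand side.

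The principal obstacle I expect is exactly this passage to the limit in the boundary integral: without the $H^1_{\bdc}$-control on $f'$ there is no obvious uniform-in-$r$ majorant for the integrand, and it is the domination by the radial maximal function $M_{\textnormal{rad}}[f']$ — precisely the ingredient used in Lemma \ref{tg} — that makes dominated convergence applicable and allows the non-tangential limits from Proposition \ref{weha} to appear as the genuine boundary values of $\nabla\psi$.
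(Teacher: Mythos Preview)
Your proposal is correct and follows essentially the same route as the paper: localize to a bounded Jordan subdomain $\mcg$ with $\partial\mcg$ containing an arc of $\mcs$, conformally map from the unit disc, apply the classical Green's formula on $f(r\mcd)$, and pass to the limit $r\nearrow 1$ by dominated convergence with $M_{\textnormal{rad}}[f']$ as the integrable majorant. The only cosmetic difference is that the paper routes the complementary part of $\partial\mcg$ down to $\mcb_F$ rather than keeping it entirely in $\Om$, which is immaterial since $\zeta$ vanishes near that portion in either construction.
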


\begin{proof}[Proof of Proposition \ref{green}] Fix
$\zeta\in C_0^1(\mcu_F)$. Then one can find points $Z_1,\,Z_2$ on
$\mcs$, $W_1,\,W_2$ on $\mcb_F$, and a bounded open set $\mcg$
contained in $\Om$, whose boundary is a rectifiable Jordan curve
$\mathcal{J}:=\mathcal{I}\cup\mathcal{L}_2\cup\mathcal{M}\cup\mathcal{L}_1$,
such that
\begin{gather} (\text{supp}\,\zeta) \cap\Om\subset
\mcg,\label{mna}\\\text{dist}\,((\text{supp}\,\zeta),
\mathcal{J}\setminus\mathcal{I})>0,\label{mnb}\end{gather} where
$\mathcal{I}$ is the arc of $\mcs$ joining $Z_1$ and $Z_2$,
$\mathcal{L}_2$ is an arc contained in $\Om$ joining $Z_2$ and
$W_2$, $\mathcal{M}$ is the line segment joining $W_2$ and $W_1$,
and $\mathcal{L}_1$ is an arc contained in $\Om$ joining $W_1$ and
$Z_1$. To prove (\ref{gree}) is equivalent, by means of
(\ref{mna})-(\ref{mnb}), to proving \be
\int_\mcg\nabla\psi\nabla\zeta\,d\mcl^2=\int_\mcg\gamma(\psi)\zeta\,d\mcl^2+
\int_{\mathcal{I}} \frac{\partial\psi}{\partial
n}\,\zeta\,d\mch^1.\label{negr}\ee

Let $\mcd$ be the unit disc in the plane, and let $f:\mcd\to \mcg$
be a conformal mapping from $\mcd$ onto $\mcg$ and a homeomorphism
from the closure of $\mcd$ onto the closure of $\mcg$. Let $a,\,
b\in\bdr$ be such that $t\mapsto f(e^{it})$ is a bijection from
$[a,b]$ onto $\mathcal{I}$. For every $r\in(0,1)$, let $\mcd_r$ be
the disc centred at $0$ and of radius $r$, and $\mcg_r:=f(\mcd_r)$.
It follows from (\ref{mnb}) and the standard Green's Formula that,
for all $r$ sufficiently close to $1$, \be
\int_{\mcg_r}\nabla\psi\nabla\zeta\,d\mcl^2=\int_{\mcg_r}\gamma(\psi)\zeta\,d\mcl^2+
\int_a^b\left[\nabla\psi(f(re^{it}))\cdot
\left(i\frac{d}{dt}f(re^{it})\right)\right]\zeta(f(re^{it}))\,dt\label{ave}\ee
Since $f(re^{it})\to f(e^{it})$ non-tangentially within $\mcg$ as
$r\nearrow 1$, for almost every $t\in (a,b)$, and since the
integrands in the last term of (\ref{ave}) are bounded in absolute
value by the integrable function $||\nabla
w||_{L^\infty(\mcg)}||\zeta||_{L^\infty(\mcg)}
M_{\textnormal{rad}}[f']$, one can pass to the limit as $r\nearrow1
$ in (\ref{ave}), using the Dominated Convergence Theorem, to get
(\ref{negr}). This completes the proof of Proposition \ref{green}.
\end{proof}

\begin{proof}[Proof of Theorem \ref{teqi}] Suppose first that
$(\mcs,\mcb_F,\psi,Q)$ is a Hardy-space solution of (\ref{apb}). It
is immediate from Proposition \ref{noco} and Proposition \ref{green}
that $(\mcs,\mcb_F,\psi,Q)$ is a weak solution.

Suppose now that $(\mcs,\mcb_F,\psi,Q)$ is a weak solution of
(\ref{apb}). By comparing (\ref{weak}) and (\ref{gree}), we deduce
that, for all $\zeta\in C_0^1(\mcu_F)$,
\be\int_\mcs\left[\frac{\partial\psi}{\partial
n}+(Q-2gY)^{1/2}\right]\zeta\,d\mch^1=0. \label{cali}\ee A simple
approximation argument shows that (\ref{cali}) also holds for all
$\zeta\in C_0(\mcu_F)$, from where it is immediate that
\[ \frac{\partial\psi}{\partial
n}(X,Y)=-(Q-2gY)^{1/2}\quad\text{ for $\mch^1$-almost every
$(X,Y)\in\mcs$.}\] It follows from Proposition \ref{noco} that
$(\mcs,\mcb_F,\psi,Q)$ is a Hardy-space solution.

The proof of Theorem \ref{teqi} is therefore completed.
\end{proof}

We conclude this section with the following obvious observation.

\begin{proposition}\label{tran} Let $(\mcs,\mcb_F, \psi, Q)$ be a classical/weak
solution of {\rm (\ref{apb})}, $\Om$ be the open set whose boundary
consists of $\mcs$ and $\mcb_F$, and $G\in\bdr$. Let
$\widehat\Om:=\{(X,Y)\in\bdr^2: (X,Y+G)\in\Om \}$, and
$\hat\psi:\widehat\Om\to\bdr$ be given by
$\hat\psi(X,Y):=\psi(X,Y+G)$ for all $(X,Y)\in\widehat\Om$. Then the
boundary of $\widehat\Om$ consists of the line $\mcb_{F-G}$ and a
curve $\widehat\mcs$, and $(\widehat\mcs,\mcb_{F-G}, \hat\psi,
Q-2gG)$ is a classical/weak solution of {\rm (\ref{apb})}.
\end{proposition}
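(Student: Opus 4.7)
The plan is to verify each condition in (\ref{apb}) directly for the translated quadruple $(\widehat\mcs,\mcb_{F-G},\hat\psi,Q-2gG)$, exploiting the fact that vertical translation commutes with all the differential operators involved and that the translation invariance of Lebesgue and Hausdorff measures allows a clean change of variables in the weak formulation.

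First I would record that $\widehat\Om$ is the vertical translate of $\Om$ by $-G$, so its boundary consists of $\mcb_{F-G}$ together with $\widehat\mcs:=\{(u(s),v(s)-G):s\in\bdr\}$, which is again a $2L$-periodic locally rectifiable curve lying above $\mcb_{F-G}$. Since $\nabla\hat\psi(X,Y)=\nabla\psi(X,Y+G)$ and $\Delta\hat\psi(X,Y)=\Delta\psi(X,Y+G)$, the conditions (\ref{ap0}), (\ref{ap*}), (\ref{ap1}) and (\ref{ap2}) for $\hat\psi$ on $\widehat\Om$ follow at once from the corresponding conditions for $\psi$ on $\Om$. The Bernoulli identity on $\widehat\mcs$ is the only step with any content: for $(X,Y)\in\widehat\mcs$ one has $(X,Y+G)\in\mcs$, hence
\[
|\nabla\hat\psi(X,Y)|^2+2gY=|\nabla\psi(X,Y+G)|^2+2g(Y+G)-2gG=Q-2gG,
\]
so (\ref{ap3}) holds with the new Bernoulli constant $Q-2gG$. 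This settles the classical case.

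For the weak case, given $\hat\zeta\in C_0^1(\mcu_{F-G})$ I would set $\zeta(X,Y):=\hat\zeta(X,Y-G)$, which lies in $C_0^1(\mcu_F)$. The change of variables $(X,Y)\mapsto(X,Y+G)$ gives $\nabla\zeta(X,Y+G)=\nabla\hat\zeta(X,Y)$, so
\[
\int_{\widehat\Om}\nabla\hat\psi\cdot\nabla\hat\zeta\,d\mcl^2=\int_\Om\nabla\psi\cdot\nabla\zeta\,d\mcl^2,\qquad \int_{\widehat\Om}\gamma(\hat\psi)\hat\zeta\,d\mcl^2=\int_\Om\gamma(\psi)\zeta\,d\mcl^2,
\]
while the translation invariance of one-dimensional Hausdorff measure together with the identity $(Q-2gG)-2gY=Q-2g(Y+G)$ yields
\[
\int_{\widehat\mcs}\bigl((Q-2gG)-2gY\bigr)^{1/2}\hat\zeta\,d\mch^1=\int_\mcs(Q-2gY')^{1/2}\zeta\,d\mch^1.
\]
Substituting these three identities into the weak identity (\ref{weak}) for $(\mcs,\mcb_F,\psi,Q)$ produces the corresponding weak identity for $(\widehat\mcs,\mcb_{F-G},\hat\psi,Q-2gG)$.

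There is no genuine obstacle here; the only thing to be careful about is bookkeeping of the constant when unraveling $(Q-2gG)-2gY$, which is precisely where the choice of the new Bernoulli constant is forced. Consequently $(\widehat\mcs,\mcb_{F-G},\hat\psi,Q-2gG)$ is a classical (respectively weak) solution of (\ref{apb}) whenever $(\mcs,\mcb_F,\psi,Q)$ is.
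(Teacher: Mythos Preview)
Your proof is correct. The paper itself offers no proof of this proposition, introducing it simply as an ``obvious observation''; your direct verification via translation invariance is exactly the routine check the paper leaves to the reader.
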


\section{An a priori estimate on the pressure in the fluid}

In this section we use the maximum principle to derive an a priori
estimate
 on the pressure in the fluid.
Apart from being of interest in itself, this result plays an
essential role in the investigation of the existence of extreme
waves and the Stokes conjecture with vorticity.

 Let $(\mcs,\mcb_F, \psi, Q)$ be a classical solution of {\rm (\ref{apb})}. Let $\hg :[0,B]\to\bdr$ be given by \be
\hg(r)=\int_0^r\gamma(t)\,dt\quad\text{for all
}r\in[0,B].\label{hg}\ee The function $R[\psi]$ given in $\ovo$ by
\[
R[\psi]:=\frac{1}{2}\vert\nabla\psi\vert^{2}+gY-\frac{1}{2}Q+\hg(\psi)\]
is, up to a constant, the negative of the pressure in the fluid. Let
 $T[\psi]$ be given in $\ovo$
by \[
T[\psi]:=\frac{1}{2}\vert\nabla\psi\vert^{2}+gy-\frac{1}{2}Q+\hg(\psi)-\varpi\psi,
\] where \[\varpi:=\frac{1}{2}\max\left\{0,
\max_{r\in[0,B]}\gamma(r)\right\}.\] Obviously $T[\psi]=R[\psi]$
whenever $\gamma(r)\leq 0$ for all $r\in[0,B]$. Theorem \ref{bxv}
below is an extension of {\rm \cite[Theorem 2.1 and Theorem
2.4]{EV4}}, where the same result was proved under the assumption
that $\gamma:[0,B]\to\bdr$ does not change sign.

\begin{theorem}\label{bxv} Let $(\mcs,\mcb_F, \psi, Q)$ be a classical solution of {\rm
(\ref{apb})} such that $\psiy<0$ in $\Om$.
 Then $T[\psi]\leq 0$ in $\ovo$.
\end{theorem}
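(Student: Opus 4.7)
The plan is to prove $T[\psi]\leq 0$ on $\overline\Omega$ by maximum-principle arguments, treating the free surface $\mathcal{S}$, the bottom $\mathcal{B}_F$, and the interior of $\Omega$ separately.

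On $\mathcal{S}$, the Bernoulli condition \eqref{ap3} together with $\psi=0$ gives $T\equiv 0$, since $\hg(\psi)$ and $\varpi\psi$ both vanish. On $\mathcal{B}_F$ one has $\psi\equiv B$, so $\psi_X=\psi_{XX}=0$; differentiating $R[\psi]$ and using $\Delta\psi=-\gamma(\psi)$ yields the identity $\partial_Y R[\psi]=\psi_X\psi_{XY}-\psi_Y\psi_{XX}+g$, whence $\partial_Y T=g-\varpi\psi_Y$ on $\mathcal{B}_F$. Since $\psi_Y<0$ in $\Omega$ forces $\psi_Y\leq 0$ on $\mathcal{B}_F$ by continuity, $\partial_Y T\geq g>0$, which rules out a maximum of $T$ on the interior of $\mathcal{B}_F$ (such a point would require the outward normal derivative $-\partial_Y T$ to be nonnegative).

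The crux, and the main obstacle, is to exclude positive interior maxima of $T$ in $\Omega$. Suppose, for contradiction, that $(X_0,Y_0)\in\Omega$ is such a maximum. Using the identities $\partial_X R=-\psi_X\psi_{YY}+\psi_Y\psi_{XY}$ and $\partial_Y R=\psi_X\psi_{XY}-\psi_Y\psi_{XX}+g$ (obtained from $R$ via the PDE), the criticality conditions $\nabla T(X_0,Y_0)=0$ combined with $\psi_{XX}+\psi_{YY}=-\gamma(\psi)$ form a linear system in the second derivatives of $\psi$ that, using $\psi_Y<0$, can be uniquely solved for $\psi_{XY}$, $\psi_{XX}$, $\psi_{YY}$ in terms of $\psi_X,\psi_Y,\gamma(\psi),\varpi$ and $g$. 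Substituting these expressions into
\[\Delta T[\psi]=2\psi_{XY}^{2}+2\bigl(\psi_{YY}+\tfrac12\gamma(\psi)\bigr)^{2}+\gamma(\psi)\bigl(\varpi-\tfrac12\gamma(\psi)\bigr),\]
(itself obtained from $\Delta R=2\psi_{XY}^{2}-2\psi_{XX}\psi_{YY}$ together with $\Delta\psi=-\gamma(\psi)$), writing $\beta:=\varpi-\tfrac12\gamma(\psi)\geq 0$, and using the key algebraic identity $2\beta+\gamma(\psi)=2\varpi$ — which is exactly what the specific choice of $\varpi$ is engineered to produce — a direct computation reduces to
\[|\nabla\psi|^{2}\,\Delta T(X_0,Y_0)=2\varpi\beta|\nabla\psi|^{2}-4\beta g\,\psi_Y+2g^{2}.\]
Each term on the right is nonnegative ($\varpi,\beta\geq 0$ for the first; $\beta\geq 0$, $g>0$, $\psi_Y<0$ for the second) and the last is strictly positive, so $\Delta T(X_0,Y_0)>0$, contradicting the necessary condition $\Delta T\leq 0$ at an interior maximum.

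With positive maxima excluded on $\mathcal{B}_F$ and in the interior of $\Omega$, the maximum of $T$ on $\overline\Omega$ must be attained on $\mathcal{S}$, where $T=0$; this yields $T[\psi]\leq 0$ on $\overline\Omega$. The cancellation $2\beta+\gamma=2\varpi$ is the essential ingredient that upgrades the argument of \cite{EV4} — valid there only for sign-preserving $\gamma$ — to arbitrary vorticity functions.
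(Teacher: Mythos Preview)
Your proof is correct and follows essentially the same maximum-principle strategy as the paper. The only difference is presentational: the paper invokes an elliptic identity of Sperb for the auxiliary function $S=T[\psi]$ (equation (3.5) there), which gives a full second-order inequality $\Delta S + \text{(first-order terms)} \geq 0$ valid throughout $\Omega$, whereas you compute $\Delta T$ directly only at an interior critical point. At such a point the first-order terms in Sperb's identity vanish and one recovers exactly your formula $|\nabla\psi|^2\Delta T = 2\varpi\beta|\nabla\psi|^2 - 4\beta g\psi_Y + 2g^2$; likewise your bottom-boundary computation $\partial_Y T = g - \varpi\psi_Y$ is the paper's (3.6) with $\lambda\equiv -\varpi$. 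Your hands-on route avoids the citation and makes the role of the specific constant $\varpi$ more transparent, while the paper's formulation yields a slightly stronger statement (an elliptic differential inequality everywhere, not just at critical points) and fits into Sperb's general framework for functions of the form $\frac12|\nabla\psi|^2+\hat\Gamma(\psi)+\Lambda(\psi)$.
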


\begin{proof}[Proof of Theorem \ref{bxv}] The proof is merely an
application of a result in Sperb \cite[Section 5.2]{Sp}. I am
grateful to John Toland for pointing out this reference to me.

 The required result is obtained from a more general one. Let
$\lambda:[0,B]\to\bdr$ be a $C^1$ function, let
$\Lambda:[0,B]\to\bdr$ be given by $\Lambda(r)=\int_0^r\la(t)\,dt$
for all $r\in[0,B]$, and let $S:\ovo\to\bdr$ be given by \be
S:=\frac{1}{2}\vert\nabla\psi\vert^{2}+gY-\frac{1}{2}Q+\hg(\psi)+\Lambda(\psi).\label{r}\ee
Then $S=0$ on $\mcs$. We seek conditions on $\la$ which ensure that
$S\leq 0$ in $\ovo$.

 Let $W:\ovo\to\bdr$ be given by \be
W=\frac{1}{2}\vert\nabla\psi\vert^{2}+\hg(\psi)+\Lambda(\psi).\label{ff}\ee
It is easy to check, using the fact that (\ref{ap0}) holds, that $W$
satisfies the following elliptic equation in $\Om$: \be \Delta
W+\frac{L_1}{|\nabla \psi|^2}W_{_X}+\frac{L_2}{|\nabla
\psi|^2}W_{_Y}=\lambda'(\psi)|\nabla\psi|^2+(2\lambda(\psi)+\gamma(\psi))\lambda(\psi),\label{ele}\ee
where \be L_1:=-2[W_{_X}-(2\lambda(\psi)+\gamma(\psi))\psix],\quad
L_2:=-2[W_{_Y}-(2\lambda(\psi)+\gamma(\psi))\psiy].\label{coff}\ee
Equation (\ref{ele}) is \cite[equation (5.17), p.\ 69]{Sp}, which is
correct, despite the fact that there is a  misprint in
\cite[equation (5.16), p.\ 69]{Sp}.  Note also that \be
W_{_Y}=\lambda(\psi)\psiy\quad\text{on }\mcb_F.\label{bco}\ee It is
immediate from (\ref{ele})-(\ref{bco}) that \begin{align} &\Delta
S+\frac{M_1}{|\nabla \psi|^2}S_{_X}+\frac{M_2}{|\nabla
\psi|^2}S_{_Y}\non\\&=
\lambda'(\psi)|\nabla\psi|^2+(2\lambda(\psi)+\gamma(\psi))\lambda(\psi)
+\frac{2g}{|\nabla\psi|^2}[g+(2\lambda(\psi)+\gamma(\psi))\psiy],\label{elef}
\end{align}
and \be S_{_Y}=g+\lambda(\psi)\psiy\quad\text{on
}\mcb_F,\label{bco1}\ee
where \begin{align} M_1:&=-2[S_{_X}-(2\lambda(\psi)+\gamma(\psi))\psix],\non\\
M_2:&=-2[S_{_Y}-2g-(2\lambda(\psi)+\gamma(\psi))\psiy].\label{bco2}\end{align}
Since $S=0$ on $\mcs$ and $\psiy<0$ in $\Om$, the maximum principle
shows that $S\leq 0$ in $\ovo$ whenever \be  \la(r)\leq
0,\,\,2\la(r)+\gamma(r)\leq 0,\,\, \la'(r)\geq 0\quad\text{for all
}r\in[0,B]. \ee In particular, $T[\psi]\leq 0$ in $\ovo$. This
completes the proof of Theorem \ref{bxv}.
\end{proof}

Let us also record here the following immediate consequence of
(\ref{ele})-(\ref{bco}) with $\la\equiv 0$.

\begin{proposition}\label{nqt}
Let $(\mcs,\mcb_F, \psi, Q)$ be a classical solution of {\rm
(\ref{apb})} such that $|\nabla\psi|\neq 0$ in $\Om\cup\mcb_F$. Then
\[ \min_\mcs |\nabla\psi|^2\leq
|\nabla\psi(X,Y))|^2+2\hg(\psi(X,Y))\leq\max_\mcs
|\nabla\psi|^2\quad\text{for all }(X,Y)\in\ovo.\]
\end{proposition}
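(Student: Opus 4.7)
The plan is to apply the maximum principle to $W := \frac{1}{2}|\nabla\psi|^2 + \hg(\psi)$, which is the function appearing in (\ref{ff}) when $\lambda\equiv 0$. Since $\psi = 0$ on $\mcs$ forces $\hg(\psi) = 0$ there, while $2W = |\nabla\psi|^2 + 2\hg(\psi)$ throughout $\ovo$, the proposition is equivalent to the statement that the extrema of $W$ on $\ovo$ are attained on $\mcs$.

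Specializing (\ref{ele}) and (\ref{bco}) to $\lambda\equiv 0$, the right-hand side of (\ref{ele}) vanishes identically and one obtains
\[ \Delta W + \frac{L_1}{|\nabla\psi|^2}W_{_X} + \frac{L_2}{|\nabla\psi|^2}W_{_Y} = 0\quad\text{in }\Om,\qquad W_{_Y} = 0\quad\text{on }\mcb_F,\]
with $L_1, L_2$ as in (\ref{coff}) (now with $\lambda\equiv 0$). This is a linear elliptic equation for $W$ with no zeroth-order term; since $|\nabla\psi|>0$ on $\Om\cup\mcb_F$ by hypothesis and $\nabla\psi$ is continuous on $\ovo$, the coefficients are bounded on every compact subset of $\Om\cup\mcb_F$.

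By $2L$-periodicity in $X$, I would restrict attention to the fundamental domain $\Om_0 := \Om\cap\{0<X<2L\}$, whose closure is compact and on which $W$ is continuous (by the classical regularity of $\psi$ up to $\mcs$ and $\mcb_F$). The strong maximum principle then precludes interior extrema of $W$ in $\Om_0$, while Hopf's boundary-point lemma, applied to the homogeneous Neumann condition $W_{_Y} = 0$ on $\mcb_F$ (where the outward unit normal is $(0,-1)$), precludes extrema on $\mcb_F$. Periodicity identifies the two vertical sides of $\partial\Om_0$, so the extrema of $W$ on $\ovo$ must be attained on $\mcs$; in view of the first paragraph, this yields the claimed double inequality.

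The only delicate point is verifying the hypotheses of the strong maximum principle and of Hopf's lemma in this setting, since the coefficients $L_i/|\nabla\psi|^2$ could in principle be unbounded. This reduces to boundedness of these coefficients on a neighborhood of each point of $\mcb_F$ and on compact subsets of $\Om_0$, and both follow at once from the hypothesis $|\nabla\psi|\neq 0$ on $\Om\cup\mcb_F$ together with the continuity of $\nabla\psi$.
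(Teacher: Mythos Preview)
Your argument is correct and is exactly the approach the paper has in mind: the proposition is stated immediately after \eqref{ele}--\eqref{bco} as a direct consequence of those identities with $\lambda\equiv 0$, i.e., of the fact that $W=\tfrac{1}{2}|\nabla\psi|^2+\hg(\psi)$ satisfies a homogeneous elliptic equation in $\Om$ together with $W_{_Y}=0$ on $\mcb_F$. You have simply written out the maximum-principle/Hopf-lemma reasoning that the paper leaves implicit.
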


\begin{remark} {\rm The estimate in Proposition \ref{nqt}
 holds with equalities for any solution of (\ref{apb})
  for which $\mcs$ is a horizontal line and $\psi$ does not depend on $X$.}
\end{remark}

\section{On the existence of extreme waves}

Let $(\mcs,\mcb_F, \psi,Q)$ be a weak solution of (\ref{apb}). We
say that a point $(X_0,Y_0)$ on $\mcs$ is a \emph{stagnation point}
if $Q-2gY_0=0$. This would formally correspond to the fact that
$\nabla\psi(X_0,Y_0)=(0,0)$. A weak solution of (\ref{apb}) with
stagnation points on the free surface $\mcs$ is called an
\emph{extreme wave}.

In view of Proposition \ref{tran}, there is no loss of generality in
considering only solutions of (\ref{apb}) for which $F=0$. In this
section we are interested in solutions $(\mcs, \mcb_0, \psi,Q)$ of
(\ref{apb}) for which
 \be \psi \text{ is even in the $X$ variable},\qquad\qquad \psiy< 0\text{ in }\Om.
 \label{ex2}\ee
and, in some situations, also
\begin{align}\mcs=\{(X,\eta(X)):&X\in \bdr\}, \text{with}\label{e1}\\ &\text{$\eta\in
C^1(\bdr)$, $2L$-periodic, even, and $\eta'< 0$ on
$(0,L)$.}\non\end{align}

The following result gives general conditions under which a sequence
of regular waves contains a subsequence converging in a certain
sense to an extreme wave. Here and in what follows, for any (weak)
solution $(\mcs,\mcb_0, \psi,Q)$ of (\ref{apb}), we extend $\psi$ to
$\bdr^2_+$ with the value $0$ in $\bdr^2_+\setminus\overline\Om$.
The extension, denoted also by $\psi$, is a Lipschitz function on
$\bdr^2_+$.

\begin{theorem}\label{exi}
 Let
$\{(\mcs_j,\mcb_0,\psi^j,Q_j)\}_{j\geq 1}$ be a sequence of
classical solutions of {\rm (\ref{apb})} for which {\rm (\ref{ex2})}
and { \rm (\ref{e1})} hold.  Suppose that \be \text{the sequence
$\{Q_j\}_{j\geq 1}$ is bounded above}\label{e6}. \ee Then there
exists a weak solution $(\widetilde\mcs,\mcb_0, \tips, \widetilde
Q)$ of {\rm(\ref{apb})} for which {\rm (\ref{ex2})} holds, such
that, along a subsequence (not relabeled),
\begin{align}
&Q_j\to\widetilde Q,\label{f1}\\
&\psi^j\to\tips\quad\text{uniformly on }\bdr^2_+,\label{f2}\\
&\nabla\psi^j\to\nabla\tips\quad\text{weak* in
}L^\infty(\bdr^2_+).\label{f3}\end{align}
 If, in addition,
\be\text{$|\nabla\psi^j(0,\eta_j(0))|\to 0$ as
$j\to\infty$},\label{e5}\ee then $(\widetilde\mcs,\mcb_0,\tips,
\widetilde Q)$ is an extreme wave.
\end{theorem}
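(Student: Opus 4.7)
The plan is to use compactness to extract a limit and then pass to the limit in the weak formulation \eqref{weak}; the main obstacle, as flagged in the introduction, will be recovery of the free-boundary condition in the absence of a uniform bound on the slopes of $\mcs_j$. First I would secure uniform bounds. Bernoulli's identity $|\nabla\psi^j|^2 = Q_j - 2gY \geq 0$ on $\mcs_j$ combined with \eqref{e6} bounds the crests $\eta_j(0)$ from above, and the troughs are bounded below by $0$. Theorem \ref{bxv} applied to each $\psi^j$, together with $\psi^j\in[0,B]$ and $\hg$ bounded on $[0,B]$, then yields a uniform $L^\infty$ bound on $|\nabla\psi^j|$ on $\overline{\Om_j}$, hence on $\bdr^2_+$ after extension by zero. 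Monotonicity of $\eta_j$ on $(0,L)$ also bounds the length of $\mcs_j$ per period by $2L + 2\sup_j \eta_j(0)$.

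Next I would extract subsequences so that $Q_j \to \widetilde Q$ (Bolzano--Weierstrass), $\psi^j \to \tips$ uniformly on $\bdr^2_+$ (Arzel\`a--Ascoli combined with $2L$-periodicity), $\nabla\psi^j \to \nabla\tips$ weak-$*$ in $L^\infty(\bdr^2_+)$ (Banach--Alaoglu), and $\eta_j \to \widetilde\eta$ pointwise on $[0,L]$ outside the at most countably many jumps of the non-increasing limit $\widetilde\eta$ (Helly). I would then define $\wids$ as the graph of $\widetilde\eta$ completed by a vertical segment at each jump and extended by evenness and periodicity, and $\wido$ as the open region between $\mcb_0$ and $\wids$. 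The completed curve is locally rectifiable from the length bound and, since $\widetilde\eta$ is monotone on $[0,L]$, is a non-self-intersecting Jordan arc per period. Uniform convergence transfers to the limit the boundary values $\tips=0$ on $\wids$ and $\tips=B$ on $\mcb_0$, the bounds $\tips\in[0,B]$, evenness in $X$, and equation \eqref{w0}; strict monotonicity $\tips_Y<0$ in $\wido$ follows from $\tips_Y\leq 0$ (weak-$*$ limit) combined with the strong maximum principle applied to \eqref{pareq}.

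The hard step is \eqref{weak}, because $\eta_j'$ has no uniform bound. The trick I would use is to trade the boundary integral for a domain integral via Propositions \ref{green} and \ref{noco}: for every $\zeta\in C_0^1(\mcu_0)$,
\[
-\int_{\mcs_j}(Q_j-2gY)^{1/2}\zeta\,d\mch^1 \;=\; \int_{\bdr^2_+}\nabla\psi^j\cdot\nabla\zeta\,d\mcl^2 \;-\; \int_{\Om_j}\gamma(\psi^j)\zeta\,d\mcl^2.
\]
The right-hand side converges to the analogous expression with $\tips$ and $\wido$: weak-$*$ convergence handles the first term, and dominated convergence handles the second (using $\chi_{\Om_j}\to\chi_{\wido}$ $\mcl^2$-a.e.\ from the pointwise convergence of $\eta_j$, together with the uniform convergence of $\psi^j$). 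To identify the limit of the left-hand side with $-\int_{\wids}(\widetilde Q-2gY)^{1/2}\zeta\,d\mch^1$, I would arclength-parametrise each $\mcs_j$ on a period, rescale to a common interval, and extract via Arzel\`a--Ascoli a subsequence of $1$-Lipschitz parametrisations converging uniformly to a $1$-Lipschitz parametrisation of $\wids$ (injectivity being preserved because each $\mcs_j$ is a monotone graph on $[0,L]$). This yields $\int_{\mcs_j}h\,d\mch^1\to\int_{\wids}h\,d\mch^1$ for every continuous $h$, completing the verification of \eqref{weak}.

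Finally, under \eqref{e5}, Bernoulli at the crest gives $Q_j-2g\eta_j(0)\to 0$; extracting a further subsequence with $\eta_j(0)\to h$ (possible since $\eta_j(0)\leq Q_j/(2g)$ is bounded) yields $h=\widetilde Q/(2g)$. Since $(0,h)$ lies on $\wids$ (as $(0,\widetilde\eta(0))$ when $\widetilde\eta$ is continuous at $0$, or as the top of the vertical completion at $X=0$ otherwise), the relation $\widetilde Q-2gh=0$ exhibits $(0,h)$ as a stagnation point of the limit, so $(\wids,\mcb_0,\tips,\widetilde Q)$ is an extreme wave.
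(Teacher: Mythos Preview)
Your compactness framework is right, and it largely parallels the paper's. The gap is in the sentence ``This yields $\int_{\mcs_j}h\,d\mch^1\to\int_{\wids}h\,d\mch^1$ for every continuous $h$.'' Uniform convergence of the rescaled arclength parametrisations $(u_j,v_j)\to(\tilde u,\tilde v)$ only gives $\tilde u'^2+\tilde v'^2\le\tilde\ell^2$ a.e.\ (weak-$*$ lower semicontinuity of the speed); it does \emph{not} give equality. If the tangent directions of $\mcs_j$ oscillate on small scales, the limit parametrisation can have speed strictly below $\tilde\ell$ on a set of positive measure, and then $\tilde\ell\int h(\tilde u,\tilde v)\,ds$ is strictly larger than $\int_{\wids}h\,d\mch^1$. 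Monotonicity of $u_j,v_j$ gives you $|\tilde u'|+|\tilde v'|\ge\tilde\ell$ a.e., but that is compatible with $\tilde u'^2+\tilde v'^2<\tilde\ell^2$. So the free-boundary term is not recovered by your argument.

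The paper closes this gap by a genuinely different mechanism that you have not used: it first passes to the limit keeping the possibly wrong weight $\tilde\ell$, obtaining
\[
\int_{\wido}\nabla\tips\cdot\nabla\zeta=\int_{\wido}\gamma(\tips)\zeta-\tilde\ell\int_{\bdr}(\widetilde Q-2g\tilde v)^{1/2}\zeta(\tilde u,\tilde v)\,ds,
\]
and independently writes Green's formula (Proposition~\ref{green}) with the correct arclength element $(\tilde u'^2+\tilde v'^2)^{1/2}$. Comparing the two yields
\[
-\frac{\partial\tips}{\partial n}\,(\tilde u'^2+\tilde v'^2)^{1/2}=(\widetilde Q-2g\tilde v)^{1/2}\tilde\ell\quad\text{a.e.}
\]
Now Theorem~\ref{bxv} enters a second time: from $T[\psi^j]\le 0$ and $C^2$-convergence on compacta one gets $T[\tips]\le 0$ in $\wido$, hence $|\nabla\tips|^2\le\widetilde Q-2gY$ on $\wids$, so $-\partial\tips/\partial n\le(\widetilde Q-2g\tilde v)^{1/2}$. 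Together with $\tilde u'^2+\tilde v'^2\le\tilde\ell^2$ this forces \emph{both} inequalities to be equalities, simultaneously recovering the arclength parametrisation and the boundary condition. In your outline Theorem~\ref{bxv} is used only for the uniform Lipschitz bound; its second, decisive, use is missing. A related omission is that injectivity of the extended limit curve at the crest and trough is not automatic (vertical segments there would be double-covered under the even/periodic extension); the paper rules these out by separate arguments, including a blow-up at the trough.
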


\begin{remark}{\rm The proof of Theorem \ref{exi} also provides a precise sense in which
 the sequence of curves $\{\mcs_j \}\indj$ converges along a subsequence to $\wids$. For the
 sake of brevity, we have chosen not to include this in the
 statement of the theorem.}
\end{remark}

\begin{remark}
{\rm  The proof of Theorem \ref{exi} would be simpler if it were
assumed that \[\text{the family }\{\eta_j\}_{j\geq 1}\text{ is
equi-Lipschitz on $\bdr$}.\] Such an assumption would probably be
difficult to verify in practice, so it is important that we do not
need it in Theorem \ref{exi}.}
\end{remark}

 Constantin and Strauss \cite[Theorem 1.1]{CoS} proved that, if
 $\gamma:[0,B]\to\bdr$ satisfies the condition
\be
\int_0^B\left[\frac{\pi^2(B-r)^2}{L^2}(2\hg_{\textnormal{max}}-2\hg(r))^{1/2}+
(2\hg_{\textnormal{max}}-2\hg(r))^{3/2}
\right]\,dr<gB^2,\label{jhb}\ee  where $\hg$ is given by (\ref{hg})
and $\hg_{\textnormal{max}}:=\max_{r\in[0,B]}\hg(r)$, then there
exists a set $\mcc$ (connected in a certain function space) of
solutions of (\ref{apb}) of the form $(\mcs,\mcb_0, \psi, Q)$,
satisfying
 (\ref{ex2}) and (\ref{e1}), which contains a sequence $\{(\mcs_j,\mcb_0,
 \psi^j,$ $Q_j)\}_{j\geq 1}$
  such that $\max_{\overline\Om_j}\psiy^j\to 0$ as $j\to\infty$.

The following new result concerning the convergence of a sequence of
regular waves in $\mcc$ to an extreme wave is easily obtained by
combining Theorem \ref{exi} with existing results in literature on
the validity of (\ref{e6}) and (\ref{e5}) for a sequence in~$\mcc$.

\begin{theorem}\label{zxc} Let $\gamma:[0,B]\to\bdr$ be such that {\rm (\ref{jhb})} holds,
and suppose in addition that \[ \text{$\gamma(0)<0$, $\gamma(r)\leq
0$ and $\gamma'(r)\geq 0$ for all $r\in[0,B]$.}\]
 Let $\{(\mcs_j,\mcb_0,\psi^j,Q_j)\}_{j\geq 1}$ be a sequence
in $\mcc$ such that \be \text{$\max_{\overline\Om_j}\psiy^j\to 0$ as
$j\to\infty$}.\label{vmz}\ee
 Then $\{(\mcs_j,\mcb_0,\psi^j,Q_j)\}_{j\geq 1}$ converges in
 the sense of Theorem \ref{exi}, along a subsequence, to an extreme
 wave $(\wids, \mcb_0,\tips, \widetilde Q)$.

\end{theorem}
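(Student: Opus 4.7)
The approach is to apply Theorem \ref{exi} to the given sequence $\{(\mcs_j,\mcb_0,\psi^j,Q_j)\}\indj$, so the task reduces to verifying that hypotheses (\ref{e6}) and (\ref{e5}) hold along a subsequence in $\mcc$.

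First I would verify (\ref{e6}), the uniform upper bound on $\{Q_j\}\indj$. This follows from the a priori estimates established in \cite{CoS}. Along the continuum $\mcc$, the Bernoulli condition $|\nabla\psi^j|^2+2gY=Q_j$ on $\mcs_j$, combined with the uniform upper bounds on the wave heights $\eta_j(0)$ and on $|\nabla\psi^j|$ obtained in \cite{CoS} from the maximum principle (using the hypothesis (\ref{jhb}) on $\gamma$), forces $\{Q_j\}\indj$ to be bounded above by a constant depending only on $g$, $B$, $L$ and $\gamma$. Proposition \ref{nqt} above gives an alternative route to the needed control of $|\nabla\psi^j|$ on $\mcs_j$ in terms of the boundary data.

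Next I would verify (\ref{e5}), that $|\nabla\psi^j(0,\eta_j(0))|\to 0$ as $j\to\infty$. This is precisely where the sign assumptions $\gamma(0)<0$, $\gamma\leq 0$, $\gamma'\geq 0$ on $[0,B]$ become decisive. Under these hypotheses the quantity $\varpi$ in Theorem \ref{bxv} vanishes, so $R[\psi^j]=T[\psi^j]\leq 0$ throughout $\overline\Om_j$. Sharpening this pressure estimate, the analysis in \cite{EV4} yields a pointwise inequality controlling $|\nabla\psi^j(0,\eta_j(0))|$ by a quantity comparable to $\max_{\overline\Om_j}\psi^j_Y$; the hypothesis (\ref{vmz}) then gives (\ref{e5}) at once. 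The strict inequality $\gamma(0)<0$ is what prevents the bound from degenerating and is thus essential here.

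With both (\ref{e6}) and (\ref{e5}) in hand, Theorem \ref{exi} supplies, along a subsequence, a weak solution $(\wids,\mcb_0,\tips,\widetilde Q)$ of (\ref{apb}) satisfying (\ref{ex2}) and towards which the sequence converges in the senses (\ref{f1})--(\ref{f3}); the strengthened hypothesis (\ref{e5}) furthermore ensures that the limit is an extreme wave. The main substantive work is entirely upstream, in Theorem \ref{exi} itself and in the estimates of \cite{CoS, EV4}, so the only real task at the level of Theorem \ref{zxc} is to assemble these ingredients and check that the sign hypotheses on $\gamma$ activate both sets of estimates simultaneously; there is no serious obstacle here beyond bookkeeping.
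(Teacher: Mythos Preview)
Your overall plan (verify (\ref{e6}) and (\ref{e5}) and then invoke Theorem \ref{exi}) is exactly right, but you have the roles of the sign hypotheses on $\gamma$ reversed, and this matters.

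For (\ref{e5}), the paper does not use $\gamma(0)<0$ at all. Under $\gamma\leq 0$ and $\gamma'\geq 0$ on $[0,B]$, \cite[Theorem 2.3]{EV4} shows that for any solution in $\mcc$ the maximum of $\psiy$ over $\overline\Om$ is attained precisely at the crest $(0,\eta(0))$, so (\ref{vmz}) gives (\ref{e5}) immediately. There is no ``degeneration'' issue and no pointwise comparison inequality of the kind you describe; your claim that $\gamma(0)<0$ is essential for (\ref{e5}) is incorrect.

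For (\ref{e6}), your argument is too vague and is where $\gamma(0)<0$ actually enters. Appealing to the Bernoulli relation together with ``uniform bounds on $\eta_j(0)$ and $|\nabla\psi^j|$ from \cite{CoS}'' is close to circular: on $\mcs_j$ one has $|\nabla\psi^j|^2=Q_j-2gY$, so a bound on $|\nabla\psi^j|$ along $\mcs_j$ already presupposes a bound on $Q_j$, and Proposition \ref{nqt} does not break this circle. The paper instead uses the estimate from \cite[Proof of Lemma 7.1]{CoS}: with $f(\lambda)=\lambda+2g\int_0^B(\lambda-2\hg(r))^{-1/2}\,dr$ on $(2\hg_{\max},\infty)$, one has $\psiy^2(0,\eta(0))<\lambda_0$ (the critical point of $f$) and $Q<f(\psiy^2(0,\eta(0)))$. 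Since $\gamma\leq 0$ gives $\hg_{\max}=0$, the needed uniform bound on $Q$ follows provided $f$ is bounded on $(0,\lambda_0)$, and it is precisely $\gamma(0)<0$ that guarantees $\int_0^B(-2\hg(r))^{-1/2}\,dr<\infty$ and hence that $f$ stays bounded as $\lambda\searrow 0$. The paper itself remarks that dropping $\gamma(0)<0$ leaves (\ref{e6}) open; so your assertion that (\ref{e6}) comes merely from (\ref{jhb}) and generic maximum-principle bounds is a genuine gap.
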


\begin{remark}\label{twq}{\rm
Let $\gamma:[0,B]\to\bdr$ be such that $\gamma(0)<0$ and
$\gamma(r)\leq 0$ for all $r\in[0, B]$. Let $\Upsilon:[0,B]\to\bdr$
be given by
\[\Upsilon(r)=\int_0^r\frac{1}{{(-2\hg(t))^{1/2}}}\,dt\quad\text{for all }r\in[0,B],\]
where the function $\hg$ is given by (\ref{hg}). Then $\Upsilon$ is
a bijection from $[0,B]$ onto $[0,\Upsilon(B)]$, with inverse
$\Upsilon^{-1}:[0,\Upsilon(B)]\to[0,B]$. Let $\mcs:=\{(X,\Upsilon
(B)):X\in\bdr\}$ and $\Om$ be the strip whose boundary consists of
$\mcs$ and $\mcb_0$. Let $Q:=2g\Upsilon(B)$ and $\psi:\Om\to\bdr$ be
given by
\[\psi(X,Y):=\Upsilon^{-1}(\Upsilon(B)-Y)\quad\text{for all }(X,Y)\in\Om.\]
It is easy to check that $(\mcs,\mcb_0, \psi, Q)$ is a solution of
(\ref{apb}) for which all the points of $\mcs$ are stagnation
points. We call such a solution of (\ref{apb}) a \emph{trivial
extreme wave}.}
\end{remark}

\begin{remark} {\rm It is not known whether the extreme wave obtained in
Theorem \ref{zxc} is trivial or not. It is natural to conjecture
that the conclusion of Theorem \ref{zxc} remains valid in the
absence of the condition $\gamma(0)<0$, but the present method of
proof of Theorem \ref{zxc} cannot handle this more general
situation. The difficulty is to prove the validity of (\ref{e6}) for
a suitable sequence in $\mcc$. This fact \emph{can} be proved in the
irrotational case, thus leading to the existence of a
\emph{nontrivial} extreme wave, but the only proof we know makes use
of Nekrasov's integral equation, and because this method cannot be
used for rotational waves we refrain from giving any details here.}
\end{remark}

We now give the proof of Theorem \ref{exi}, and then that of Theorem
\ref{zxc}.

\begin{proof}[Proof of Theorem \ref{exi}]

Let $\{(\mcs_j, \mcb_0, \psi^j,Q_j)\}_{j\geq 1}$ be as in the
statement of the theorem, with $\mcs_j=\{(X,\eta_j(X)):X\in\bdr\}$
for all $j\geq 1$.  The condition (\ref{e6}) means that
\begin{gather} \text{the sequence $\{\max_{\mcs_j}|\nabla\psi^j|\}_{j\geq 1}$
is uniformly bounded above,}\label{moiu} \\\text{the sequence
$\{\eta_j\}_{j\geq 1}$ is uniformly bounded
above.}\label{oiu}\end{gather} It follows from (\ref{moiu}) and
Proposition \ref{nqt} that \be\text{the family $\{\psi_j\}_{j\geq
1}$ is equi-Lipschitz on $\bdr^2_+$.}\label{eqil}\ee We deduce from
(\ref{eqil}), using (\ref{e1}) and the relation
\[-B=\psi^j(L,\eta_j(L))-\psi^j(L,0)=\int_0^{\eta_j(L)}\psiy(L,V)\,dV,\]
that \be\text{the sequence $\{\eta_j\}_{j\geq 1}$ is uniformly
bounded away from $0$.}\label{poiu}\ee This implies that
\be\text{$\{Q_j\}\indj$ is bounded away from $0$.}\ee

Let $\ell_j$ denote the length of the arc
$\{(X,\eta_j(X)):X\in[0,L]\}$, for all $j\geq 1$. It follows from
(\ref{e1}) and (\ref{oiu}) that \be\text{$\{\ell_j\}_{j\geq 1}$ is
bounded above and away from $0$.}\label{bdl}\ee For any $j\geq 1$, a
parametrization of the curve $\mcs_j$ is given by
$\mcs_j=\{(u_j(s),v_j(s)):s\in\bdr\}$, where $u_j,\,
v_j:\bdr\to\bdr$ are $C^1$ functions, periodic of period $1$, such
that
\begin{gather}
\text{ $u_j(0)=0$, $v_j(0)=\eta_j(0)$,\quad $u_j(1)=L$,
$v_j(1)=\eta_j(L),$}\label{ep}\\
u_j\text{ is odd,}\qquad v_j\text{ is even,}\\
 u_j'(s)\geq 0\text{ for all } s\in\bdr,\qquad
v_j'(s)\leq 0\text{ for all
}s\in[0,1],\label{vj}\\u_j'(s)^2+v_j'(s)^2=\ell_j^2\quad\text{for
all }s\in\bdr.\label{arcl}\end{gather}

It follows from (\ref{e6}) and (\ref{oiu})-(\ref{arcl}) that there
exist constants $\widetilde Q>0$, $\tilde\ell>0$, and functions
$\tips\in\Lip(\bdr^2_+)$ and $\tilde u,\,\tilde v\in\Lip(\bdr)$,
with $\tilde u,\,\tilde v$ periodic of period $1$, such that, along
a subsequence (not relabeled), (\ref{f1})-(\ref{f3}) hold and
\begin{align}
& \ell_j\to \tilde\ell,\label{f11}\\
&u_j\to\tilde u,\, v_j\to\tilde v \quad\text{uniformly on }\bdr,\label{f4}\\
&u_j'\to\tilde u',\, v_j'\to\tilde v'\quad\text{weak* in
}L^\infty(\bdr).\label{f5}
\end{align}
It is immediate from (\ref{ep})-(\ref{arcl}) that \begin{gather}
\text{$\tilde u(0)=0$,\quad $\tilde u(1)=L$,}\label{nb}\\
\tilde u\text{ is odd,}\qquad \tilde v\text{ is even,}\\
\tilde u'(s)\geq 0\text{ for a.e.\ }s\in\bdr,\qquad\tilde v'(s)\leq
0\text{ for a.e.\ }s\in(0,1),\label{axz}\\\quad \tilde
u'(s)^2+\tilde v'(s)^2\leq \tilde\ell^2\quad\text{for almost every
}s\in\bdr.\label{papa}\end{gather} It is also a consequence of
(\ref{vj}) and (\ref{arcl})
 that, for all $j\geq 1$ and for every $a,\,b\in[0,1]$
with $a<b$, \[
(b-a)\ell_j=\int_a^b(u_j'(s)^2+v_j'(s)^2)^{1/2}\,ds\leq|u_j(b)-u_j(a)|+|v_j(b)-v_j(a)|.\]
This implies that \be (b-a)\tilde\ell\leq|\tilde u(b)-\tilde
u(a)|+|\tilde v(b)-\tilde v(a)|.\label{taa}\ee Therefore,
\begin{gather}\text{the mapping $s\mapsto(\tilde u(s),\tilde v(s))$ is
injective on $[0,1]$,}\label{lij}\\
\tilde\ell\leq|\tilde u'(s)|+|\tilde v'(s)|\quad\text{for almost
every }s\in(0,1).\label{nond}
\end{gather}

We would now like to prove that \be\text{the mapping
$s\mapsto(\tilde u(s),\tilde v(s))$ is injective on
$\bdr$.}\label{fbi}\ee
 Let $\sigma\in [0,1)$ and
$\varsigma\in(0,1]$, with $\sigma<\varsigma$, be given by
\bese\label{epr}\begin{align}\sigma:&=\max\{s\in[0,1]: \tilde
u(s)=0\},\\\varsigma:&=\min\{s\in[0,1]: \tilde
u(s)=L\}.\end{align}\ese To prove (\ref{fbi}) it suffices, in view
of (\ref{lij}) and (\ref{nb})-(\ref{axz}), to show that $\sigma=0$
and $\varsigma=1$. Note from (\ref{axz}), (\ref{papa}), (\ref{nond})
and (\ref{epr}) that \be \tilde u'(s)=0,\quad \tilde
v'(s)=-\tilde\ell\quad\text{for almost every }s\in
 (0,\sigma)\cup (\varsigma,1).\label{dera}\ee
Let $I:=\cup_{n\in\mathbb{Z}}(2n-\sigma,2n+\sigma)$,
$J:=\cup_{n\in\mathbb{Z}}(2n+1-\varsigma,2n+1+\varsigma)$, and
\begin{gather}\mathcal{I}:=\{(\tilde
u(s),\tilde v(s)):s\in I\},\quad\mathcal{J}:=\{(\tilde u(s),\tilde
v(s)):s\in J\},\non\\\wids:=\{(\tilde u(s),\tilde
v(s)):s\in\bdr\setminus (I\cup J)\}.\non
\end{gather}
Then each of $\mathcal{I}$ and $\mathcal{J}$ is either empty or a
countable union of half-open vertical segments, while $\wids$ is a
locally rectifiable curve, $2L$-periodic in the horizontal direction
and symmetric.  Let $\wido$ be the domain whose boundary consists of
$\widetilde\mcs$ and $\mcb_0$. We first show that $\mathcal{I}$ and
$\mathcal{J}$ are empty, and then that $(\widetilde\mcs, \mcb_0,
\tips, \widetilde Q)$ is a weak solution of (\ref{apb}).

It is immediate from (\ref{f4}) that, for any compact set
$\mathcal{K}\subset\bdr^2$,
\begin{gather}
\text{$\mathcal{K}\subset (\wido\setminus \mathcal{J})\cup\mcb$
implies $\mathcal{K}\subset\Om_j\cup\mcb$ for all $j$ sufficiently large,}\label{co1}\\
\begin{aligned}\text{$\mathcal{K}\subset
\bdr^2_+\setminus(\wido\cup\wids\cup\mathcal{I})$}&\text{ implies}\label{co2}\\
&\text{ $\mathcal{K}\subset\bdr^2_+\setminus(\Om_j\cup\mcs_j)$ for
all $j$ sufficiently large.}\end{aligned}
\end{gather}
It is obvious that $0\leq\tips\leq B$ in $\wido$ and that $\tips=B$
on $\mcb_0$. Also, it follows from (\ref{co2}) that $\tips=0$ in
$\bdr^2_+\setminus(\wido\cup\wids\cup\mathcal{I})$, and hence,
 using the continuity of $\tips$ on $\bdr^2_+$, that $\tips=0$ on
$\bdr^2_+\setminus\wido$. Now, for every $j\geq 1$, (\ref{weak}) can
be written in the form
\begin{align}\int_{\Om_j}\nabla\psi^j\nabla\zeta\,d\mcl^2=\int_{\Om_j}\gamma(\psi^j)\zeta\,d\mcl^2-
\int_\bdr(Q_j&-2gv_j(s))^{1/2}\zeta(u_j(s),v_j(s))\ell_j\,ds\non\\&\text{
for all }\zeta\in C^1_0(\bdr^2_+).\label{wej}\end{align} The
validity of (\ref{f1})-(\ref{f3}), (\ref{f11}), (\ref{f4}),
(\ref{co1}) and (\ref{co2}) makes it possible to pass to the limit
as $j\to\infty$ in (\ref{wej}), to obtain
\begin{align}\int_{\wido}\nabla\tips\nabla\zeta\,d\mcl^2=
\int_{\wido}\gamma(\tips)\zeta\,d\mcl^2- \int_\bdr(\widetilde
Q&-2g\tilde v(s))^{1/2}\zeta(\tilde u(s),\tilde
v(s))\tilde\ell\,ds\non\\&\text{ for all }\zeta\in
C^1_0(\bdr^2_+).\label{weil}\end{align}

With $\sigma$ defined in (\ref{epr}), we now claim that
 $\sigma=0$. Suppose for a contradiction that
this is not so. Let $\mathcal{\mcd}$ be the disc centred at $(0,
\tilde v(0))$
 and with the point $(0, \tilde v(\sigma))$ on its boundary.
 It follows from (\ref{weil}) and (\ref{dera}) that
 \[ \int_{\tilde v(\sigma)}^{\tilde v(0)}(\widetilde Q-2gY)^{1/2}\zeta(0,Y)\,dY=0\quad\text{for
 all $\zeta\in C^1_0(\mathcal{\mcd})$}.\]
Since this is clearly not possible, it follows that $\sigma=0$.

With $\varsigma$ defined in (\ref{epr}), we now claim that
$\varsigma=1$.
 Suppose for a
contradiction that this is not so. Let \[\mathcal{R}:=\{(X,Y)\in
\bdr^2:-L<X<3L, 0<Y<\tilde v(\varsigma)\},\] and let
$\mathcal{R}_-:=\{(X,Y)\in\mathcal{R}: X<L\}$,
$\mathcal{R}_+:=\{(X,Y)\in\mathcal{R}: X>L\}$ and
$\mathcal{R}_L:=\{(X,Y)\in\mathcal{R}: X=L\}$.
 It follows from
(\ref{weil}) and (\ref{dera}) that, for all $\zeta\in \Lip_0(\mcr)$,
\[\int_{\mcr}\nabla\tips\nabla\zeta\,d\mcl^2=
\int_{\mcr}\gamma(\tips)\zeta\,d\mcl^2- 2\int_{\tilde v(1)}^{\tilde
v(\varsigma)}(\widetilde Q-2gY)^{1/2}\zeta(L,Y)\,dY.\] Let
$M:=\tilde v(1)$ and $N:=\tilde v(\varsigma)$. Since $\tips$ is even
with respect to the line $X=L$, it follows that
\begin{align}\int_{\mcr_-}\nabla\tips\nabla\zeta\,d\mcl^2=
\int_{\mcr_-}\gamma(\tips)\zeta\,d\mcl^2- \int_{M}^{N}(\widetilde
Q-2gY)^{1/2}\zeta(L,Y)\,dY,\label{grea}\\\text{for all
$\zeta\in\Lip(\mcr_-)$ with $\zeta=0$ on
$(\partial\mcr_-)\setminus\mcr_L$.}\non\end{align}  To show that
this is not possible, we use a blow-up argument. Let
$\{\veps_k\}_{k\geq 1}$ be a sequence with $\veps_k\searrow 0$ as
$k\to\infty$. For any $k\geq 1$, let $\tips^k:\mcr_-\to\bdr$ be
given by
\[\tips^k(X,Y):=\frac{1}{\veps_k}\tips (L+\veps_k(X-L), M+\veps_k(Y-M))
\quad\text{for all }(X,Y)\in\mcr_-.\] Let $\zeta\in\Lip(\mcr_-)$
with $\zeta=0$ on $(\partial\mcr_-)\setminus\mcr_L$. We extend
$\zeta$ to a Lipschitz function in $\{(X,Y):X<L, Y\in\bdr\}$, with
the value $0$ outside of $\mcr_-$. By applying (\ref{grea}) to the
function $\zeta^k:\mcr_-\to\bdr$ given by
\[\zeta^k(X,Y):=\zeta\left(L+\frac{1}{\veps_k}(X-L), M+\frac{1}{\veps_k}(Y-M)\right)
\quad\text{for all }(X,Y)\in\mcr_-,\] we deduce, after a change of
variables in the integrals, that
\begin{align}\int_{\mcr_-}\nabla\tips^k\nabla\zeta\,d\mcl^2=
\int_{\mcr_-}&\veps_k\gamma(\veps_k\tips^k)\zeta\,d\mcl^2\label{yvc}\\&-
\int_{M}^{N}(\widetilde
Q-2gM-2g\veps_k(Y-M))^{1/2}\zeta(L,Y)\,dY.\non\end{align} Since the
family $\{\tips^k\}_{k\geq 1}$ is equi-Lipschitz on $\mcr_-$, there
exists a function $\hat\psi\in\Lip(\mcr_-)$ such that, along a
subsequence (not relabeled),
\begin{align}
&\tips^k\to\hat\psi\quad\text{uniformly on }\mcr_-,\non\\
&\nabla\tips^k\to\nabla\hat\psi\quad\text{weak* in
}L^\infty(\mcr_-).\non\end{align} Since $\tips=0$ on $\mathcal{J}$,
it follows that $\hat\psi=0$ on $\mcr_L\cap\mathcal{J}$. Also, by
passing to the limit as $k\to\infty$ in (\ref{yvc}), we conclude
that \begin{align}\int_{\mcr_-}\nabla\hat\psi\nabla\zeta\,d\mcl^2&=
- \int_{M}^{N}(\widetilde
Q-2gM)^{1/2}\zeta(L,Y)\,dY\label{yvc1}\\&\text{for all
$\zeta\in\Lip(\mcr_-)$ with $\zeta=0$ on
$(\partial\mcr_-)\setminus\mcr_L$}.\non\end{align} This shows in
particular that $\hat\psi$ is a harmonic function in $\mcr_-$. Let $
\mathcal{J}_0:=\{(\tilde u(s), \tilde v(s)):s\in
J\setminus\mathbb{Z}\}$. Since $\hat\psi=0$ on
$\mcr_L\cap\mathcal{J}_0$, the Reflection Principle shows that
$\hat\psi$ can be extended as a harmonic function, odd with respect
to the line $X=L$, in
$\mcr_-\cup(\mcr_L\cap\mathcal{J}_0)\cup\mcr_+$. Let the extension
be denoted also by $\hat\psi$. Then the holomorphic function
$f:=\hat\psi_{_X}-i\hat\psi_{_Y}$ in
$\mcr_-\cup(\mcr_L\cap\mathcal{J}_0)\cup\mcr_+$ satisfies
$f=-(\widetilde Q-2gM)^{1/2}$ on $\mcr_L\cap\mathcal{J}_0$. Since
any holomorphic function on a connected domain is uniquely
determined by its values on any set which has a limit point in that
domain \cite[Theorem 10.18]{Ru}, it follows that
$f(X+iY)=-(\widetilde Q-2gM)^{1/2}$ for all $(X,Y)\in
\mcr_-\cup(\mcr_L\cap\mathcal{J}_0)\cup\mcr_+$. Hence necessarily
$\hat\psi(X,Y)=-(\widetilde Q-2gM)^{1/2}(X-L)$ for all
$(X,Y)\in\mcr_-$. But this contradicts (\ref{yvc1}), since
$\widetilde Q-2gM>0$ and $0<M<N$. This shows that $\varsigma=1$.

Since $\sigma=0$ and $\varsigma=1$, it has been therefore proved
that (\ref{fbi}) holds, $\mathcal{I}$ and $\mathcal{J}$ are empty,
and that $\wids=\{(\tilde u(s),\tilde v(s)):s\in\bdr\}$. Note now
from (\ref{co1}) that, for any compact set
$\mathcal{K}\subset\bdr^2$, \be \text{$\mathcal{K}\subset
\wido\cup\mcb_0$ implies
 $\mathcal{K}\subset\Om_j\cup\mcb$ for all $j$ sufficiently large.}\label{co3}\ee
It is a consequence of (\ref{weil}) that
\[\int_{\wido}\nabla\tips\nabla\zeta\,d\mcl^2=\int_{\wido}
\gamma(\tips)\zeta\,d\mcl^2\label{tiw}\quad\text{for all }\zeta\in
C^1_0(\wido).\] It follows that $\tips\in
C^{3,\alpha}_\loc(\wido\cup\mcb)$ satisfies
\be\Delta\tips=-\gamma(\tips)\quad\text{in }\wido.\label{yte}\ee
Since $\tips\in\Lip(\bdr^2_+)$, Proposition \ref{weha} ensures that
the partial derivatives of $\tips$ have non-tangential limits
$\mch^1$-almost everywhere on $\widetilde\mcs$. Taking into account
(\ref{nond}), we write (\ref{gree}) in the form
\begin{align}\int_{\wido}\nabla\tips\nabla\zeta\,d\mcl^2=
&\int_{\wido}\gamma(\tips)\zeta\,d\mcl^2\non\\&+
\int_{\bdr}\frac{\partial\tips}{\partial n}(\tilde u(s),\tilde
v(s))\zeta(\tilde u(s),\tilde v(s))(\tilde u'(s)^2+\tilde
v'(s)^2)^{1/2}\,ds\non\\&\qquad\qquad\qquad\qquad\qquad\text{ for
all }\zeta\in C^1_0(\bdr^2_+).\label{wewe}\end{align} By comparing
(\ref{weil}) and (\ref{wewe}), we deduce that \be
-\frac{\partial\tips} {\partial n}(\tilde u(s),\tilde v(s))(\tilde
u'(s)^2+\tilde v'(s)^2)^{1/2}=(\widetilde Q-2g\tilde
v(s))^{1/2}\tilde\ell\quad\text{for a.e.\ }s\in\bdr.\label{rela}\ee

Note now that, in view of (\ref{eqil}), there is no loss of
generality in
 assuming that
 \[ \psi^j\to\tips\quad\text{in }C^{0,\alpha}(\bdr^2_+).\]
 This implies that
 \be \gamma(\psi^j)\to\gamma(\tips)\quad\text{in }C^{0,\alpha}(\bdr^2_+).\label{cq1}\ee
  Since (\ref{yte}), (\ref{co3}) and (\ref{cq1}) hold, standard elliptic estimates
\cite[Theorem 4.6 and Theorem 4.11]{GT} show that \be\psi^j\to
\tips\quad\text{in }C^{2,\alpha}_\loc(\wido\cup\mcb).\label{po1}\ee
Now, Theorem \ref{bxv} shows that \be\label{oyt}
\text{$T[\psi^j]\leq 0$ in $\Om_j$ for all $j\geq1$.} \ee We deduce
from (\ref{po1}) and (\ref{oyt}) that
\be\label{tre}\text{$T[\tips]\leq 0$ in $\wido$.} \ee
 It follows from (\ref{tre}) that, in
the notation of (\ref{gra}), \be |\nabla\tips(X,Y)|^2+2gY-\widetilde
Q\leq 0\quad\text{ for $\mch^1$-almost every
$(X,Y)\in\widetilde\mcs$}.\label{qm2}\ee Since $\tips=0$ on $\wids$,
it follows, by using (\ref{qm2}) and Lemma \ref{tg}, upon taking
into account (\ref{nond}), that
 \be0\leq -\frac{\partial\tips}{\partial n}(\tilde u(s),\tilde v(s))\leq
 (\widetilde Q-2g\tilde v(s))^{1/2}\quad\text{ for almost every }s\in\bdr.
 \label{ipe}\ee   It
follows from (\ref{rela}), (\ref{papa}) and (\ref{ipe}) that
\begin{gather}\tilde u'(s)^2+\tilde
v'(s)^2=\tilde\ell^2\quad\text{for almost every
}s\in\bdr,\non\\\frac{\partial\tips}{\partial n}(X,Y)=
 -(\widetilde Q-2gY)^{1/2}\quad\text{ for $\mch^1$-almost every
 $(X,Y)\in\widetilde\mcs$}.\non\end{gather} This completes the proof of the fact
that $(\wids, \mcb_0, \tips, \widetilde Q)$ is a weak solution of
(\ref{apb}).

We now recall for easy reference the following version of the
maximum principle \cite[Lemma 1, p.\ 519]{Eva}, in which we
emphasize that there is no assumption on the sign of the coefficient
$c:\mcg\to\bdr$.

\begin{proposition}\label{maxp} Let $\mcg\subset\bdr^n$, where $n\geq 1$, be a connected
open set. Let $c\in L^{\infty}(\mcg)$ and $w\in C^2(\mcg)$ with
$w\geq 0$ in $\mcg$ be such that
\[\Delta w+cw\leq 0\quad\text{in }\mcg.\]
Then either $w\equiv 0$ in $\mcg$, or $w>0$ in $\mcg$.
\end{proposition}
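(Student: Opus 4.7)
The plan is to reduce the proposition to the standard strong minimum principle for operators with nonpositive zeroth-order coefficient, which is where the sign hypothesis usually enters. The key observation is that the nonnegativity of $w$ lets us absorb the positive part of $c$ into the inequality.

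Write $c = c^+ - c^-$, where $c^+ := \max\{c,0\}$ and $c^- := \max\{-c,0\}$, both in $L^\infty(\mcg)$. Since $w \geq 0$ in $\mcg$ we have $c^+ w \geq 0$, so the inequality $\Delta w + cw \leq 0$ yields
\begin{equation}
\Delta w - c^- w \leq -c^+ w \leq 0 \quad \text{in } \mcg. \nonumber
\end{equation}
Thus $w$ is a nonnegative $C^2$ supersolution of the operator $L := \Delta - c^-$ whose zeroth-order coefficient $-c^-$ is now nonpositive.

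At this stage I would invoke the classical strong minimum principle (see for example Gilbarg--Trudinger, Theorem 3.5 applied to $-w$): if $L w \leq 0$ in a connected open set $\mcg$ and $w$ attains a nonpositive minimum at some interior point of $\mcg$, then $w$ is constant in $\mcg$. Since $w \geq 0$ throughout $\mcg$, a point $x_0 \in \mcg$ with $w(x_0)=0$ is an interior minimum with nonpositive value, so $w$ must be identically constant, hence $w \equiv 0$. If no such $x_0$ exists, then $w > 0$ in $\mcg$ by definition, and the dichotomy is established.

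The only subtlety is a bookkeeping one: the classical statement one usually finds is phrased for subsolutions attaining a nonnegative maximum. Passing to $-w$ gives a subsolution of $L$ attaining the nonpositive maximum $0$ at $x_0$, and the argument (Hopf's boundary point lemma combined with a connectedness argument on the set $\{w=0\}$) goes through unchanged because it only uses that the coefficient of the zeroth-order term is nonpositive, together with local boundedness of all coefficients; both are satisfied here since $c^- \in L^\infty(\mcg)$. No new calculation is required once the reduction above is in place, and I would expect the write-up to be just a few lines.
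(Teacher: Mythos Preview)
Your argument is correct: splitting $c=c^{+}-c^{-}$ and using $w\geq 0$ to drop the $c^{+}w$ term reduces the claim to the classical strong minimum principle for $\Delta-c^{-}$, after which the dichotomy follows immediately. The paper does not actually supply a proof of this proposition; it simply quotes the result from Evans' textbook (Lemma~1, p.~519 in \cite{Eva}), so there is nothing to compare against beyond noting that your reduction is exactly the standard one used to handle a zeroth-order coefficient of arbitrary sign in the presence of a sign condition on $w$.
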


Since $\psiy^j<0$ in $\Om_j$ for all $j\geq1$, it follows that
$\tips_{_{Y}}\leq 0$ everywhere in $\wido$. Since \be
\Delta\tips_{_{Y}}= -\gamma'(\tips)\tips_{_{Y}}\quad\text{in
}\wido,\ee Proposition \ref{maxp} shows that $\tips_{_{Y}}< 0$ in
$\wido$. As $\tips$ is clearly even in the $X$ variable, it follows
that (\ref{ex2}) holds.

If, in addition, (\ref{e5}) holds, then obviously $\widetilde
Q-2g\tilde v(0)=0$, so that  $(\wids,\mcb_0,\tips,$\\ $\widetilde
Q)$ is an extreme wave. This completes the proof of Theorem
\ref{exi}.
\end{proof}

\begin{proof}[Proof of Theorem \ref{zxc}]
Any solution $(\mcs,\mcb_0,\psi, Q)$ of (\ref{apb}) which belongs to
$\mcc$ has the properties (\ref{ex2}) and (\ref{e1}). It has been
proved in \cite[Theorem 2.3]{EV4}, improving on an earlier result in
\cite{CoS1}, that, if $\gamma(r)\leq 0$ and $\gamma'(r)\geq 0$ for
all $r\in[0,B]$, then any solution $(\mcs,\mcb_0,\psi, Q)$ of
(\ref{apb}) with the properties (\ref{ex2}) and (\ref{e1}) satisfies
\[\max_{\overline\Om}\psiy=\psiy(0,\eta(0)),\] where
$\mcs=\{(X,\eta(X)):X\in\bdr\}$ and $\Om$ is the domain whose
boundary consists of $\mcs$ and $\mcb_0$. Hence (\ref{e5}) follows
from (\ref{vmz}).

The fact that $Q$ is bounded above along $\mcc$ whenever
$\gamma(0)<0$ and $\gamma(r)\leq 0$ for all $r\in[0,B]$ is an
immediate consequence of an estimate in \cite[Proof of Lemma
7.1]{CoS}. I am grateful to Adrian Constantin for pointing out this
to me. Let $f:(2\hg_{\textnormal{max}},\infty)\to\bdr$ be given by
\[ f(\lambda)=\lambda+2g\int_0^B(\lambda-2\hg(r))^{-1/2}\,dr\quad\text{for all }
\la\in(2\hg_{\textnormal{max}},\infty),\] and let
$(\mcs,\mcb_0,\psi, Q)$ be a solution of (\ref{apb}) which belongs
to $\mcc$. It is proven there that $\psiy^2(0,\eta(0))<\la_0$, where
$\la_0$ is the unique solution in $(2\hg_{\textnormal{max}},\infty)$
of the equation $f'(\lambda)=0$.
 It is also proven there that, if
$\psiy^2(0,\eta(0))>2\hg_{\textnormal{max}}$, then \be Q<
f(\psiy^2(0,\eta(0))).\label{beQ}\ee If $\gamma(r)\leq 0$ for all
$r\in[0,B]$, then $\hg_{\textnormal{max}}=0$. Since the restriction
of $f$ to the interval $(0,\la_0)$ is bounded above whenever
$\gamma(0)<0$ and $\gamma(r)\leq 0$ for all $r\in[0,B]$, it follows
from (\ref{beQ}) that $Q$ is bounded above along $\mcc$ in that
case. Therefore (\ref{e6}) holds.

The required conclusion follows from Theorem \ref{exi}. This
completes the proof of Theorem \ref{zxc}.

\end{proof}

\section{On the Stokes conjecture}

In this section we study the shape of the profile of an extreme wave
in a neighbourhood of a stagnation point. In view of Proposition
\ref{tran}, there is no loss of generality in considering only
extreme waves for which $Q=0$. With the origin a stagnation point,
we are interested in the shape of $\mcs$ close to the origin.

Let $(\mcs,\mcb_F, \psi, 0)$ be an extreme wave, where $F<0$, such
that \be\psiy<0\quad\text{in }\Om,\label{kel}\ee and
$\mcs=\{(u(s),v(s)):s\in\bdr\}$, where \bese\label{arfa}
\begin{align}&\text{the mapping $s\mapsto (u(s),v(s))$ is
injective,}\label{i00}\\ &u(0)=v(0)=0, \label{i0}\\ &\text{$s\mapsto
u(s)$ is
 nondecreasing on $\bdr$, \label{ia}}\\&\text{there exist $d,\,e\in\bdr$
 with $d<0<e$ such that $s\mapsto v(s)$ is}\non\\
&\qquad\qquad\qquad\text{nondecreasing on $[d,0]$ and nonincreasing
on $[0,e]$.}\label{ib} \end{align}\ese
 We further assume that
\be\label{rne}\text{$T[\psi]\leq 0$ in $\Om$.} \ee

\begin{remark}{\rm Although
Theorem \ref{bxv} suggests that (\ref{rne}) may be true for all weak
solutions of (\ref{apb}) for which (\ref{kel}) holds, we have so far
not been able to prove this. Note however that, as (\ref{tre})
shows, (\ref{rne}) holds for weak solutions of (\ref{apb}) which
arise as limits of sequences of classical solutions as in Theorem
\ref{exi}.}
\end{remark}

The main result of this section is a proof of the Stokes conjecture
in the following form.

\begin{theorem}\label{sto}Let $(\mcs,\mcb_F,\psi, 0)$ be an extreme
wave which satisfies {\rm(\ref{kel})}-{\rm (\ref{rne})}. In
addition, suppose that $\mcs$ and $\psi$ are symmetric with respect
to the vertical line $X=0$. Then
\[\text{either }\lim_{s\to 0\pm}\frac{v(s)}{u(s)}=\mp\frac{1}{\sqrt 3}
\quad\text{or }\lim_{s\to 0\pm}\frac{v(s)}{u(s)}=0.\] Moreover, if
$\gamma(r)\geq 0$ for all $r\in[0,\delta]$, for some $\delta\in
(0,B]$, then
\[\lim_{s\to 0\pm}\frac{v(s)}{u(s)}=\mp\frac{1}{\sqrt
3}.\]
\end{theorem}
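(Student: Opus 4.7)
The plan is to carry out the blow-up argument sketched in the introduction. With $Q=0$ and the origin a stagnation point, the free-boundary condition $|\nabla\psi|^2=-2gY$ forces the self-similar scaling
\[\psi^\veps(X,Y):=\veps^{-3/2}\psi(\veps X,\veps Y),\]
under which the Bernoulli condition is invariant while $\Delta\psi^\veps=-\veps^{1/2}\gamma(\veps^{3/2}\psi^\veps)\to 0$, so that any blow-up limit is harmonic. Uniform bounds on the family $\{\psi^\veps\}$ come from the pressure inequality (\ref{rne}): combined with the definition of $T[\psi]$ this yields $|\nabla\psi|^2\leq -2gY+O(\psi)$ in $\Om$, hence $|\nabla\psi(X,Y)|\leq Cr^{1/2}$ near the origin, with $r:=\sqrt{X^2+Y^2}$. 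Integrating along the vertical segment joining a point $(X_0,Y_0)\in\Om$ to $\mcs$ (using the monotonicity built into (\ref{arfa})) then produces $|\psi(X_0,Y_0)|\leq C r^{3/2}$.

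With these bounds in hand, the rescaled family is equi-H\"older on compact sets and the machinery of Section~4 can be reproduced locally: along some $\veps_k\searrow 0$, $\psi^{\veps_k}\to\psi^*$ locally uniformly, $\nabla\psi^{\veps_k}\to\nabla\psi^*$ weak-$*$ in $L^\infty_\loc$, the rescaled curves $\veps_k^{-1}\mcs$ converge to a locally rectifiable symmetric curve $\mcs^*$ through the origin, and $(\mcs^*,\psi^*)$ is a weak solution of the limiting homogeneous free-boundary problem (harmonic in the limiting domain, with $\psi^*=0$ and $|\nabla\psi^*|^2=-2gY$ on $\mcs^*$, the latter recovered by passing to the limit in (\ref{weak}) via Propositions \ref{weha}--\ref{green}). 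The symmetry about $X=0$ and the monotonicity of $u,v$ on either side of $0$ transfer to $\mcs^*$, so Theorem \ref{uniq} forces $(\mcs^*,\psi^*)$ to be either the trivial solution $(\mcs^*=\{Y=0\},\,\psi^*\equiv 0)$ or the Stokes corner flow, whose free boundary consists of two rays enclosing a symmetric $120^\circ$ corner at the origin.

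Reading the slope of $\mcs^*$ at the origin in each case yields $\lim_{s\to 0\pm}v(s)/u(s)\in\{\mp 1/\sqrt 3,\,0\}$ along any such subsequence. Subsequence independence follows from a connectedness argument: $v/u$ is continuous on $(0,e]$, so its cluster set as $s\to 0^+$ is a connected subset of $\bdr$, hence a singleton drawn from $\{-1/\sqrt 3,0\}$; similarly on the left. For the second part of the statement, under $\gamma\geq 0$ on $[0,\delta]$ the equation $\Delta\psi=-\gamma(\psi)\leq 0$ makes $\psi$ superharmonic near the origin, which allows a non-degeneracy estimate of the form $\sup_{B_r(0)\cap\Om}\psi\geq c\, r^{3/2}$. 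This estimate, obtained by comparing $\psi$ with a suitably rescaled Stokes corner subsolution and using $\psi=0$ on $\mcs$ together with the Bernoulli condition on $\mcs$, survives the rescaling and excludes the trivial alternative $\psi^*\equiv 0$. Hence only the $120^\circ$ alternative remains.

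The main obstacle will be recovering the Bernoulli condition on $\mcs^*$ in the blow-up limit: exactly as in Section~4, one must rule out the possibility that portions of the rescaled free surface collapse onto a vertical segment along which mass is lost from the boundary integral in (\ref{weak}), and I expect to adapt the local contradiction argument around (\ref{dera})--(\ref{yvc1}) to the present one-point setting. A secondary difficulty is the sharp non-degeneracy required to dispose of the trivial alternative when $\gamma\geq 0$, since the lower bound $\sup_{B_r(0)\cap\Om}\psi\geq c\, r^{3/2}$ must hold arbitrarily close to the stagnation point itself, where classical boundary-point arguments are delicate because $|\nabla\psi|$ vanishes there.
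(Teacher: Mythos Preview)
Your blow-up scheme, the uniform $|\nabla\psi|\le C r^{1/2}$ bound from $T[\psi]\le 0$, the compactness and passage to the limiting homogeneous free-boundary problem, the appeal to Theorem~\ref{uniq}, and the connectedness argument for subsequence independence all match the paper's route via Theorem~\ref{tblow} and the observation that $\mathcal Q$ is a closed subinterval of $[-\infty,0]$ contained in $\{0,-1/\sqrt3\}$. So the first assertion is handled the same way.

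The one substantive divergence is in how you propose to eliminate the horizontal alternative when $\gamma\ge 0$ near the surface. You aim for a scale-invariant nondegeneracy $\sup_{B_r\cap\Om}\psi\ge c\,r^{3/2}$ via comparison with a rescaled Stokes-corner subsolution; you already note this is delicate, and indeed obtaining a uniform $c>0$ that survives the rescaling is not straightforward, since the Stokes corner has the \emph{critical} exponent $3/2$ and its placement inside $\Om$ is not a priori controlled. The paper avoids this difficulty altogether with a simpler, purely geometric argument (Proposition~\ref{sdf}): if $\lim v/u=0$ then $\Om$ contains truncated cones at the origin with opening strictly greater than $120^\circ$; superharmonicity of $\psi$ (from $\gamma\ge 0$) together with Oddson's boundary-point estimate gives $\psi(0,Y)\ge \kappa|Y|^{\mu}$ with $\mu=\pi/(\text{angle})<3/2$, which contradicts the upper bound $|\nabla\psi(0,Y)|^2\le K|Y|$ coming from $T[\psi]\le 0$. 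In other words, the paper compares in a \emph{strictly wider} cone, so the exponent drops below $3/2$ and the clash with the upper bound is immediate---no uniform nondegeneracy at the critical scale is needed. Your plan is not wrong in spirit, but replacing your Stokes-corner comparison by this subcritical cone comparison removes precisely the difficulty you flagged.
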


The proof of Theorem \ref{sto} is obtained by combining Theorem
\ref{tblow}, Theorem \ref{uniq} and Proposition \ref{sdf} below, and
will be given after the proofs of those results.

\begin{remark}\label{conj}{\rm We conjecture that the result of Theorem \ref{sto}
continues to hold if the assumption of symmetry of $\mcs$ and $\psi$
is dropped, but this is open even when $\gamma\equiv 0$.}
\end{remark}

\begin{remark}{\rm The existence of trivial extreme waves, noted in Remark \ref{twq}, shows that
the possibility that $\lim_{s\to 0\pm}\frac{v(s)}{u(s)}=0$ in
Theorem \ref{sto} cannot in general be ruled out under the
assumptions there.}
\end{remark}

 We study the asymptotics near the origin of
extreme waves $(\mcs,\mcb_F,\psi, 0)$ satisfying
{\rm(\ref{kel})}-{\rm (\ref{rne})} by means of a blow-up argument
fully described in the proof of Theorem \ref{tblow}. The limiting
problem obtained is the following: find a locally rectifiable curve
$\wids=\{(\tilde u(s),\tilde v(s)):s\in\bdr\}$, where \bese
\label{blow}\begin{align} &\text{$s\mapsto (\tilde u(s),\tilde v(s))$ is injective on $\bdr$},\label{wa}\\
&\tilde u(0)=0,\quad\tilde v(0)=0,\label{wb}\\
&\text{$s\mapsto \tilde u(s)$ is
 nondecreasing on $\bdr$}\label{wc}\\
 &\text{$s\mapsto\tilde v(s)$ is
nondecreasing on $(-\infty,0]$ and nonincreasing on
$[0,\infty)$,}\label{wf}\\&\text{$\lim_{s\to\pm\infty}(|\tilde
u(s)|+|\tilde v(s)|)=\infty$.}\label{we}
\end{align}
such that there exists a function $\tips$ in the unbounded domain
$\wido$ below $\wids$, which satisfies
\begin{align} &\Delta\tips=0\quad\text{in }\wido,\label{ha}\\
&\psi\in\Lip_\loc(\wido\cup\wids),\label{fafa}\\
& \tips\geq 0\quad\text{on }\wido\qquad\text{and}\qquad
\tips_{_Y}\leq 0\quad\text{on }\wido,\label{mg}\\
&\tips=0\quad\text{on }\wids,\label{mh}\\
&|\nabla\tips|^2+2gY=0\quad\text{$\mch^1$-almost everywhere on
$\wids$}\label{aqw}.
\end{align} \ese
Note that, in view of (\ref{ha}) and (\ref{fafa}), the partial
derivatives of $\tips$ have non-tangential limits $\mch^1$-almost
everywhere on $\wids$. The requirement (\ref{aqw}) refers to these
non-tangential boundary values.

\begin{theorem}\label{tblow}Let $(\mcs,\mcb_F,\psi, 0)$ be an extreme wave
 which satisfies {\rm(\ref{kel})}-{\rm (\ref{rne})}. Let
\begin{align}\mathcal{Q}:=\{q\in[-\infty,0]:&\text{ there exists a sequence
$\{\veps_j\}_{j\geq 1}$ with $\veps_j\searrow 0$ as
$j\to\infty$}\non\\&\text{ such that }
\frac{v(\veps_j)}{u(\veps_j)}\to q\text{ as
}j\to\infty\}.\label{liq}\end{align}

If $q\in\mathcal{Q}$ and $q\neq -\infty$, then there exists a
solution $(\wids,\tips)$ of {\rm (\ref{blow})} with $\tilde v(\tilde
s)=q\tilde u(\tilde s)$ for some $\tilde s\in(0,\infty)$.

 If $-\infty\in\mathcal{Q}$, then there exists a solution $(\wids,\tips)$ of {\rm (\ref{blow})}
with $\tilde u(\tilde s)=0$ for some $\tilde s\in(0,\infty)$.

Moreover, if $\mcs$ is symmetric with respect to the line $X=0$,
then $-\infty\notin\mathcal{Q}$.
\end{theorem}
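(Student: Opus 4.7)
The proof proceeds by a blow-up at the origin in the physical domain. For $q\in\mathcal{Q}$ fix a sequence $\veps_j\searrow 0$ with $v(\veps_j)/u(\veps_j)\to q$ and set $\lambda_j:=u(\veps_j)$ when $q\in(-\infty,0]$, and $\lambda_j:=-v(\veps_j)$ when $q=-\infty$ (both positive for $j$ large). Define the rescaled objects
\[
\psi^j(X,Y):=\lambda_j^{-3/2}\psi(\lambda_j X,\lambda_j Y),\qquad \Om^j:=\lambda_j^{-1}\Om,\qquad \mcs^j:=\lambda_j^{-1}\mcs.
\]
The exponent $3/2$ is forced by the Bernoulli condition, which rescales to $|\nabla\psi^j|^2+2gY=0$ on $\mcs^j$; at the same time $\Delta\psi^j=-\lambda_j^{1/2}\gamma(\lambda_j^{3/2}\psi^j)\to 0$ locally uniformly because $\gamma$ is bounded.

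The critical analytical input is a growth estimate $\psi(X,Y)\le C|(X,Y)|^{3/2}$ in a neighborhood of the origin. From (\ref{rne}) with $Q=0$ we have $|\nabla\psi|^2\le -2gY+2\varpi\psi-2\hg(\psi)$ in $\Om$. Combining the Lipschitz bound $\psi(X,Y)\le L|(X,Y)|$ (using $\psi(0,0)=0$) with $|\hg(\psi)|\le C\psi$ and $-Y\le|(X,Y)|$ yields $|\nabla\psi(X,Y)|\le C'|(X,Y)|^{1/2}$; integrating along the segment from $(X,Y)$ to its nearest point on $\mcs$ (of length at most $|(X,Y)|$, where $\psi$ vanishes) produces the $3/2$-growth. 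Rescaling turns this into $\psi^j(X,Y)\le C|(X,Y)|^{3/2}$ uniformly in $j$, and re-substitution into the rescaled pressure inequality produces a uniform Lipschitz bound for $\{\psi^j\}$ over any bounded subset of $\bdr^2$.

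With this compactness, Arzel\`a--Ascoli plus a diagonal argument extract $\psi^j\to\tips$ locally uniformly and $\nabla\psi^j\to\nabla\tips$ weak-$*$ in $L^\infty_\loc$ along a subsequence. The free surface is handled by reparametrizing $\mcs^j$ via $\tilde u_j(s):=u(\veps_j s)/\lambda_j$, $\tilde v_j(s):=v(\veps_j s)/\lambda_j$, which inherit the monotonicity from (\ref{ia})--(\ref{ib}); Helly's selection theorem supplies pointwise limits $\tilde u,\tilde v$ satisfying (\ref{wb})--(\ref{wf}), and (\ref{we}) is automatic since $\lambda_j\to 0$. At $s=1$, $(\tilde u_j(1),\tilde v_j(1))\to(1,q)$ or $(0,-1)$, providing the distinguished $\tilde s\in(0,\infty)$. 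Passing to the limit in the weak formulation for $\psi^j$, as in the second half of the proof of Theorem \ref{exi}, the bulk term vanishes and matching the boundary term with Proposition \ref{green} applied to the harmonic $\tips$ (together with Lemma \ref{tg}) yields the Bernoulli identity (\ref{aqw}). Injectivity (\ref{wa}) is arranged by collapsing any parameter intervals on which $(\tilde u,\tilde v)$ is constant, while the ``no spurious vertical segment'' argument from Theorem \ref{exi} (adapted with $Q=0$) forbids vertical pieces of $\wids$ at $X\neq 0$; those at $X=0$ remain consistent with (\ref{blow}) and correspond precisely to $q=-\infty$.

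For the symmetry assertion, suppose $\mcs$ is symmetric about $X=0$ and, for contradiction, that $-\infty\in\mathcal{Q}$. The construction yields $(\wids,\tips)$ of (\ref{blow}) with $\tilde u(\tilde s)=0$ for some $\tilde s>0$; monotonicity of $\tilde u$ forces $\tilde u\equiv 0$ on $[0,\tilde s]$, so $\wids$ contains the vertical segment $\{0\}\times[\tilde v(\tilde s),0]$ with $\tilde v(\tilde s)<0$. The symmetry of $\mcs$ passes to the limit, so $\tips$ is even in $X$; hence $\tips_{_X}(0,Y)=0$ on the segment, while $\tips\equiv 0$ on $\wids$ forces $\tips_{_Y}(0,Y)=0$ there. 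But (\ref{aqw}) demands $|\nabla\tips|^2=-2gY>0$ $\mch^1$-a.e.\ on the segment, the desired contradiction. The main obstacle throughout is the $3/2$-growth estimate: without it the rescaled functions fail to be equi-Lipschitz and the entire blow-up machinery collapses, and it rests delicately on combining the Lipschitz bound on $\psi$ with the pressure inequality (\ref{rne}), neither of which suffices on its own.
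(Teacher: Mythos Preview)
Your blow-up strategy matches the paper's, and the growth estimate, compactness, and passage to the limit in the weak identity are all handled correctly (the paper gets the sharper bound $|\nabla\psi|^2\le K|Y|$ directly from $T[\psi]\le 0$ together with $\psi(X,Y)\le\|\nabla\psi\|_\infty|Y|$, which follows because $\psi_Y<0$ and $\mcs\subset\{Y\le 0\}$; your route via $|(X,Y)|$ also works but is less clean). The choice of scale $\lambda_j$ versus the arclength scale $\veps_j$ is immaterial.

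There is, however, a real gap in how you handle injectivity at $X=0$ and, as a consequence, in the symmetry argument. You assert that ``vertical pieces of $\wids$ at $X=0$ remain consistent with (\ref{blow})'', but this is true only when the segment is covered once. Nothing in your argument excludes the case where both limiting branches $s>0$ and $s<0$ descend the same segment $\{0\}\times[\tilde v(\sigma),0]$ before separating; that double cover violates (\ref{wa}). The paper disposes of this by testing the limiting weak identity against $\zeta$ supported in a disc about the origin of radius $|\tilde v(\sigma)|$: the volume terms vanish while the boundary term contributes $2\int_{\tilde v(\sigma)}^{0}(-2gY)^{1/2}\zeta(0,Y)\,dY$, which cannot be zero for all such $\zeta$. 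This is precisely the ``$\sigma=0$'' step you need.

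Your symmetry argument then breaks down. If $\mcs$ is symmetric, the limiting parametrization satisfies $\tilde u$ odd and $\tilde v$ even, so any vertical piece at $X=0$ is automatically double-covered; that segment is therefore a spike lying \emph{above} $\wido$, not on $\partial\wido$, and $\tips\equiv 0$ in a full neighbourhood of it. The Bernoulli condition (\ref{aqw}) is simply not formulated there, so the claimed contradiction from $\tips_X(0,Y)=0$ does not arise (and evenness of $\tips$ does not force $\tips_X=0$ on a slit where $\tips$ fails to be $C^1$ across $X=0$). Once the $\sigma=0$ step is in place, the symmetry assertion is immediate: symmetry gives $\sigma^+=\sigma^-=\sigma=0$, whence $\tilde u(s)>0$ for all $s>0$, which is incompatible with $\tilde u(1)=0$ when $q=-\infty$. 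Equivalently, injectivity plus $(\tilde u(-\tilde s),\tilde v(-\tilde s))=(\tilde u(\tilde s),\tilde v(\tilde s))$ already yields the contradiction without any appeal to Bernoulli.
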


Note that problem (\ref{blow}) has a trivial solution
$(\wids_0,\tips_0)$ where $\wids_0=\{(X,0):X\in\bdr\}$ and $\tips_0=
0$ in $\bdr^2_-$, the lower half-plane. Any other solution of
(\ref{blow}) is called a \emph{nontrivial solution}.

There also exists an explicit nontrivial solution of (\ref{blow}),
known as the \emph{Stokes corner flow}.
 Let $\wids^*:=\{(X,\eta^*(X)):X\in\bdr\}$, where
\be\eta^*(X):=-{\ds\frac{1}{\sqrt 3}}|X|\quad\text{for all
}X\in\bdr.\label{etas}\ee Let $\wido^*$ be the domain below
$\wids^*$, and let the harmonic function $\tips^*$ in $\wido^*$ be
given, for all $(X,Y)\in\wido^*$, by
\be\tips^*(X,Y):=\frac{2}{3}g^{1/2}\Im \Big(i(iZ)^{3/2}\Big)
\quad\text{where }Z=X+iY.\label{fisa}\ee Then $(\wids^*,\tips^*)$ is
a nontrivial solution of (\ref{blow}).

\begin{theorem}\label{uniq} The only nontrivial solution
$(\wids,\tips)$ of {\rm(\ref{blow})} for which both $\wids$ and
$\tips$ are symmetric with respect to the vertical line $X=0$ is the
Stokes corner flow $(\wids^*,\tips^*)$.
\end{theorem}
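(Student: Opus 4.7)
The plan is to reduce problem (\ref{blow}) under symmetry to the nonlinear singular integral equation for the tangent-angle function $\theta^*$ on $(0,\infty)$ that was introduced in \cite{AFT} as the blow-up limit of Nekrasov's equation, and then to invoke the uniqueness result of \cite{EV} for that equation.

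Let $(\wids,\tips)$ be a nontrivial symmetric solution of (\ref{blow}) and set $\wido^+:=\wido\cap\{X>0\}$. An application of Proposition \ref{maxp} to $\tips$ (harmonic, $\geq 0$, vanishing on $\wids$) and then to $\tips_{_Y}$ (harmonic and nonpositive, so either identically zero or strictly negative in $\wido$, with the former ruled out because (\ref{aqw}) and nontriviality preclude $\tips\equiv 0$) gives $\tips>0$ and $\tips_{_Y}<0$ throughout $\wido$. Let $\phi$ be the harmonic conjugate of $\tips$ in $\wido$, normalized so that $\phi=0$ on the symmetry segment $\wido\cap\{X=0\}$; the even-in-$X$ symmetry of $\tips$ forces $\phi$ to be odd in $X$, and $\phi_X=\tips_{_Y}<0$ combined with $\phi(0,Y)=0$ yields $\phi<0$ throughout $\wido^+$. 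Using the monotonicity (\ref{wa})-(\ref{we}) of $\wids$, its local rectifiability, the Carathéodory theory of boundary correspondence for conformal maps onto Jordan domains, and the Stokes-type asymptotic $W\sim c\,Z^{3/2}$ at the origin extracted from (\ref{aqw}), the complex potential $W:=\phi+i\tips$ is a conformal bijection of $\wido^+$ onto the open second quadrant $\{\Re W<0,\ \Im W>0\}$ that extends to a homeomorphism of closures sending $0$ to $0$. Composing with $V:=-W^2$ then gives a conformal map $V:\wido^+\to\{\Im V>0\}$ sending the right half of the free surface onto $(-\infty,0]$ and the symmetry segment onto $[0,\infty)$.

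Next, define the hodograph variable $\omega(V):=\log(dW/dZ)$ as a holomorphic function on $\{\Im V>0\}$, with a branch fixed by the stagnation behaviour at the origin, and write $\omega=\log q-i\theta$ with $q:=|\nabla\tips|$. The monotonicity (\ref{wc})-(\ref{wf}) translates to $0\leq\theta^*\leq\pi/2$ on the image of the right half of $\wids$, while on the image of the symmetry segment the horizontality of the flow imposed by $\tips_X(0,Y)=0$ forces $\Im\omega\in\pi\mathbb{Z}$. The free-boundary condition (\ref{aqw}) reads $q^2=-2gY$ along $\wids$; differentiating this along the free surface and using $dZ/dW=q^{-1}e^{i\theta^*}$ produces a first-order relation linking $q^3$ and $\theta^*$, which together with the Poisson representation of the harmonic function $\log q$ on $\{\Im V>0\}$ in terms of its boundary data yields the singular nonlinear integral equation for $\theta^*$ derived in \cite{AFT}.

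A direct computation from (\ref{etas})-(\ref{fisa}) verifies that the Stokes corner flow $(\wids^*,\tips^*)$ corresponds to $\theta^*\equiv\pi/6$. Since $(\wids,\tips)$ is nontrivial, $\theta^*\not\equiv 0$, so the uniqueness theorem of \cite{EV} for this equation in the class $\{0\leq\theta^*\leq\pi/2\}$ forces $\theta^*\equiv\pi/6$, and inverting the conformal construction recovers $(\wids,\tips)=(\wids^*,\tips^*)$. The main obstacle is the conformal setup in the second paragraph: because $\wids$ is only assumed locally rectifiable and monotone and $\tips$ only locally Lipschitz up to the boundary, with the origin a stagnation point where $dW/dZ=0$, one must carefully justify the Carathéodory boundary correspondence at the origin, establish the universality of the $W\sim c\,Z^{3/2}$ asymptotic there (not merely for the explicit example $(\wids^*,\tips^*)$), and show that the resulting boundary values $\theta^*$ lie in a function space to which the \cite{EV} uniqueness theorem genuinely applies.
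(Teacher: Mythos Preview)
Your overall strategy---reduce (\ref{blow}) via conformal mapping and hodograph variables to the integral equation of \cite{AFT}, then invoke the uniqueness result of \cite{EV}---is indeed the paper's approach. However, there are two genuine problems with your implementation.

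First, your conformal setup invokes a ``Stokes-type asymptotic $W\sim cZ^{3/2}$ at the origin extracted from (\ref{aqw})'' to establish that $W$ maps $\wido^+$ onto the second quadrant. This is circular: the $Z^{3/2}$ behaviour is equivalent to the $120^\circ$ corner, which is precisely what the theorem asserts. You flag this as an obstacle but do not resolve it. The paper sidesteps the difficulty by mapping in the opposite direction: take a Riemann map $W_0:\bdc_+\to\wido$ onto the \emph{whole} domain (not the half $\wido^+$), and observe that $\tips\circ W_0$ is a positive harmonic function on $\bdc_+$ vanishing continuously on the real axis, hence $\tips\circ W_0(x+iy)=cy$ for some $c>0$. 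This immediately identifies the complex potential $\tilde\omega=\tilde\vfi+i\tips$ as the inverse conformal map, with no asymptotic information at the origin required. The Hardy-space membership of $\tau$ (and hence the existence and integrability of boundary values) then follows from $|\te|<\pi/2$ via the M.~Riesz theorem, again without any a~priori corner behaviour.

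Second, and more seriously, you invoke ``the uniqueness theorem of \cite{EV} for this equation in the class $\{0\leq\theta^*\leq\pi/2\}$'', but \cite[Theorem 4.5]{EV} (quoted in the paper as Theorem \ref{uint}) requires the additional hypothesis $\inf_{x>0}\te(x)>0$. Nontriviality of $(\wids,\tips)$ gives only $\te\not\equiv 0$, which is strictly weaker. The paper closes this gap with a new result, Proposition \ref{vsq}: any solution of (\ref{main}) with $0\leq\te\leq\pi/2$ and $\int_0^y\sin\te(w)\,dw>0$ for all $y>0$ automatically satisfies $\inf\te\geq 2/(9\pi)>0$. The proof is a short bootstrap using the elementary bound $\log|(x+y)/(x-y)|\geq 2y/x$ for $0<y<x$. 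Without this step---which you neither supply nor identify as missing---the appeal to \cite{EV} is not justified.
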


\begin{remark} {\rm We conjecture that the result of Theorem \ref{uniq}
continues to hold if the assumption of symmetry of $\wids$ and
$\tips$ is dropped. If this were the case, the validity of the
conjecture in Remark \ref{conj} would immediately follow. It is
conceivable that the moving-planes method could be used to prove the
symmetry of all solutions of (\ref{blow}). This method has so far
been successfully used to prove the symmetry of various types of
hydrodynamic waves, see \cite{CEW} for references. The main
difficulty in the present situation is the lack of any estimates on
the behavior of $(\wids,\tips)$ at infinity. If good enough
estimates of this type were available, the desired result would
follow, see \cite{CS} for a related situation and \cite[Theorem
3.1]{EV} for how to deal with the presence of a stagnation point.}
\end{remark}

The following simple result, which will be used in the proofs of
Theorem \ref{sto} and Theorem \ref{asco} below, is also of some
interest in itself.

\begin{proposition}\label{sdf} Suppose that $\gamma(r)\geq 0$ for all $r\in[0,\delta]$,
for some $\delta\in (0,B]$. Let $(\mcs,\mcb_F,\psi, 0)$ be an
extreme wave,
 where {\rm(\ref{kel})},
{\rm (\ref{i00})-(\ref{ia})} hold, and $T[\psi]\leq 0$ in $\Om$.
Then $\Om$ does not contain any truncated cone with vertex at the
origin and opening angle greater that $120^\circ$.
\end{proposition}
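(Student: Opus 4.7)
The plan is to trap $\psi$ along a radial segment inside the alleged cone between two powers of $|p|$ with incompatible exponents once the opening angle exceeds $120^{\circ}$: an upper bound $\psi(p)\le\tilde C|p|^{3/2}$ from the interior estimate $T[\psi]\le 0$ combined with the Lipschitz bound, and a lower bound $\psi\ge\mu\, r^{\pi/(2\alpha')}$ with $\pi/(2\alpha')<3/2$ obtained by a cone-barrier comparison with a homogeneous harmonic function on a slightly narrower sub-cone. Suppose for contradiction that $\Om$ contains a sector
\[
C=\{r(\cos\theta,\sin\theta):0<r<R,\ \theta\in(\theta^{*}-\alpha,\theta^{*}+\alpha)\},\qquad \alpha>\pi/3.
\]
By continuity of $\psi$ on $\overline\Om$ and $\psi(0,0)=0$, I would shrink $R$ to $r_{0}\in(0,R]$ so that $\psi\le\delta$ on $\overline\Om\cap\overline{B_{r_{0}}(0)}$; then on $C\cap B_{r_{0}}(0)$ one has $\gamma(\psi)\ge 0$, $\hg(\psi)\ge 0$, and hence $\Delta\psi=-\gamma(\psi)\le 0$.

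For the upper bound, $T[\psi]\le 0$ together with $\hg(\psi)\ge 0$, the Lipschitz bound $\psi(p)\le L|p|$ from (\ref{lip}) (using $\psi(0,0)=0$), and $|Y|\le|p|$ yield
\[
|\nabla\psi(p)|^{2}\le -2gY+2\varpi\psi(p)\le (2g+2\varpi L)\,|p|\qquad\text{in } \Om\cap B_{r_{0}}(0).
\]
Integrating $|\nabla\psi|$ along the radial segment from $0$ to any $p\in C\cap B_{r_{0}}(0)$ (which lies entirely in $C$) then produces $\psi(p)\le\tilde C|p|^{3/2}$. For the lower bound, fix $\alpha'\in(\pi/3,\alpha)$ and $R'\in(0,r_{0})$, and set $C'=\{r(\cos\theta,\sin\theta):0<r<R',\ \theta\in(\theta^{*}-\alpha',\theta^{*}+\alpha')\}\subset C$. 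The function $h(r,\theta)=r^{\pi/(2\alpha')}\cos\bigl(\pi(\theta-\theta^{*})/(2\alpha')\bigr)$ is harmonic and positive on $C'$ and vanishes on its two lateral rays. Since $\psi$ is superharmonic and nonnegative on $C\cap B_{r_{0}}(0)$ and cannot vanish identically on an open subset of $\Om$ (rewrite $\Delta\psi+c(X,Y)\psi=-\gamma(0)$ with $c=\int_0^1\gamma'(t\psi)\,dt$ bounded: if $\gamma(0)\neq 0$ this is immediate, while if $\gamma(0)=0$ apply unique continuation and the boundary value $\psi=B$ on $\mcb_{F}$), the strong minimum principle gives $\psi>0$ there, so $m:=\min_{\overline{C'}\cap\{r=R'\}}\psi>0$. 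Picking $\mu\in(0,m/(R')^{\pi/(2\alpha')}]$ ensures $\mu h\le\psi$ on $\partial C'$, and the minimum principle applied to the superharmonic function $\psi-\mu h$ propagates this inside $C'$; evaluating on the axis $\theta=\theta^{*}$ yields
\[
\psi(r\cos\theta^{*},r\sin\theta^{*})\ \ge\ \mu\,r^{\pi/(2\alpha')}\qquad(0<r<R').
\]

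Combining the two estimates on the axis gives $\mu\le\tilde C\,r^{\,3/2-\pi/(2\alpha')}$ for all sufficiently small $r>0$, and since $\alpha'>\pi/3$ the exponent is strictly positive, so the right-hand side tends to $0$ as $r\searrow 0$, contradicting $\mu>0$. The most delicate step is the lower bound, specifically the assertion that $\psi>0$ on the outer arc so that a positive scale $\mu$ can be picked: it rests on the strong minimum principle for the superharmonic function $\psi$ together with a unique-continuation argument to rule out $\psi\equiv 0$ on an open subset of $\Om$. The remaining ingredients—the radial integration of the Bernoulli-type gradient estimate and the textbook cone barrier—are routine.
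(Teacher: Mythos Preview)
Your proof is correct and follows essentially the same route as the paper's: an upper bound $\psi\leq Cr^{3/2}$ from $T[\psi]\leq 0$ versus a lower bound $\psi\geq\mu r^{\nu}$ with $\nu<3/2$ from superharmonicity of $\psi$ in the cone, yielding a contradiction as $r\searrow 0$. The paper obtains the lower bound by citing Oddson's boundary-point principle rather than constructing the homogeneous harmonic barrier on a sub-cone explicitly as you do; note also that your unique-continuation detour to secure $\psi>0$ on the outer arc is unnecessary, since hypothesis (\ref{kel}), namely $\psiy<0$ in $\Om$, together with $\psi=0$ on $\mcs$ already gives $\psi>0$ throughout $\Om$.
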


 The next result is new even for irrotational waves, in
that the symmetry of $\mcs$ and $\psi$ is not required. The drawback
is that the existence of lateral tangents at the stagnation point is
an assumption.

\begin{theorem}\label{asco} Let $(\mcs,\mcb_F,\psi, 0)$ be an extreme wave
 which satisfies {\rm(\ref{kel})}-{\rm (\ref{rne})}. Suppose that
there exist $q_\pm\in[0,\infty]$ such that $\lim_{s\to
0\pm}\frac{|v(s)|}{|u(s)|}=q_\pm$. Then either $q_\pm=\frac{1}{\sqrt
3}$ or $q_\pm=0$. Moreover, if $\gamma(r)\geq 0$ for all
$r\in[0,\delta]$, for some $\delta\in(0,B]$,
 then
$q_\pm=\frac{1}{\sqrt 3}$.
\end{theorem}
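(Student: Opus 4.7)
My plan is to extract a blow-up limit of $(\mcs,\psi)$ at the stagnation point via Theorem \ref{tblow}, use the hypothesis that the lateral ratios actually exist (not merely along subsequences) to conclude that the blow-up has a straight, wedge-shaped free boundary, and then classify such wedges using the boundary condition in (\ref{blow}).

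By (\ref{i00})-(\ref{ib}) one has $v(s)\le 0$ for $s$ near $0$, while $u(s)\ge 0$ on $[0,e]$ and $u(s)\le 0$ on $[d,0]$; hence the hypothesis $\lim_{s\to 0\pm}|v(s)|/|u(s)|=q_\pm$ translates into $\lim_{s\to 0+}v(s)/u(s)=-q_+$ and $\lim_{s\to 0-}v(s)/u(s)=q_-$, making the set $\mathcal{Q}$ in (\ref{liq}) a singleton on each side. Theorem \ref{tblow} then gives a blow-up limit $(\wids,\tips)$ solving (\ref{blow}). Any boundary point $(\tilde u(\tilde s),\tilde v(\tilde s))$ with $\tilde s>0$ arises as a limit of rescaled points $(\veps_j^{-1}u(s_j),\veps_j^{-1}v(s_j))$ with $s_j\to 0^+$, whence $\tilde v(\tilde s)/\tilde u(\tilde s)=\lim v(s_j)/u(s_j)=-q_+$, with the convention that $q_+=\infty$ corresponds to a vertical ray. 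Combined with the analogous statement for $\tilde s<0$, this forces $\wids$ to be the union of two straight half-rays from the origin with slopes $-q_+$ and $q_-$, so $\wido$ is a planar wedge.

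Suppose first that $(\wids,\tips)$ is nontrivial. Then $\tips\not\equiv 0$, for otherwise (\ref{aqw}) would force $Y=0$ on $\wids$ and reduce $\wids$ to the $X$-axis. Let $\theta_L<\theta_R$ be the polar angles of the two rays and $\alpha=\theta_R-\theta_L\in(0,\pi)$. A harmonic function in the wedge vanishing on both rays and of polynomial growth $O(r^{3/2})$ at infinity (inherited from the scaling $\veps_j^{-3/2}\psi(\veps_j\cdot)$ used to build the blow-up) is a finite sum of modes $r^{n\pi/\alpha}\sin(n\pi(\theta-\theta_L)/\alpha)$ with $n\pi/\alpha\le 3/2$. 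Because $\tips=0$ on the boundary, $|\nabla\tips|^2=(r^{-1}\partial_\theta\tips)^2$ there, and (\ref{aqw}) reads $(r^{-1}\partial_\theta\tips)^2=-2gr\sin\theta_i$ on each ray, which is of order exactly $r$. Matching leading powers forces $\pi/\alpha=3/2$, so $\alpha=2\pi/3$ with only the fundamental mode $n=1$ active; matching coefficients then yields $\sin\theta_L=\sin\theta_R$, which together with $\theta_R-\theta_L=2\pi/3$ in $(-\pi,0)$ gives $\theta_L+\theta_R=-\pi$, hence $\theta_R=-\pi/6$ and $\theta_L=-5\pi/6$. One can then read off $q_+=q_-=\tan(\pi/6)=1/\sqrt 3$ directly, or use the symmetry just derived together with Theorem \ref{uniq} to identify $(\wids,\tips)$ with the Stokes corner flow. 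The same matching argument also excludes the mixed possibility that exactly one of $q_+,q_-$ vanishes (the constraint $\sin\theta_L=\sin\theta_R$ then becomes $0=\sin\theta_R$ with $\theta_R\in(-\pi,0)$, impossible).

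If instead $(\wids,\tips)$ is the trivial solution, then $\wids$ is the $X$-axis and the first paragraph gives $q_+=q_-=0$. Under the additional hypothesis $\gamma\ge 0$ on $[0,\delta]$, this alternative is excluded via Proposition \ref{sdf}: $q_\pm=0$ combined with (\ref{i00})-(\ref{ib}) traps $\mcs$ near the origin inside $\{|Y|\le\veps|X|\}$ for any chosen $\veps>0$, so $\Om$ contains the truncated cone $\{(r\cos\theta,r\sin\theta):0<r<\rho,\ \theta\in(-\pi+\arctan\veps,-\arctan\veps)\}$ of opening $\pi-2\arctan\veps>2\pi/3$, contradicting Proposition \ref{sdf}. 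Hence $q_+=q_-=1/\sqrt 3$. The main technical obstacle is the mode decomposition of $\tips$ in the unbounded wedge used in paragraph three, for which one must justify both the $O(r^{3/2})$ growth of $\tips$ at infinity and a Phragm\'en--Lindel\"of type finite-expansion statement; a cleaner alternative, once one has extracted from the boundary condition the symmetry $\theta_L+\theta_R=-\pi$ of $\wids$ about $X=0$, is simply to invoke Theorem \ref{uniq}.
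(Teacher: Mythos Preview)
Your approach is essentially the paper's: blow up via Theorem~\ref{tblow}, use the existence of the one-sided limits $q_\pm$ to identify $\wids$ as a wedge, classify the harmonic functions satisfying (\ref{ha})--(\ref{aqw}) in that wedge, and exclude the flat case via Proposition~\ref{sdf}. The only real difference is in the wedge classification. You cut down to a single mode by combining an $O(r^{3/2})$ growth bound at infinity (indeed available from (\ref{121}) in the proof of Theorem~\ref{tblow}) with a Phragm\'en--Lindel\"of argument, and you correctly flag this as the step needing justification. The paper bypasses this entirely: since (\ref{mg}) gives $\tips\ge 0$ in $\wido$ and (\ref{fafa}) gives continuity at the apex, the nonnegative harmonic functions in a wedge of opening $\alpha$ vanishing on both sides are exactly the nonnegative multiples of $r^{\pi/\alpha}\sin(\pi(\theta-\theta_L)/\alpha)$, so no growth estimate or expansion is needed. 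From there checking (\ref{aqw}) directly yields $q_\pm=0$ or $q_\pm=1/\sqrt 3$, which is what your matching-of-powers computation also recovers. Note that your ``cleaner alternative'' via Theorem~\ref{uniq} does not actually sidestep the obstacle, since you still need the single-mode form to extract the symmetry $\sin\theta_L=\sin\theta_R$ in the first place; positivity is the shortcut. Finally, the paper treats the cases $q_\pm=\infty$ explicitly (the mixed case by the same wedge analysis, $q_+=q_-=\infty$ by the $\sigma=0$ argument in the proof of Theorem~\ref{tblow}), whereas you absorb them into the convention ``vertical ray'' without spelling them out.
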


We now give the proofs of the results of this section.

\begin{proof}[Proof of Theorem \ref{tblow}] There is clearly no
loss of generality in assuming that the properties (\ref{arfa}) of
are satisfied by a parametrization of $\mcs$ by arclength, i.e.,
 $\mcs=\{(u(s),v(s)):s\in\bdr\}$,  where $u,\,v\in\Lip(\bdr)$
 satisfy  \[
 u'(s)^2+v'(s)^2=1\quad\text{for almost every }s\in\bdr.\]
  We extend $\psi$ to $\bdr^2$ with the value $0$ on the connected
component of $\bdr^2\setminus\overline\Om$ whose boundary is $\mcs$,
and with the value $B$ on the component whose boundary is $\mcb$.
The extension, denoted also by $\psi$, is a Lipschitz function on
$\bdr^2$. It is an immediate consequence of the assumption
(\ref{rne}) that there exists a constant $K>0$ such that
\be|\nabla\psi(X,Y)|^2\leq K|Y|\quad\text{for $\mcl^2$-almost every
$(X,Y)\in\bdr^2$}.\label{lipq}\ee

Let $q\in\mathcal{Q}$ and let the sequence $\{\veps_j\}_{j\geq 1}$
with $\veps_j\searrow 0$ as $j\to\infty$ be such that
${v(\veps_j)}/{u(\veps_j)}\to q$ as $j\to\infty$. Let us consider
the following sequence of rescalings of the domain $\Om$ and the
function $\psi$. For any $j\geq 1$, let
\be\Om_j:=\frac{1}{\veps_j}\Om,\ee
 and $\psi^j:\bdr^2\to\bdr$ be given by
\be\psi^j(X,Y):=\frac{1}{\veps^{3/2}_j}\psi(\veps_j X,\veps_j
Y)\quad\text{for all $(X,Y)\in\bdr^2$}.\label{xdey}\ee The boundary
of the domain $\Om_j$ consists of the curve
$\mcs_j:={\veps_j^{-1}}\mcs$ and the horizontal line
$\mcb_{F/{\veps_j}}$.
 The curve $\mcs_j$ is ${2L}{\veps_j^{-1}}\,$-periodic in the horizontal
 direction, and
 can be parametrized by arclength by means of the
functions $u_j,\, v_j:\bdr\to\bdr$ given by
\[u_j(s)=\frac{1}{\veps_j}u(\veps_j s),\quad v_j(s)=\frac{1}{\veps_j}v(\veps_j s)
\quad\text{for all }s\in\bdr .\] The function $\psi^j$ is also
${2L}{\veps_j^{-1}}\,$-periodic in the horizontal
 direction and is a weak solution of
 \begin{subequations}\label{xapb}
\begin{align}
& \Delta\psi^j=-\veps_j^{1/2}\gamma(\veps_j^{3/2}\psi^j)\quad\text{in }\Om_j,\label{xap0}\\
& \psi^j= B\veps_j^{-3/2}\quad \text{on
}\mcb_{F/\veps_j},\label{xap1}\\&\psi^j=0\quad \text{on }\mcs_j,
\label{xap2}\\&\vert\nabla\psi^j\vert^{2}+2gY=0 \quad\text{on }
\mcs_j\label{xap3}.
 \end{align}
\end{subequations}
In particular, for any $\zeta\in C_0^1(\bdr^2)$, the following holds
for all $j$ sufficiently large:
\begin{align}\int_{\Om_j}\nabla\psi^j\nabla\zeta\,d\mcl^2=\int_{\Om_j}&\veps_j^{1/2}
\gamma(\veps_j^{3/2}\psi^j)\zeta\,d\mcl^2\non\\ &- \int_\bdr(-2g
v_j(s))^{1/2}\zeta(u_j(s), v_j(s))\,ds.\label{ccila}\end{align} It
is immediate from (\ref{lipq}) and (\ref{xdey}) that \be \text{the
family $\{\psi^j\}_{j\geq 1}$ is equi-Lipschitz in any horizontal
strip $\mcg\subset\bdr^2$.}\label{65}\ee It follows that there exist
functions $\tips\in\Lip_\loc(\bdr^2)$ and $\tilde u,\,\tilde
v\in\Lip(\bdr)$ such that, along a subsequence (not relabeled),
\begin{align}
&\psi^j\to\tips\quad\text{uniformly on any compact set $\mck\subset\bdr^2$},\label{h2}\\
&\nabla\psi^j\to\nabla\tips\quad\text{weak* in $L^\infty(\mcg)$
 for any horizontal strip $\mcg\subset\bdr^2$},\label{h3}\\
&u_j\to\tilde u,\, v_j\to\tilde v \quad\text{uniformly on any
compact
subset of }\bdr,\label{h4}\\
&u_j'\to\tilde u',\, v_j'\to\tilde v'\quad\text{weak* in
}L^\infty(\bdr).\label{h5}
\end{align}
It is immediate that \be\tilde u'(s)^2+\tilde v'(s)^2\leq
1\quad\text{for almost every }s\in\bdr.\label{oaie}\ee By arguing as
in the proof of (\ref{taa}), we deduce that, for every
$a,\,b\in\bdr$ having the same sign, \be |b-a|\leq|\tilde
u(b)-\tilde u(a)|+|\tilde v(b)-\tilde v(a)|.\label{maa}\ee
Therefore,
\begin{gather}
 \text{the
mapping $s\mapsto(\tilde u(s),\tilde v(s))$ is
injective on $(-\infty,0]$ and on $[0,\infty)$,}\label{lija}\\
1\leq|\tilde u'(s)|+|\tilde v'(s)|\quad\text{for almost every
}s\in\bdr.\label{nonda}
\end{gather}
It is obvious that (\ref{wb})-(\ref{we}) hold.

We would now like to prove that (\ref{wa}) holds. Let $\sigma\in
[0,\infty]$ be such that
 \be \sigma:=\sup\{s\in[0,\infty):\tilde u(\pm s)=0\}.\label{alba}\ee
 To prove (\ref{wa}) it suffices, in view of (\ref{wc}), (\ref{wf})
 and (\ref{lija}), to show that $\sigma=0$. Note from (\ref{wc}), (\ref{wf}), (\ref{oaie}) and
(\ref{nonda}) that \bese\label{sfg}\begin{gather} \tilde u
'(s)=0\text{ for a.e. }s\in(-\sigma,\sigma)\\ \tilde v'(s)=1\text{
for a.e. }s\in(-\sigma,0),\qquad \tilde v'(s)=-1\text{ for a.e.
}s\in(0,\sigma).\end{gather}\ese

We now claim that $\sigma\in[0,\infty)$. Suppose for a contradiction
that $\sigma=+\infty$. It is immediate from (\ref{h4}) and
(\ref{sfg}) that, for any compact set $\mathcal{K}\subset\bdr^2$,
\be \text{ $\mathcal{K}\subset\bdr^2\setminus \{(0,Y): Y\leq0\}$
implies $\mathcal{K}\subset\mathcal{V}_j$ for all $j$ sufficiently
large,} \label{cola}\ee where, for any $j\geq 1$, $\mathcal{V}_j$ is
the component of $\bdr^2\setminus(\Om_j\cup\mcs_j)$ whose boundary
is $\mcs_j$. It follows from (\ref{cola}) that $\tips=0$ in
$\bdr^2\setminus \{(0,Y): Y\leq0\}$ and hence, using the continuity
of $\tips$ in $\bdr^2$, that $\tips=0$ in $\bdr^2$. Moreover, by
passing to the limit as $j\to\infty$ in (\ref{ccila}), we obtain,
taking also into account (\ref{sfg}), that
\[\int_{-\infty}^0(-2gY)^{1/2}\zeta (0,Y)\,dY=0\quad\text{for all }
\zeta\in C_0^1(\bdr^2).\] Since this is clearly not possible, it
follows that $\sigma\in[0,\infty)$.

  Let
\[\mathcal{I}:=\{(\tilde u(s),\tilde v(s)):s\in
(-\sigma,\sigma)\},\qquad\wids:=\{(\tilde u(s),\tilde
v(s)):s\in\bdr\setminus (-\sigma,\sigma)\}.
\]
 Then $\mathcal{I}$ is either empty or a half-open vertical segment,
 while $\wids$ is a locally rectifiable curve. Let $\wido$ be the unbounded domain below $\wids$.
  We first show
  that $\mathcal{I}$ is empty, and then that $(\wids,\tips)$ is a solution
 of (\ref{blow}).

 It is
immediate from (\ref{h4}) that, for any compact set
$\mathcal{K}\subset\bdr^2$,
\begin{gather}\text{$\mathcal{K}\subset\wido$ implies $\mathcal{K}\subset \Om_j$ for all $j$
sufficiently large,}\label{py}\\
\text{ $\mathcal{K}\subset\bdr^2\setminus(\wido\cup\wids\cup
\mathcal{I})$ implies $\mathcal{K}\subset\mathcal{V}_j$ for all $j$
sufficiently large.}\label{qy}
\end{gather}
 It is obvious that $\tips\geq 0$ in $\wido$.
Also, it follows from (\ref{qy}) that $\tips=0$ in
$\bdr^2\setminus(\wido\cup\wids\cup \mathcal{I})$, and hence, using
the continuity of $\tips$ in $\bdr^2$, that $\tips=0$ on
$\bdr^2\setminus\wido$. The validity of (\ref{h2})-(\ref{h4}) makes
it possible to pass to the limit as $j\to\infty$ in (\ref{ccila}),
to obtain \be\int_{\wido}\nabla\tips\nabla\zeta\,d\mcl^2= -
\int_\bdr(-2g\tilde v(s))^{1/2}\zeta(\tilde u(s),\tilde
v(s))\,ds\quad\text{for all }\zeta\in C^1_0(\bdr^2).\label{weila}\ee

We now claim that $\sigma=0$. Suppose for a contradiction that this
is not so. It is a consequence of (\ref{sfg}) that $\tilde
v(-\sigma)=\tilde v(\sigma)$. Let $\mathcal{\mcd}$ be the disc
centred at $(0,0)$ and with the point $(0, \tilde v(\sigma))$ on its
boundary. It follows from (\ref{weila}) and (\ref{sfg}) that
\[\int_{\tilde v(\sigma)}^{0}(-2gY)^{1/2}\zeta(0,Y)\,dY=0\quad\text{for all }\zeta\in
C^1_0(\mathcal{\mcd}).\] Since this is clearly not possible, it
follows that $\sigma=0$.

It has been therefore proved that $\mathcal{I}$ is empty,
$\wids=\{(\tilde u(s),\tilde v(s)):s\in\bdr\}$, and that
(\ref{wa})-(\ref{we}) hold. It is a consequence of (\ref{weila})
that
\[\int_{\wido}\nabla\tips\nabla\zeta\,d\mcl^2=0\label{tiwa}\quad\text{for
all }\zeta\in C^1_0(\wido).\] It follows that $\tips\in
C^\infty(\wido)$ satisfies \be\Delta\tips=0\quad\text{in
}\wido.\label{bg}\ee The condition $\tips\in\Lip_\loc(\bdr^2)$
ensures that the partial derivatives of $\tips$ have non-tangential
limits $\mch^1$-almost everywhere on $\wids$. It follows from
(\ref{gree}), upon taking into account (\ref{nonda}), that
\begin{align}\int_{\wido}\nabla\tips\nabla\zeta\,d\mcl^2=
 \int_{\bdr}\frac{\partial\tips}{\partial n}(\tilde u(s),\tilde
v(s))\zeta(\tilde u(s),\tilde v(s))(\tilde u'(s)^2+\tilde
v'(s)^2)^{1/2}\,ds\non\\\text{ for all }\zeta\in
C^1_0(\bdr^2).\label{wewea}\end{align} By comparing (\ref{weila})
and (\ref{wewea}),
 we deduce that
 \be -\frac{\partial\tips}{\partial n}(\tilde u(s),\tilde
v(s))(\tilde u'(s)^2+\tilde v'(s)^2)^{1/2}=(-2g\tilde
v(s))^{1/2}\quad\text{for a.e.\ }s\in\bdr.\label{relaa}\ee

It is a consequence of (\ref{65}) that
 \be \veps_j^{1/2}\gamma(\veps_j^{3/2}\psi^j)\to 0\quad\text{in }C^{0,\alpha}(\mck)
 \quad\text{for any compact set $\mck\subset\bdr^2$}.\label{uew}\ee
 Since (\ref{xap0}), (\ref{bg}), (\ref{py}) and (\ref{uew}) hold, a standard elliptic estimate
\cite[Theorem 4.6]{GT} shows that \be\psi^j\to \tips\quad\text{in
}C^{2,\alpha}_\loc(\wido).\label{paw}\ee
 Note now that
(\ref{rne}) yields, for all $j\geq 1$,
\begin{align}
|\nabla\psi^j(X,Y)|^2+2gY+\frac{2}{\veps_j}\hg(\veps_j^{3/2}\psi^j(X,Y))-2\veps_j^{1/2}\varpi\psi^j(X,Y)\leq
0\non\\\quad\text{for all }(X,Y)\in\Om_j.\label{maw}\end{align}
 We deduce from (\ref{paw}) and (\ref{maw})
   that \be
|\nabla\tips(X,Y)|^2+2gY\leq 0\quad\text{for all
}(X,Y)\in\wido.\label{121}\ee Since $\tips=0$ on $\wids$, it
follows, by using (\ref{121}) and Proposition \ref{tg}, upon taking
into account (\ref{nonda}), that \be 0\leq
-\frac{\partial\tips}{\partial n}(\tilde u(s),\tilde v(s))\leq
 (-2g\tilde v(s))^{1/2}\quad\text{for almost every $s\in\bdr$}.\label{ipea}\ee
 It follows from  (\ref{relaa}), (\ref{oaie}) and (\ref{ipea})
 that
\begin{gather}\tilde u'(s)^2+\tilde
v'(s)^2=1\quad\text{for almost every
}s\in\bdr,\\\frac{\partial\tips}{\partial n}(X,Y)=
 -(-2gY)^{1/2}\quad\text{ for $\mch^1$-almost every $(X,Y)\in\widetilde\mcs$}.\end{gather}
This completes the proof of the fact that $(\wids,\tips)$ is a
solution of (\ref{blow}).

 If $q\neq -\infty$, then obviously  $\tilde v(1)=q\tilde u(1)$, while if $q=-\infty$, then
 $\tilde u(1)=0$.

 If $\mcs$ is symmetric, the fact that
 $-\infty\not\in\mathcal{Q}$ is an immediate consequence of the fact that $\sigma=0$.
  This completes the proof of Theorem \ref{tblow}.
\end{proof}

\begin{proof}[Proof of Theorem \ref{uniq}] We first show how solutions of
(\ref{blow}) can be described by solutions of the nonlinear integral
equation (\ref{main}).  The required result is then obtained  by
invoking a uniqueness result from \cite{EV} for the integral
equation. In the process of deriving (\ref{main}) we also give a
theory of (not necessarily symmetric) solutions of (\ref{blow}),
concerning the reduction of this free-boundary problem to a problem
in a fixed domain and on the local regularity of solutions. Whilst
problem (\ref{blow}) appears not to have been studied before, there
are obvious similarities to problem (\ref{apb}) for irrotational
waves of finite or infinite depth, treatments of Hardy-space
solutions of which have been given in \cite{ST, EV, EV2, EV3}. To
avoid inessential technicalities, proofs of results for (\ref{blow})
are sometimes not given in situations where they would be obtainable
by routine modifications from proofs in \cite{ST, EV, EV2, EV3}.

Let $(\wids,\tips)$ be any nontrivial solution of (\ref{blow}). It
follows that (\ref{mg}) holds in the form \be\tips > 0\quad\text{on
}\wido\qquad\text{and}\qquad \tips_{_Y}< 0\quad\text{on
}\wido.\label{utq}\ee Since the non-tangential boundary values of
any bounded holomorphic function in a bounded open set whose
boundary is a rectifiable Jordan curve cannot vanish on a set of
positive $\mch^1$ measure unless the function is identically $0$, it
follows from (\ref{wf}), (\ref{aqw}) and (\ref{utq}) that \be \tilde
v(s)<0 \quad\text{for all }s\in\bdr\setminus\{0\}.\label{lk1}\ee
 Let us denote
\be\text{$\wids_+:=\{(\tilde u(s),\tilde v(s)):s\in(0,\infty)\}$ and
$\wids_-:=\{(\tilde u(s),\tilde v(s)):s\in(-\infty, 0)\}$.}\non\ee
Let $W_0:\bdc_+\to\wido$ be a conformal mapping from the upper
half-plane $\bdc_+$ onto $\wido$. By Caratheodory's Theorem, $W_0$
has an extension as a homeomorphism between the closures in the
extended complex plane of these domains. It is also classical that
$W_0$ can be chosen such that it maps the origin onto itself, the
positive real axis onto $\wids_-$ and the negative real axis onto
$\wids_+$. Then $\tips\circ W_0$ is a positive harmonic function in
the upper half-plane and continuous on its closure, with $\tips\circ
W_0=0$ on the real line. Hence there exists $c>0$ such that
\[(\tips\circ W_0)(z)=c y\quad\text{ for all }z=x+iy\in \bdc_+.\]
 Let $\tilde\vfi$ be a harmonic conjugate of
$-\tips$ in $\wido$, so that the function
$\tilde\om:=\tilde\vfi+i\tips$ is holomorphic in $\wido$ and
satisfies
\[(\tilde\omega\circ W_0)(z)=z\quad\text{for all }z\in\bdc_+.\]
Let $W:\bdc_+\to\wido$ be given by $W(z):=W_0(c^{-1}z)$ for all
$z\in \bdc_+$. Then $W$ has the same conformal mapping properties as
$W_0$, and $\tilde\omega$ is the inverse conformal mapping of $W$.
Let us write, for all $x+iy\in\bdc_+$,
\begin{align}W(x+iy)&=U(x,y)+i V(x,y),\label{uv}\\
 W'(x+iy)&=-\exp(\tau(x,y)+i\te(x,y))\label{tt}\end{align} where
$\tau$ and $\te$ are harmonic functions on $\bdr^2_+$. It follows
from (\ref{utq}) that \be -\frac{\pi}{2}< \te(x,y)<
\frac{\pi}{2}\quad\text{for all }(x,y)\in\bdr^2_+.\ee The M.\ Riesz
Theorem now implies that $\tau\in h^p_\bdc(\bdr^2_+)$ for all
$p\in(1,\infty)$. Therefore $\tau$ and $\te$ have non-tangential
boundary values almost everywhere on the real line, from which they
can be recovered by Poisson Formula and which are related to one
another by the Hilbert transform.

For any $x_0\in (0,\infty)$, let $X_0+iY_0:=W(x_0+i0)$, so that
$Z_0:=X_0+iY_0$ is located on $\wids_-$. Let $Z_1$ and $Z_2$ be
located on $\mcs_-$ such that $Z_0$ is situated between $Z_1$ and
$Z_2$ and  that there exist non-tangential limits of $\nabla\psi$ at
$Z_1$ and $Z_2$. Let $\mcg$ be a subdomain of $\wido$ such that the
boundary of $\mcg$ is a rectifiable Jordan curve
$\mathcal{J}:=\mathcal{I}\cup\mathcal{L}$, where $\mathcal{I}$ is
the arc of $\wids$ joining $Z_1$ and $Z_2$, and $\mathcal{L}$ is an
arc contained in $\wido$, joining $Z_1$ and $Z_2$ and which
approaches $\wids$ non-tangentially at $Z_1$ and $Z_2$. By
(\ref{aqw}) and the construction of $\mcg$, the non-tangential
boundary values of the harmonic function $\tau\circ W$ in
$h^p(\mcg)$ are essentially bounded, and therefore $\tau\circ W$ is
bounded in $\mcg$. It follows that there exists a rectangle
$\Pi:=(x_0-\epsilon, x-0+\epsilon)\times(0,\delta)$ in $\bdr^2_+$,
where $0<\epsilon<x_0$ and $\delta>0$, in which $\tau$ is bounded.
This shows that the partial derivatives of $U,\,V$ in (\ref{uv}) are
bounded in $\Pi$, and therefore have non-tangential limits almost
everywhere on $(x_0-\epsilon, x_0+\epsilon)\times\{0\}$. Since
$x_0\in(0,\infty)$ was arbitrary, it follows that the partial
derivatives of $U,\,V$ have non-tangential limits almost everywhere
on the positive real axis. A similar statement can be made for the
negative real axis.

 By arguing as in \cite[Lemma 4.2]{EV3}, we deduce  that the mapping
$ t\mapsto W(t+i0)$ is locally absolutely continuous on each of the
intervals $(0,\infty)$ and $(-\infty, 0)$, and \be\frac{d}{dt}W(t+
i0)=\lim_{(x,y)\to (t,0)} W'(x+iy)\quad\text{for almost every
}t\in\bdr,\label{aco}\ee the above limit being taken
non-tangentially within $\bdr^2_+$. But since the mappings $t\mapsto
U(t,0)$, $t\mapsto V(t,0)$ are monotone on $[0,\infty)$ and on
$(-\infty,0]$, it follows that $t\mapsto W(t+i0)$ is locally
absolutely continuous on $\bdr$.

For any harmonic function $\xi$ in $\bdr^2_+$ which has
non-tangential limits almost everywhere on the real axis, we use
from now on the notation $t\mapsto \xi(t)$ instead of either
$t\mapsto \xi(t,0)$ or $t\mapsto \xi(t+i0)$ to denote the boundary
values of $\xi$.

We deduce from  the free boundary condition (\ref{aqw}) that \be
|W'(t)|^2(-2gV(t))=1\quad\text{for almost every
}t\in\bdr,\label{vara}\ee and therefore \be
\tau(t)=-\log\{(-2gV(t))^{1/2}\}\quad\text{for almost every
}t\in\bdr.\label{taua}\ee It is also obvious that, for almost every
$t\in\bdr$, \be -U'(t)=\frac{\cos\te(t)}{(-2gV(t))^{1/2}},\quad
-V'(t)=\frac{\sin\te(t)}{(-2gV(t))^{1/2}}.\label{cela}\ee (Note that
by (\ref{aco}) the notation $U'(t),\, V'(t)$, for almost every
$t\in\bdr$, is unambiguous.) It follows that $\te(t)$ gives the
angle between the tangent to the curve $\wids$ at the point
$(U(t),V(t))$ and the horizontal, for almost every $t\in\bdr$. Note
also that a consequence of the fact that $\tau\in
h^p_\bdc(\bdr^2_+)$ for all $p\in(1,\infty)$ is that \be\int_\bdr
|\tau(w)|^p\frac{1}{1+w^2}\,dw<+\infty\quad\text{for all
}p\in(1,\infty).\label{inte}\ee

By a bootstrap argument as in \cite[Theorem 3.5]{EV4}, see also
\cite[Theorem 2.3]{EV}, we deduce that $W,\tau,\theta\in
C^\infty(\overline{\bdr^2_+}-\{(0,0)\})$, which implies that
$\wids_+$ and $\wids_-$ are $C^\infty$ curves and $\tips\in
C^\infty(\wido\cup\wids_+\cup\wids_-)$. A classical result of Lewy
\cite{Le} then shows that $\wids_+$ and $\wids_-$ are real-analytic
curves, and $\tips$ has a harmonic extension across  $\wids_+$ and
$\wids_-$.

Integrating the second relation in (\ref{cela}) written in the form
\[-V'(t){(-2gV(t))^{1/2}}={\sin\te(t)}\quad\text{for almost every }t\in\bdr,\]
we obtain, since $V(0)=0$, that
\be(-2gV(y))^{1/2}=\frac{1}{(3g)^{1/3}}\left(\int_0^y\sin\te(w)\,dw\right)^{1/3}\quad\text{for
all }y\in\bdr.\label{iam}\ee The geometric properties of $\wids$
expressed by (\ref{wc}) and (\ref{wf}) imply that \be 0\leq\te\leq
{\pi}/{2}\text{ on }(0,\infty) \quad\text{ and
}\quad-{\pi}/{2}\leq\te\leq 0\text{ on }(-\infty,0).\label{teb}\ee
Moreover, note from (\ref{lk1}) that $V(y)< 0$ for all $y\neq 0$.
This means, in view of (\ref{iam}), that
\be\int_0^y\sin\te(w)\,dw>0\quad\text{for all }y\neq
0.\label{klm}\ee

Suppose now that $\wids$ and $\tips$ are symmetric with respect to
the line $X=0$. It follows that $\tau$ is an even function and $\te$
is an odd function on $\bdr$. The definition of a Hilbert transform
then shows that \be \te(x)=\frac{1}{\pi}\int_0^\infty
\left(\frac{1}{x-y}+\frac{1}{x+y}\right)\{\tau(y)-\tau(x)\}\,dy\quad\text{for
all }x\in(0,\infty).\ee Note from (\ref{taua}) and (\ref{iam}) that
$\tau(y)/y\to 0$ as $y\to\infty$. Using this fact, (\ref{inte}) and
the monotonicity of $\tau$ on $(0,\infty)$, an integration by parts
(the validity of which can be justified as in \cite[Proof of
Proposition 4.3]{EV}) shows that \[
\te(x)=\frac{1}{3\pi}\int_0^\infty
\log\left|\frac{x+y}{x-y}\right|\{-\tau'(y)\}\,dy\quad\text{for all
}x\in(0,\infty).\] This means, upon using (\ref{taua}) and
(\ref{iam}), that \be\te(x)=\frac{1}{3\pi}\int_0^\infty
\log\left|\frac{x+y}{x-y}\right|\frac{\sin\te(y)}{\int_0^y\sin\te(w)\,dw}\,dy\quad\text{for
all }x\in(0,\infty).\label{main}\ee

The following result, which is \cite[Theorem 4.5]{EV}, is the key to
the proof of Theorem \ref{uniq} .
\begin{theorem}\label{uint} The
only solution $\te:(0,\infty)\to\bdr$ of {\rm (\ref{main})} with
$0\leq \te\leq \pi/2$ on $(0,\infty)$ and such that \be
0<\inf_{x\in(0,\infty)} \te(x)\label{asda}\ee is the function
$\te^*:(0,\infty)\to\bdr$ given by $\te^*(x)=\pi/6$ for all
$x\in(0,\infty)$.
\end{theorem}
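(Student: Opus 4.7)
The plan is to first verify that $\theta^*\equiv\pi/6$ satisfies (\ref{main}). With $\sin\theta^* = 1/2$ and $\int_0^y\sin\theta^*(w)\,dw = y/2$, the right-hand side reduces to $(3\pi)^{-1}\int_0^\infty\log|(x+y)/(x-y)|\,dy/y$. The substitution $u=y/x$ eliminates $x$ and yields the value $(3\pi)^{-1}\cdot(\pi^2/2)=\pi/6$, invoking the classical identity $\int_0^\infty u^{-1}\log|(1+u)/(1-u)|\,du=\pi^2/2$. Hence $\theta^*$ is a solution, and the substance of the theorem is the uniqueness assertion.

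For uniqueness, the natural route is to reverse the derivation leading to (\ref{main}): any solution $\theta$ with $0<\inf\theta\le\theta\le\pi/2$ corresponds, via $-W'=\exp(\tau+i\theta)$, to a conformal map $W:\bdc_+\to\wido$ whose image $\wido$ is the region below a nontrivial symmetric solution $(\wids,\tips)$ of (\ref{blow}). The lower bound $\inf\theta>0$ gives two-sided estimates $c_1 y\le F(y)\le c_2 y$ for $F(y):=\int_0^y\sin\theta$, whence (\ref{taua}) together with (\ref{iam}) produces the uniform control $|\tau(y)+\tfrac{1}{3}\log y|\le C$ on $(0,\infty)$. Matching this with the Stokes-corner data $\tau^*(y)=-\tfrac{1}{3}\log y + c_0$ and $\theta^*\equiv\pi/6$, one forms the holomorphic function $f(z):=(\tau-\tau^*)(z)+i(\theta-\theta^*)(z)$ on $\bdc_+$. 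By construction $f$ is bounded on $\bdc_+$, and symmetry makes $\Im f$ odd and $\Re f$ even across the imaginary axis.

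The aim is then to force $f\equiv\mathrm{const}$, so that the two conformal maps agree up to a dilation and hence $\theta\equiv\theta^*$. The boundary relation (\ref{vara}) supplies a nonlinear coupling between $\Re f$ and $\Im f$ on $\bdr$, while the dilation-invariance of (\ref{main}) can be used to fix a normalisation removing the residual one-parameter freedom. A Phragmén–Lindelöf argument applied to $f$, exploiting the oddness of $\Im f$ together with the sign information $-\pi/2\le\Im f\le\pi/2$, should then close the argument.

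The principal obstacle is the endpoint analysis at $0$ and $\infty$, where the kernel in (\ref{main}) is logarithmically singular and the conformal map $W$ degenerates. Ensuring that the a priori bound $\inf\theta>0$ transfers up to these endpoints in a quantitative form strong enough to control $f$ there is precisely where the earlier approach of \cite{AFT}, which required the more restrictive $\theta\le\pi/3$, fell short. The relaxation to $\theta\le\pi/2$ demanded by extreme waves with vorticity is exactly what the sharper endpoint analysis carried out in \cite{EV} supplies.
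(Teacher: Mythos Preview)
The paper does not prove Theorem~\ref{uint}; it is simply quoted as \cite[Theorem~4.5]{EV}. So there is no ``paper's own proof'' to compare against beyond the bare citation.

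Your verification that $\theta^*\equiv\pi/6$ solves (\ref{main}) is correct. The difficulty is entirely in the uniqueness direction, and here your sketch has a genuine gap. You propose to form $f=(\tau-\tau^*)+i(\theta-\theta^*)$, observe that $\Im f$ is bounded and odd, and then ``close the argument'' via Phragm\'en--Lindel\"of. But boundedness of $\Im f$ together with the symmetry $f(-\bar z)=\overline{f(z)}$ does \emph{not} force a holomorphic function on $\bdc_+$ to be constant: the M\"obius map $g(z)=(z-i)/(z+i)$ already satisfies both conditions and is nonconstant. You note that the boundary relation (\ref{vara}) supplies a ``nonlinear coupling between $\Re f$ and $\Im f$'', and indeed something of this sort is essential, but you give no indication of how to exploit it. The heart of the matter is precisely this nonlinear step, and a Phragm\'en--Lindel\"of principle applied to $f$ alone cannot see it.

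Your final paragraph then concedes that the decisive input is ``the sharper endpoint analysis carried out in \cite{EV}''. At that point your proposal and the paper coincide: both defer the actual proof to \cite{EV}. The conformal-comparison scaffolding you set up beforehand is therefore not an alternative proof but a reformulation that leaves the hard analytic work untouched. If you want a self-contained argument, you would need either to carry out that analysis of (\ref{main}) directly, or to replace the vague Phragm\'en--Lindel\"of step by a concrete mechanism that uses the full nonlinear boundary condition, and neither is present here.
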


The following new result shows that (\ref{asda}) is in fact not a
restriction in Theorem \ref{uint}.

\begin{proposition}\label{vsq} Let $\te:(0,\infty)\to\bdr$ be any solution
of {\rm (\ref{main})} with $0\leq \te\leq \pi/2$ on $(0,\infty)$ and
such that \be\int_0^y\sin\te(w)\,dw>0\quad\text{for all
}y\in(0,\infty).\label{qua}\ee Then $\te$ satisfies {\rm
(\ref{asda})}.

\end{proposition}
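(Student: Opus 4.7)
The plan is to argue by contradiction: assume $\inf_{x\in(0,\infty)}\te(x)=0$, and derive a contradiction with either \eqref{main} or \eqref{qua}. Two preliminary facts are needed. First, $\te$ is continuous on $(0,\infty)$ by a standard dominated-convergence argument applied to \eqref{main}, using that the kernel $\log|(x+y)/(x-y)|$ is locally integrable in $y$ with at most a logarithmic singularity at $y=x$, and is continuous in $x$ away from $y=x$. Second, $\te(x)>0$ for every $x\in(0,\infty)$: if $\te(x_0)=0$, then the non-negativity of the kernel $\log|(x_0+y)/(x_0-y)|$ (which is strictly positive for $y\ne x_0$) together with the non-negativity of the factor $\sin\te(y)/F(y)$ coming from $0\le\te\le\pi/2$, forces $\sin\te\equiv 0$ a.e., hence $F\equiv 0$, contradicting \eqref{qua}. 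Combined with continuity, this implies $\te$ is bounded below by a positive constant on every compact sub-interval of $(0,\infty)$, so a sequence $\{x_n\}$ realising the infimum must satisfy $x_n\to 0$ or $x_n\to\infty$.

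The equation \eqref{main} is invariant under the rescaling $\te(\cdot)\mapsto\te(\lambda\cdot)$ (check by substituting $y=\lambda z$), so replacing $\te$ by $\te_n(x):=\te(x_n x)$ produces a sequence of solutions of \eqref{main}, each satisfying $0\le\te_n\le\pi/2$ and \eqref{qua}, with $\te_n(1)\to 0$. To exploit this, I would integrate \eqref{main} by parts, using $\frac{d}{dy}\log|(x+y)/(x-y)|=2x/(x^2-y^2)$ and the vanishing of the boundary terms at $y=0$ and $y=\infty$ (which follows from $\log|(x+y)/(x-y)|=O(y)$ at $0$ and $O(1/y)$ at $\infty$, together with $\log F$ growing at most logarithmically at either endpoint), and then split the resulting principal-value integral into $(0,x)$ and $(x,\infty)$ and substitute $y=xv$, respectively $y=x/v$, to obtain the equivalent identity
\[
3\pi\te(x)\;=\;2\int_0^1\frac{\log\bigl(F(x/v)/F(xv)\bigr)}{1-v^2}\,dv.
\]
The integrand is pointwise non-negative by the monotonicity of $F$. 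Applied to $\te_n$ at $x=1$, the identity and $\te_n(1)\to 0$ yield $F_n(1/v)/F_n(v)\to 1$ in the $L^1$ norm with weight $(1-v^2)^{-1}$, i.e.\ the measure $\sin\te_n(w)\,dw$ becomes asymptotically flat on every geometric scale about $y=1$.

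The hardest step is to convert this flatness into a genuine contradiction, which I plan to do by extracting a non-trivial rescaled limit. Since $\{F_n\}$ is uniformly $1$-Lipschitz, non-decreasing and bounded above by the identity, Helly's theorem yields a subsequence converging locally uniformly on $(0,\infty)$ to a limit which, by the ratio information above, must be constant; the bound $F_n(y)\le y$ then forces this constant to be $0$, so $\sin\te_n\to 0$ in $L^1_{\loc}(0,\infty)$. This degenerate limit is consistent with \eqref{main} on its own, so a second normalisation is required: I would set $G_n:=F_n/F_n(1)$, so that $G_n(1)=1$ and, by the same ratio analysis, $G_n\to 1$ locally uniformly on $(0,\infty)$. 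Since $G_n(0^+)=0$ for every $n$, the mass of $G_n'=\sin\te_n/F_n(1)$ concentrates in a shrinking neighbourhood of the origin, and the non-triviality \eqref{qua} in its rescaled form is what distinguishes this scenario from the trivial one. The crux of the proof, and the main obstacle, is to exploit this concentration together with \eqref{main} to show that the normalised profile must asymptotically resemble the Stokes corner $G_n(y)\sim y\sin(\pi/6)/F_n(1)$ near $y=0$; a quantitative comparison with this self-similar ansatz then yields a positive lower bound for $\te_n(1)$, contradicting $\te_n(1)\to 0$. Making this quantitative likely requires a further blow-up of $G_n$ at $y=0$ in which the equation \eqref{main} passes to a limit that, by the pointwise positivity observation applied to the limit, is incompatible with vanishing at $y=1$.
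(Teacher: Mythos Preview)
Your argument is incomplete: you explicitly flag ``the crux of the proof, and the main obstacle'' as unresolved, and the final paragraph is a sketch of a further blow-up that you have not carried out. Everything up to the Helly compactness step is plausible, but from that point on you are only describing what you hope will happen. In particular, the step where ``a quantitative comparison with this self-similar ansatz then yields a positive lower bound for $\te_n(1)$'' is precisely the content of the proposition you are trying to prove, so as written the argument is circular or at best a reduction to an equally hard statement.

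More importantly, the whole compactness/blow-up strategy is vastly more elaborate than what is needed. The paper obtains the result by a short chain of elementary inequalities, with no limiting procedure at all. Write $F(y):=\int_0^y\sin\te(w)\,dw$. Since $F(y)\le y$ and the kernel is nonnegative, restricting the integral in \eqref{main} to $(0,x)$ gives
\[
\te(x)\ \ge\ \frac{1}{3\pi}\int_0^x\log\left|\frac{x+y}{x-y}\right|\frac{\sin\te(y)}{y}\,dy.
\]
On $(0,x)$ one has $\log\bigl|\tfrac{x+y}{x-y}\bigr|\ge 2y/x$, whence $\te(x)\ge\frac{2}{3\pi x}F(x)$. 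Using $\sin\te\ge\frac{2}{\pi}\te$ on $[0,\pi/2]$ then gives $\sin\te(y)\ge\frac{4}{3\pi^2}\,F(y)/y$, i.e.\ a pointwise lower bound on $\sin\te(y)/F(y)$. Feeding this back into \eqref{main} over the full range $(0,\infty)$ yields
\[
\te(x)\ \ge\ \frac{4}{9\pi^3}\int_0^\infty\log\left|\frac{x+y}{x-y}\right|\frac{dy}{y}\ =\ \frac{2}{9\pi},
\]
an explicit uniform lower bound. No continuity, no rescaling, no compactness is required; the hypothesis \eqref{qua} is used only to ensure the integrand in \eqref{main} is well defined and nonnegative.
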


\begin{proof}[Proof of Proposition \ref{vsq}]
 It is obvious that
\[\te(x)\geq\frac{1}{3\pi}\int_0^x
\log\left|\frac{x+y}{x-y}\right|\frac{1}{y}\,{\sin\te(y)}\,dy\quad\text{for
all }x\in(0,\infty).\] Since for every $x,y\in(0,\infty)$ with
$0<y<x$, the following inequality holds:
\[\log\left|\frac{x+y}{x-y}\right|\geq 2\frac{y}{x},\]
it follows that
\[\te(x)\geq\frac{2}{3\pi}\frac{1}{x}\int_0^x
\,{\sin\te(y)}\,dy\quad\text{for all }x\in(0,\infty).\] From this it
is immediate that
\[\sin\te(y)\geq\frac{4}{3\pi^2}\frac{1}{y}\int_0^y
\,{\sin\te(w)}\,dw\quad\text{for all }y\in(0,\infty).\] We now
deduce from (\ref{main}) that
\[\te(x)\geq\frac{4}{9\pi^3}\int_0^\infty
\log\left|\frac{x+y}{x-y}\right|\,\frac{1}{y}\,dy=\frac{2}{9\pi}\quad\text{for
all }x\in(0,\infty),\] which proves (\ref{asda}).
\end{proof}

Since (\ref{klm}) and (\ref{teb}) hold, it follows from Theorem
\ref{uint} and Proposition \ref{vsq} that, for any symmetric
nontrivial solution $(\wids,\tips)$ of (\ref{blow}), the function
$\te$ associated to it necessarily coincides with $\te^*$, the
constant function $\pi/6$. It is then straightforward that
$(\wids,\tips)$ coincides with $(\wids^*,\tips^*)$ given by
(\ref{etas})-(\ref{fisa}). This completes the proof of Theorem
\ref{uniq}.
\end{proof}

\begin{proof}[Proof of Proposition \ref{sdf}] We use the following particular case of
a result of Oddson \cite{Od}.

\begin{proposition}\label{odo} Let $r_0>0$ and $\mu>1$. Let \[\mcg:=\{re^{it}:0<r<r_0,
|t|<{\pi}/{(2\mu)}\}.\] Let $w\in C^2(\mcg)\cap C(\overline\mcg)$ be
a superharmonic function in $\mcg$, such that $w(0,0)=0$ and $w>0$
in $\overline\mcg\setminus\{(0,0)\}$. Then there exists $\kappa>0$
such that
\[w (re^{it})\geq \kappa r^\mu\cos\mu t\quad\text{in }\overline\mcg.\]
\end{proposition}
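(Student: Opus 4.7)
The plan is to construct an explicit harmonic barrier from below and apply the maximum principle. The natural choice is
\[ v(r,t) := \kappa\, r^{\mu}\cos(\mu t), \]
which is harmonic in $\mcg$ (being the real part of $\kappa z^{\mu}$ for a suitable branch), continuous on $\overline\mcg$, and identically zero on the two straight rays $\{t=\pm\pi/(2\mu)\}$ by the very choice of opening half-angle. The goal is to choose $\kappa>0$ small enough so that $v\leq w$ on the whole boundary $\partial\mcg$, and then conclude $v\leq w$ in $\overline\mcg$ from the fact that $v-w$ is subharmonic.

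First I would verify the boundary inequality piece by piece. On the two radial sides $t=\pm\pi/(2\mu)$ one has $\cos(\mu t)=0$, hence $v\equiv 0\leq w$ there (for any $\kappa>0$), since $w\geq 0$ on $\overline\mcg$ by hypothesis. At the vertex one has $v(0,0)=0=w(0,0)$. The only remaining piece of $\partial\mcg$ is the circular arc $\{r=r_{0},\,|t|\leq\pi/(2\mu)\}$, which is compact; and on this arc $w$ is continuous and strictly positive, so
\[ m:=\min_{|t|\leq\pi/(2\mu)} w(r_{0}e^{it})>0. \]
Choosing $\kappa:=m/r_{0}^{\mu}$ and using $\cos(\mu t)\leq 1$ gives $v(r_{0}e^{it})\leq\kappa\, r_{0}^{\mu}=m\leq w(r_{0}e^{it})$ on the arc. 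Thus $v\leq w$ on all of $\partial\mcg$.

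To finish, observe that $\Delta(v-w)=-\Delta w\geq 0$ in $\mcg$, since $v$ is harmonic and $w$ is superharmonic; so $v-w\in C^{2}(\mcg)\cap C(\overline\mcg)$ is subharmonic on a bounded domain. The classical weak maximum principle for subharmonic functions then gives
\[ \sup_{\overline\mcg}(v-w)=\max_{\partial\mcg}(v-w)\leq 0, \]
which is the required inequality $w(re^{it})\geq\kappa\, r^{\mu}\cos(\mu t)$ in $\overline\mcg$.

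There is no genuine obstacle here: the only mildly delicate point is ensuring a positive lower bound for $w$ away from the vertex, which is handled by compactness of the arc $\{r=r_{0}\}\cap\overline\mcg$ together with the hypothesis $w>0$ on $\overline\mcg\setminus\{(0,0)\}$. Continuity of $w$ at the origin makes the barrier pinch down to zero at the right rate, and the opening angle $\pi/\mu$ matches exactly the nodal structure of the harmonic polynomial $\Re(z^{\mu})$, which is why the construction succeeds precisely under the stated hypothesis $\mu>1$.
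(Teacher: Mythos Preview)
Your barrier argument is correct and complete: the harmonic comparison function $v=\kappa r^{\mu}\cos(\mu t)$ vanishes on the lateral sides of the sector, is dominated by $w$ on the outer arc for $\kappa=m/r_0^{\mu}$, and the weak maximum principle for the subharmonic function $v-w$ does the rest. The paper does not actually prove this proposition; it simply quotes it as a particular case of a result of Oddson \cite{Od}, so your self-contained barrier proof is a welcome addition rather than a deviation. (One minor remark: your closing comment that the construction ``succeeds precisely under the stated hypothesis $\mu>1$'' overstates things slightly---the barrier argument itself works for any $\mu>0$ for which the sector is well defined; the condition $\mu>1$ matters only for the application in Proposition~\ref{sdf}, not for the barrier estimate.)
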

 Suppose for a
contradiction that $\Om$ contains such a truncated cone.
 Then there exist
$r_0>0$ and $\alpha_1$, $\alpha_2$ with $-\pi\leq
\alpha_1<\alpha_2\leq 0$ and $\alpha_2-\alpha_1>2\pi/3$, such that
$\overline\mcg\setminus\{(0,0)\}\subset\Om_0$, where
$\mcg:=\{re^{it}:0<r<r_0, \alpha_1<t<\alpha_2\}$ and
$\Om_0:=\{(X,Y)\in\Om:0<\psi(X,Y)<\delta\}$. Since $\psi$ is
superharmonic in $\mcg$,
 $\psi(0,0)=0$ and $\psi>0$ in $\overline\mcg\setminus\{(0,0)\}$,
 Proposition \ref{odo}
shows that there exists $\kappa_0>0$ such that
\[\psi(0,Y)\geq \kappa_0|Y|^\mu\quad\text{for all $Y\in (-r_0,0)$},\]
where $\mu:=\pi/(\alpha_2-\alpha_1)$, so that $\mu<3/2$. But this
contradicts the estimate, see (\ref{lipq}),
\[|\nabla\psi(0,Y)|^2\leq K|Y|\quad\text{for all $Y$ such that $(0,Y)\in\Om$,}\]
which is a consequence of the assumption $T[\psi]\leq 0$. This
completes the proof of Proposition \ref{sdf}.
\end{proof}

\begin{proof}[Proof of Theorem \ref{sto}]
Let $\mathcal{Q}$ be given by (\ref{liq}).
 Obviously, $\mathcal{Q}$ is a closed subinterval of $[-\infty, 0]$. Since $\mcs$ and $\psi$ are symmetric,
it is immediate from Theorem \ref{tblow} and Theorem \ref{uniq} that
$\mathcal{Q}$ is a subset of $\{0,-1/\sqrt{3}\}$. Hence either
$\mathcal{Q}=\{0\}$ or $\mathcal{Q}=\{-1/\sqrt{3}\}$. When
$\gamma(r)\geq 0$ for all $r\in[0,\delta]$, the possibility that
$\mathcal{Q}=\{0\}$ is ruled out by Proposition \ref{sdf}. This
completes the proof of Theorem \ref{sto}.
\end{proof}

\begin{proof}[Proof of Theorem \ref{asco}] Suppose first that $q_+\neq\infty$
and $q_-\neq\infty$.
 Let $(\wids,\tips)$
be the solution of (\ref{blow}) whose existence is given by Theorem
\ref{tblow}. Moreover, the proof of Theorem \ref{tblow} shows that
necessarily $\wids=\{(X,\tilde\eta(X)):X\in\bdr\}$, where
\[\tilde\eta(X):=\left\{\begin{aligned}&-q_+|X|\quad\text{for all }X\in
[0,\infty),
\\&-q_-|X|\quad\text{for all }X\in(-\infty, 0].\end{aligned}\right.\]
We now ask for what values of $q_\pm$ there exist solutions $\tips$
of (\ref{ha})-(\ref{aqw}) in the domain $\wido$ below the curve
$\wids$ described above. It is easy to see that, if
$\alpha_\pm:=\arctan q_\pm$, then the only solutions of
(\ref{ha})-(\ref{mh}) are given, for all $(X,Y)\in\wido$, by
\[ \tips(X,Y):=\beta\, \Im\left[i\left(ie^{\ds
i(\alpha_+-\alpha_-)/2}Z\right)^{\ds{\pi}/({\pi-(\alpha_++\alpha_-)})}\right],
\]
where $Z=X+iY$ and $\beta\geq 0$. It is straightforward to check
that, apart from the cases when either $q_\pm=0$ or
$q_\pm=\frac{1}{\sqrt 3}$, none of the above functions $\tips$
satisfies (\ref{aqw}). When $q_\pm=0$, the  only solution of
(\ref{aqw}) of the above type is $\tips_0:\equiv 0$ in $\wido$. When
$q_\pm=\frac{1}{\sqrt{3}}$, the only solution of (\ref{aqw}) of the
above type is the function $\tips^*$ given by (\ref{fisa}).

If $q_+\neq\infty$ and $q_-=\infty$ then, for the solution $(\wids,
\tips)$ of (\ref{blow}) given by Theorem \ref{tblow}, $\wids$
necessarily consists of the negative imaginary axis and the
half-line $\{(X,-q_+X):X\geq 0\}$. Arguing as before, a
contradiction is reached. A similar argument shows that it is also
not possible that $q_+=\infty$ and $q_-\neq\infty$.

The possibility that $q_\pm=\infty$ is ruled out by the argument
used to show that $\sigma=0$ in the proof of Theorem \ref{tblow}.

We conclude that necessarily either $q_\pm=\frac{1}{\sqrt{3}}$ or
$q_\pm=0$. When $\gamma(r)\geq 0$ for all $r\in[0,\delta]$, the
possibility that $q_\pm=0$ is ruled out by Proposition \ref{sdf}.
This completes the proof of Theorem \ref{asco}.
\end{proof}

\section{Appendix}

We recall the definition of a non-tangential limit and some notions
and results concerning the classical Hardy spaces of harmonic
functions. More details can be found in \cite{Du,Ko, Ru}. In what
follows, $\mcd$ denotes the unit disc in the plane and
$\mcd_\pm:=\mcd\cap\bdr^2_\pm$.

Let $\mcg$ be an open set in the plane. Let $(X_0, Y_0)\in\partial
\mcg$ be such that there exist an open set $\mcu$ containing $(X_0,
Y_0)$ and a homeomorphism $h:\mcd\to \mcu$  such that $h(\mcd_+)=
\mcg\cap \mcu$, $h((-1,1)\times\{0\})=\partial \mcg\cap \mcu$
 and the curve $\partial \mcg\cap \mcu$
has a tangent at $(X_0,Y_0)$. Let $\mathbf{n}$ be the unit inner
normal to $\mcg$ at $(X_0,Y_0)$. We say that a sequence $\{(X_n,
Y_n)\}\indn$ of points in $\mcg$ \emph{tends to $(X_0, Y_0)$
non-tangentially} if $(X_n, Y_n)\to (X_0,Y_0)$ as $n\to\infty$ and
there exists $\kappa>0$ such that
\[(X_n-X_0, Y_n-Y_0)\cdot\mathbf{n}\geq \kappa[(X_n-X_0)^2+(Y_n-Y_0)^2]^{1/2}
\quad\text{for all }n\geq 1,\] where $\cdot$ denotes the usual inner
product in $\bdr^2$.
 Let $f:\mcg\to\bdc$
and $l\in\bdc$. We say that $f$ \emph{has non-tangential limit $l$
at $(X_0,Y_0)$} if $\lim_{n\to\infty}f(X_n, Y_n)=l$ for every
sequence $\{(X_n, Y_n)\}\indn$  which tends to $(X_0, Y_0)$
non-tangentially.

 For $p\in[1,\infty)$, the
Hardy space $h^p_\bdc(\mcd)$ is usually defined as the class of
harmonic functions $f:\mcd\to\bdc$  with the property that
\be\sup_{r\in
(0,1)}\int_{-\pi}^\pi|f(re^{it})|^p\,dt<+\infty.\label{hagr}\ee The
Hardy space $h^\infty_\bdc(\mcd)$ is the class of bounded harmonic
functions in $\mcd$. For $p\in[1,\infty]$, the Hardy space
$\ha^p(\mcd)$ is the class of holomorphic functions in $\h^p(\mcd)$.
Fatou's Theorem states that any function in $h_\bdc^p(\mcd)$, $p\in
[1,\infty]$, has non-tangential limits almost everywhere on the unit
circle. The boundary values of any function in $H^1_\bdc(\mcd)$
cannot vanish on a set of positive measure unless the function is
identically $0$ in $\mcd$. The M. Riesz Theorem \cite[Theorem
4.1]{Du} states that, if $u\in \h^p(\mcd)$ for some $p\in
(1,\infty)$, and if $v$ is a harmonic function such that $u+iv$ is
holomorphic, then $v\in \h^p(\mcd)$. For any function
$f:\mcd\to\bdc$, the \emph{radial maximal function}
$M_{\textnormal{rad}}[f]$ is defined \cite[Definition 11.19]{Ru} by
\[M_{\textnormal{rad}}[f]:=\sup_{r\in[0,1)}|f(re^{it})|\quad\text{for all }t\in\bdr.\]
 If $f\in \ha^p(\mcd)$, where $p\in[1,\infty)$ then \cite[Theorem
 7.11]{Ru} shows that
$M_{\textnormal{rad}}[f]\in L^p_{2\pi}$, the space of
$2\pi$-periodic functions in $L^p_{\textnormal{\loc}}(\bdr)$.

 The definition of Hardy spaces in general
domains \cite[Ch.\ 10]{Du} is based on the fact that, for $p\in [1,
\infty)$, a harmonic function $f$ belongs to $h_\bdc^p(\mcd)$ if and
only if the subharmonic function $|f|^p$ has a harmonic majorant,
i.e. there exists a positive harmonic function $w$ in $\mcd$ such
that $|f|^p\leq w$ in $\mcd$.  Let $\mcg$ be an open set. For $p\in
[1,\infty)$, the space $\h^p(\mcg)$ is  the class of harmonic
functions $f:\mcg\to\bdc$ for which the subharmonic function $|f|^p$
has a harmonic majorant in $\mcg$. The Hardy space $\h^\infty(\mcg)$
is the class of bounded harmonic functions in $\mcg$. The spaces
$\ha^p(\mcg)$ consists of the holomorphic functions in $\h^p(\mcg)$,
for $p\in[1,\infty]$. It is easy to check that the Hardy spaces are
conformally invariant: if $\mcg_1$ and $\mcg_2$ are open sets, and
$\sigma:\mcg_1\to \mcg_2$ is a conformal mapping, then
$f\in\h^p(\mcg_2)$ if and only if $f\circ\sigma\in \h^p(\mcg_1)$,
where $p\in [1,\infty]$. For this reason, many properties of the
Hardy spaces of the disc extend by conformal mapping to Hardy spaces
of simply connected domains.
 If $\mcg$ is a bounded open set whose
boundary is a rectifiable Jordan curve, then any function in
$\h^p(\mcg)$, where $1\leq p\leq\infty$, has non-tangential boundary
values $\mch^1$-almost everywhere. A consequence of this is the
existence of non-tangential boundary values $\mch^1$-almost
everywhere for functions in $\h^p(\mcg)$, $1\leq p\leq\infty$, for
any open set $\mcg$ with the following property: for any $(X_0,
Y_0)\in\partial \mcg$ there exist an open set $\mcu$ containing
$(X_0, Y_0)$ and a homeomorphism $h:\mcd\to \mcu$ such that
$h(\mcd_+)=\mcg\cap \mcu$, $h((-1,1)\times\{0\})=\partial \mcg\cap
\mcu$ and the curve $\partial \mcg\cap \mcu$ is rectifiable.  If
$\mcg$ is a bounded open set whose boundary is a rectifiable Jordan
curve, then the non-tangential boundary values of any function in
$\ha^1(\mcg)$ cannot vanish on a set of positive $\mch^1$ measure
unless the function is identically $0$ in $\mcg$.

\end{document}